\documentclass[11pt]{amsart}
\usepackage[margin=3.2cm]{geometry}
\usepackage{latexsym}
\usepackage{amsmath}
\usepackage{amssymb}
\usepackage{amsthm}
\usepackage{enumerate}
\usepackage{enumitem}
\newcommand{\RN}[1]{%
  \textup{\uppercase\expandafter{\romannumeral#1}}%
}
\newcommand{\rn}[1]{%
  \textup{\lowercase\expandafter{\romannumeral#1}}%
}

\newtheorem{theorem}{Theorem}[section]
\newtheorem*{cj1*}{Conjecture A}
\newtheorem*{cj2*}{Conjecture B}
\newtheorem*{cj3*}{Conjecture C}
\newtheorem*{cj4*}{Conjecture D}
\newtheorem*{cj5*}{Conjecture E}

\newtheorem{lemma}[theorem]{Lemma}
\newtheorem{conjecture}[theorem]{Conjecture}
\newtheorem{corollary}[theorem]{Corollary}
\newtheorem{question}[theorem]{Question}

\newtheorem{definition}[theorem]{Definition}

\newtheorem{rmk}[theorem]{Remark}

\newtheorem{example}[theorem]{Example}

\usepackage{todonotes}
\usepackage{hyperref}
\usepackage{enumitem}
\makeatletter
\newcommand*{\rom}[1]{\expandafter\@slowromancap\romannumeral #1@}
\newcommand{\upperRomannumeral}[1]{\uppercase\expandafter{\romannumeral#1}}

 \newcommand{\Df}{\text{Diff}_+}
  \newcommand{\Dff}{\text{Diff}_0}
    \newcommand{\Dfh}{\text{Diff}_h}
\newcommand{\ZZ}{\mathbb{Z}}

\newcommand{\CC}{\mathbb{C}}
\newcommand{\RR}{\mathbb{R}}
\newcommand{\PP}{\mathbb{P}}

\newcommand{\cS}{\mathcal{S}}

\newcommand{\dd}{\delta}

\title{The spaces of K\"ahler and holomorphically tamed symplectic forms on closed 4-manifolds}
\author{Tian-Jun Li and Shengzhen Ning}
\dedicatory{In memory of Professor Dietmar Salamon}
\AtEndDocument{\bigskip{\footnotesize
		\textsc{School of Mathematics, University of Minnesota, Minneapolis, MN, US} \par
		\textit{E-mail address}: \texttt{tjli@math.umn.edu} \par
		\addvspace{\medskipamount}
		\textsc{School of Mathematics, University of Minnesota, Minneapolis, MN, US} \par
		\textit{E-mail address}: \texttt{ning0040@umn.edu} \par
		
}}
\date{}

\begin{document}
\begin{abstract}
This paper investigates the uniqueness, connectedness and openness properties of spaces of K\"ahler forms on closed $4$-manifolds, extending discussions in \cite{Li08,Salamonsurvey}. Motivated by the Streets--Tian conjecture concerning the existence of K\"ahler metrics on Hermitian-symplectic complex manifolds, we also study holomorphically tamed symplectic forms, which are symplectic forms tamed by some integrable complex structure. We formulate a parallel question and relate it to the corresponding questions for K\"ahler-type symplectic forms.
\end{abstract}
\maketitle

\tableofcontents

\section{Introduction}

\subsection{A symplectic version of Streets-Tian conjecture}
In the context of complex differential geometry, a \emph{Hermitian-symplectic metric} on a complex manifold $(X,J)$ refers to a Hermitian metric $g$ whose associated fundamental $2$-form $g(J\cdot,\cdot)$ is the $(1,1)$-part of a symplectic form $\omega$. A complex manifold is said to be \emph{Hermitian-symplectic} if it admits a Hermitian-symplectic metric. This notion, which plays an important role in the study of curvature flows, was introduced in \cite{ST10} together with the Streets--Tian conjecture.

\begin{conjecture}[Streets--Tian]\label{conj:ST}
Any closed Hermitian-symplectic complex manifold $(X,J)$ admits a K\"ahler metric.
\end{conjecture}

 The conjecture was known to be true in the case of complex surfaces (\cite{ST10,LZ09}). For higher dimensions, it has since been extensively studied and verified in many cases. We refer to \cite{Zheng1,Zheng2,CaoZheng} for recent progress and an overview of the current status of the conjecture. In real dimension $4$, Donaldson's tame-to-compatible question
\cite{Donaldson} can be viewed as the almost K\"ahler version of the above conjecture: $J$ is only assumed to be an almost complex structure which is not necessarily integrable.
    
From the viewpoint of symplectic geometry, a Hermitian-symplectic complex manifold can be equivalently thought of as a symplectic manifold $(X,\omega)$ equipped with a tamed complex structure $J$ which means that $\omega(v,Jv)>0$ for any $v\neq 0$. In this paper, we investigate the following notions for symplectic forms.

\begin{definition}
    A symplectic form $\omega$ on $X$ is called \textbf{holomorphically tamed} (resp. {\bf of K\"ahler type}) if there exists a complex structure $J$ on $X$ which is tamed (resp. compatible) with $\omega$.
\end{definition}

 Parallel to Streets--Tian's conjecture \ref{conj:ST}, one could naively ask the analogous question of whether any holomorphically tamed symplectic form must also be of K\"ahler type, by fixing the symplectic structure instead of the complex structure.   Unfortunately, the answer to this question is negative even at the cohomological level. Let us recall some notions introduced in \cite{LZ09}. Given any almost complex manifold $(X,J)$, its \emph{$J$-tamed symplectic cone} and \emph{$J$-compatible symplectic cone} are the regions in $H^2(X;\RR)$ defined as
\begin{align*}
    \mathcal{K}_J^t(X)=\{[\omega]\in H^2(X;\RR)\,|\,\omega \text{ is a symplectic form tamed by }J\},\\
\mathcal{K}_J^c(X)=\{[\omega]\in H^2(X;\RR)\,|\,\omega \text{ is a symplectic form compatible with }J\}.
\end{align*}
The \emph{integrable tamed cone} $\mathcal{K}^t_{\text{Int}}(X)$ and \emph{integrable compatible cone} $\mathcal{K}^c_{\text{Int}}(X)$ are defined to be the union of $\mathcal{K}_J^t(X)$ and $\mathcal{K}_J^c(X)$ respectively, over all integrable $J$.

\begin{example}\label{example:Tedi}
Dr\u{a}ghici observed in \cite{Tedi} that a minimal K\"ahler surface of
general type $(X,J,\omega)$ with $b_2^+(X)>1$, or equivalently
$p_g(X)>0$, admits holomorphically tamed symplectic forms that are not of
K\"ahler type.
Choose a nonzero holomorphic $(2,0)$-form $\phi$ and define
\(
\omega_t:=\omega+t\operatorname{Re}(\phi).
\)
One can check
\(
\operatorname{Re}(\phi)(v,Jv)=0
\)
for every tangent vector $v$ so that $\omega_t$ tames $J$ for every
$t\in\RR$.
On the other hand,
\(
K_J\cdot[\omega_t]=K_J\cdot[\omega]
\)
is independent of $t$, whereas $[\omega_t]^2$ tends to infinity as
$|t|\to\infty$. The Hodge index theorem therefore implies that, for
$|t|$ sufficiently large, the classes $K_J$ and $[\omega_t]$ cannot
both be of type $(1,1)$ with respect to the same complex structure.
By \cite[Corollary~7.4.3]{Morganbook}, the canonical class of any complex
structure $J'$ on $X$ satisfies
\(
K_{J'}=\pm K_J.
\)
Consequently, for $|t|$ sufficiently large,
\(
[\omega_t]\notin H_{J'}^{1,1}(X;\RR)
\)
for every complex structure $J'$ on $X$. Hence $\omega_t$ cannot be of
K\"ahler type, although it is tamed by $J$.
\end{example}

Nevertheless, there are still many cases in which the integrable compatible and tamed cones are known to coincide. For example, when $b_2^+(X)=1$, the equality
\(
\mathcal{K}_{\operatorname{Int}}^c(X)
=
\mathcal{K}_{\operatorname{Int}}^t(X)
\)
was known in \cite{LZ09} (Theorem \ref{thm:LZb1}). The same equality for $X=K3$ and
$X=\mathbb{T}^4$ was observed in \cite{Li08}.
These considerations motivate us to formulate the following question.

\begin{question}[Symplectic version of Streets--Tian conjecture]
\label{question:sympST}
Let $\omega$ be a holomorphically tamed symplectic form on a closed manifold $X$ such that
\(
[\omega]\in\mathcal{K}_{\operatorname{Int}}^c(X).
\)
Does there exist a complex structure $J$ on $X$ that is compatible with
$\omega$?
\end{question}

\subsection{Space of symplectic forms}\label{section:spacesymp}

Let $X$ be a closed oriented $4$-manifold. Denote by
\(
\mathcal S(X)\subseteq Z(X)
\)
the spaces of symplectic forms and closed $2$-forms
on $X$, respectively, both equipped with the $C^\infty$-topology. The
topology of $\mathcal S(X)$ provides an interesting invariant of the
underlying manifold $X$. It was shown in
\cite[Lemma~2.2]{Li08} that $\mathcal S(X)$ is an open and locally
convex subset of $Z(X)$.

Let $\Df(X)$ be the group of orientation-preserving diffeomorphisms of
$X$, and let $\Dff(X)$ denote its identity component. The mapping class
group of $X$ is
\[
\Gamma(X):=\Df(X)/\Dff(X).
\]
Let $\Dfh(X)\subseteq\Df(X)$ be the subgroup consisting of
diffeomorphisms acting trivially on homology. The Torelli group of $X$
is defined by
\[
\Gamma_h(X):=\Dfh(X)/\Dff(X).
\]
One may consider the quotient
\[
\widetilde{\mathcal M}(X):=\mathcal S(X)/\Dff(X).
\]
The space $\widetilde{\mathcal M}(X)$ and its discrete quotient
\[
\mathcal M(X)
:=
\widetilde{\mathcal M}(X)/\Gamma(X)
=
\mathcal S(X)/\Df(X)
\]
can be viewed as non-Hausdorff ``manifolds'' of dimension
$b_2(X)$; see \cite{FH02,FHH05,Wilsonremark,Li08}. When $X$ is a ruled $4$-manifold, the classification of symplectic forms due to Lalonde--McDuff  \cite{LalondeMcduff96} implies that $\mathcal{M}(X)$ is connected. In general, however,
the topology of $\widetilde{\mathcal M}(X)$ and $\mathcal M(X)$ remains
poorly understood. Even determining their numbers of connected
components is a subtle problem.

One can refine this question by fixing a cohomology class. Let
$a\in H^2(X;\RR)$ be represented by a symplectic form $\omega$. Denote
by
\(
\mathcal S_a(X)
\)
the space of all symplectic forms representing $a$, and by
\(
\mathcal S_\omega(X)\subseteq\mathcal S_a(X)
\)
the connected component containing $\omega$. A fundamental question in
$4$-dimensional symplectic topology
\cite[Problem~2(a)]{MSbook} asks whether
\(
\mathcal S_a(X)=\mathcal S_\omega(X).
\)  
Only very recently did \cite{LWXZ} show that $\mathcal S_a(X)$ can have
infinitely many connected components when $X$ is an $S^2$-bundle over a Riemann surface of
positive genus. By Moser's stability, this is equivalent to the failure of the $\Dff(X)$-action on $\mathcal S_a(X)$ to be transitive, whereas $\Dfh(X)$ acts transitively on $\mathcal S_a(X)$ by \cite{LalondeMcduff96}.

\subsection{Space of K\"ahler forms}

Suppose that $X$ is the underlying oriented smooth $4$-manifold of a
K\"ahler surface. We consider the following subspaces of
$\mathcal S(X)$:
\begin{align*}
    \mathcal{SK}(X)
    &:=
   \{
    \omega\in\mathcal S(X)
    \,|\,
    \omega \text{ is of K\"ahler type}
    \},\\
    \mathcal{ST}(X)
    &:=
    \{
    \omega\in\mathcal S(X)
    \,|\,
    \omega \text{ is holomorphically tamed}
    \}.
\end{align*}
These spaces can be expressed as
\[
\mathcal{SK}(X)
=
\bigcup_{J}\mathcal{SK}_J(X),
\qquad
\mathcal{ST}(X)
=
\bigcup_{J}\mathcal{ST}_J(X),
\]
where
\begin{align*}
    \mathcal{SK}_J(X)
    &:=
    \{
    \omega\in\mathcal S(X)
    \,|\,
    \omega \text{ is compatible with }J
    \},\\
    \mathcal{ST}_J(X)
    &:=
   \{
    \omega\in\mathcal S(X)
    \,|\,
    \omega \text{ tames }J
    \},
\end{align*}
and the union is over the space of all complex structures on $X$.
For a class $a\in H^2(X;\RR)$, define
\[
\mathcal{SK}_a(X)
:=
\{
\omega\in\mathcal{SK}(X)
\,|\,
[\omega]=a
\},
\qquad
\mathcal{ST}_a(X)
:=
\{
\omega\in\mathcal{ST}(X)
\,|\,
[\omega]=a
\}.
\]
%Given $\omega\in\mathcal{SK}_a(X)$, let
%\[
%\mathcal{SK}_\omega(X)\subseteq\mathcal{SK}_a(X)
%\]
%denote the connected component containing $\omega$. Similarly, for
%$\omega\in\mathcal{ST}_a(X)$, let
%\[
%\mathcal{ST}_\omega(X)\subseteq\mathcal{ST}_a(X)
%\]
%denote the connected component containing $\omega$.
In this notation, Question~\ref{question:sympST} asks whether
\(
\mathcal{ST}_a(X)=\mathcal{SK}_a(X)
\)
whenever $\mathcal{SK}_a(X)$ is known to be nonempty.

We also consider the quotient spaces
\[
\mathcal{MT}(X)
:=
\mathcal{ST}(X)/\Df(X),
\qquad
\mathcal{MK}(X)
:=
\mathcal{SK}(X)/\Df(X),
\]
when the cohomology class is not fixed. For a fixed class
$a\in H^2(X;\RR)$, we define
\[
\mathcal{MK}_a(X)
:=
\mathcal{SK}_a(X)/\Dfh(X).
\]
Based on the results reviewed in
Section~\ref{section:spacesymp} about the space of symplectic forms, one may ask the following analogous questions
concerning the space of K\"ahler type forms.

\begin{question}[Uniqueness]\label{question:unique}
    Is $\mathcal{MK}_{a}(X)$ a single point? 
\end{question}

\begin{question}[Connectedness]\label{question:connected}
    Is $\mathcal{MK}(X)$ connected?
\end{question}

\begin{question}[Openness]\label{question:open}
    Is the inclusion $\mathcal{SK}(X)\hookrightarrow \mathcal{S}(X)$ open?
\end{question}

 Questions \ref{question:unique} and \ref{question:connected} are the K\"ahler-type incarnations of the insightful questions raised by Salamon in \cite[Question 2 and 3]{Salamonsurvey}. We aim to verify whether the (non-)uniqueness and deformation properties pursued by him for all symplectic forms can hold after restricting to K\"ahler-type symplectic forms. Recall that the classical Kodaira--Spencer stability theorem \cite{KSstab} asserts the K\"ahler stability of complex structures: a small complex deformation of K\"ahler manifolds remains K\"ahler. In this sense, Question \ref{question:open} may be viewed as the symplectic analogue of Kodaira--Spencer stability asking about the K\"ahler stability of symplectic structures.

\subsection{Main results}\label{section:main results}
We now state the main results of this paper. Let $X$ be a closed,
oriented, smooth $4$-manifold. Concerning
Question~\ref{question:sympST}, we have the following.

\begin{theorem}\label{thm:main1}
For every $a\in H^2(X;\RR)$, the space $\mathcal{SK}_a(X)$ is either
empty or equal to $\mathcal{ST}_a(X)$ under any of the following
assumptions:
\begin{enumerate}
    \item $b_2^+(X)=1$;
    \item $X$ admits a symplectic form of Kodaira dimension zero;
    \item $X$ admits a K\"ahler metric of negative sectional curvature.
\end{enumerate}
\end{theorem}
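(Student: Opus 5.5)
The inclusion $\mathcal{SK}_a(X)\subseteq\mathcal{ST}_a(X)$ is automatic, so the content of Theorem~\ref{thm:main1} is the reverse inclusion. Fix $\omega\in\mathcal{ST}_a(X)$, tamed by an integrable $J$, and suppose $\mathcal{SK}_a(X)\neq\emptyset$. The uniform strategy is in two steps. First, show that the tamed form can be deformed, within its class and among forms tamed by $J$, to a $J$-compatible form. Second, use that a convex path of $J$-tamed forms is a path of symplectic forms, so Moser's argument produces a diffeomorphism isotopic to the identity pulling $\omega$ back to a K\"ahler form for a pulled-back complex structure. For the first step, write $\omega=\omega^{1,1}+\omega^{2,0}+\omega^{0,2}$. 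By \cite{LZ09}, since $J$ is integrable and tames $\omega$, the positive $(1,1)$-part is (after $\partial\bar\partial$-type corrections) a positive current, and there is a Kähler form $\omega_J$ with $[\omega_J]=[\omega]-[\omega^{2,0}+\omega^{0,2}]_{\text{harm}}$. So $a\in\mathcal{K}_J^c$ exactly when the holomorphic $(2,0)$-part of $a$ vanishes. If this holds, $\omega$ and the Kähler representative $\omega_J$ of $a$ are both $J$-tamed. Hence $t\omega+(1-t)\omega_J$ is a path of $J$-tamed, thus symplectic, forms in class $a$, and Moser gives $\omega=\phi^*\omega_J$. Then $\omega$ is compatible with $\phi^*J$, so $\omega\in\mathcal{SK}_a(X)$.

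So in each case the task reduces to showing that $a$ lies in $H^{1,1}_J$, equivalently that the $H^{2,0}$-component of $a$ is zero. Alternatively, it suffices to find another complex structure $J'$ in the same deformation-connected family that tames $\omega$ and for which this holds.

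\textbf{Case (1).} If $b_2^+=1$, this is Theorem~\ref{thm:LZb1}. For a Kähler surface, $p_g=0$, so $H^{2,0}=0$ and every $J$-tamed class is $J$-compatible ($\mathcal K^t_J=\mathcal K^c_J$). The general argument above then applies. If the surface is non-Kähler ($b_1$ odd), note that $J$ tames a symplectic form, so $J$ is Kähler by \cite{LZ09}/\cite{ST10}.

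\textbf{Case (2).} Kodaira dimension zero with a complex structure means $X$ is K3, Enriques, a torus, a hyperelliptic surface, or a Kodaira surface. Kodaira surfaces are excluded because a tamed $J$ forces $b_1$ even, and Enriques and hyperelliptic surfaces have $b^+=1$, so they fall under case (1). For K3 and $T^4$, $H^{2,0}$ is nonzero but $J$ can vary. Given $J$ taming $\omega$, I would use the period map and the surjectivity/Torelli theorems for K3 and tori. The goal is a complex structure $J'$ for which $a$ is of type $(1,1)$ and Kähler, chosen so that $J$ and $J'$ are joined by a path $J_s$ each member of which still tames some form in a path from $\omega$ to a $J'$-Kähler form. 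This is the observation of \cite{Li08}. Concretely, I would use the hyperkähler-type rotation: the span of $\operatorname{Re}\phi$, $\operatorname{Im}\phi$, and a Kähler class is a positive 3-plane. Rotating within this positive 3-plane moves the complex structure so that $a$ becomes $(1,1)$ while keeping $a^2>0$ and the positivity on curves. The main obstacle is controlling $(-2)$-curves: $a$ must pair positively with every effective $(-2)$-class of $J'$. I would handle this by a generic choice of period together with Weyl group reflections, which are realized by diffeomorphisms acting on $H^2$. Since the final statement only asks for membership in $\mathcal{SK}_a$, a diffeomorphism in $\Dfh$ is not even needed. The fact that $\omega$ is cohomologous to a form in $\mathcal{SK}_a$ already gives a Kähler type complex structure for class $a$. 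The remaining issue is isotopy within tamed forms, which I would argue by the convexity of the tame condition together with Moser.

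\textbf{Case (3).} A Kähler surface with negative sectional curvature is rigid, by Siu's strong rigidity. Every complex structure in its deformation class is $\pm J_0$ or its conjugate up to diffeomorphism. Moreover, every $J$ taming some symplectic form on $X$ is biholomorphic or antiholomorphic to $J_0$, again by Siu's result applied to the Kähler (by \cite{LZ09}) surface $(X,J)$, which is homotopy equivalent to $X$. Since $\mathcal{SK}_a\neq\emptyset$, $a$ is $(1,1)$ and Kähler for some $J'=\psi^*(\pm J_0)$. The obstacle is that $J$ itself is $\psi'^*(\pm J_0)$ for a possibly different $\psi'$, so a priori $a$ need not be $(1,1)$ for $J$. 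I would resolve this by the Hodge index argument of Example~\ref{example:Tedi}. This uses the fact that the tamed condition forces $K_J\cdot a=K_{J'}\cdot a$ up to sign (via \cite[Corollary~7.4.3]{Morganbook}) and compares $a^2$. This step is the most delicate part.
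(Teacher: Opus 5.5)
Your case (1) is fine and is exactly the paper's argument. The proposal has two genuine gaps, in cases (2) and (3).

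\emph{Case (2) omits the non-minimal manifolds.} The hypothesis that $X$ admits a symplectic form of Kodaira dimension zero includes $X_k=K3\#k\overline{\mathbb{CP}}^2$ and $\mathbb T^4\#k\overline{\mathbb{CP}}^2$. These have $b_2^+=3$, so, unlike blowups of Enriques and hyperelliptic surfaces, they are not absorbed by case (1), and your list of minimal surfaces never reaches them.

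Your hyperk\"ahler rotation does not transfer to $X_k$. First, $X_k$ is not hyperk\"ahler. Second, along a deformation of the minimal model, $a=\kappa-\sum\lambda_ie_i$ stays tamed only if $Z_\varphi(\kappa)-\sum\lambda_ie_i$ remains K\"ahler on the blowup, and that depends on curves through the blown-up points.

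Here the hypothesis $\mathcal{SK}_a(X)\neq\varnothing$ is essential. $\mathcal K^t_{\mathrm{Int}}(X_k)$ is the whole symplectic cone, while $\mathcal K^c_{\mathrm{Int}}(X_k)$ can be strictly smaller (e.g.\ for $\mathbb T^4\#\overline{\mathbb{CP}}^2$). So ``rotate until $a$ becomes K\"ahler'' cannot work in general.

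The paper instead shows that any two cohomologous holomorphically tamed forms on $X_k$ differ by an element of $\Dfh(X_k)$:
\begin{enumerate}
\item Tameness forces the minimal period into $\Phi_{\kappa,\lambda}$, i.e.\ $Z_\varphi(\kappa)^2>\sum\lambda_i^2$.
\item One perturbs to periods in $\mathring\Phi_{\kappa,\lambda}$, so the minimal model has no curves, and to distinct blowup points. There Nakai--Moishezon keeps $a$ tamed.
\item One connects the two configurations through the blowup families of Section~\ref{section:cpx str on blowup}.
\item One applies Lemma~\ref{lem:isotopic} and Moser against a K\"ahler-type representative of $a$.
\end{enumerate}

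For minimal K3 and $\mathbb T^4$ your twistor rotation is correct; it is the disk-fiber path of Lemma~\ref{lem:Grassmannian}. Your $(-2)$-curve worry is unnecessary. If $J_u$ is the twistor member with K\"ahler class $u$ in the positive $3$-plane $W$, the $(1,1)$-part of $a\in W$ is $(a\cdot u)u$. This is a positive multiple of a K\"ahler class whenever $a\cdot u>0$, so no genericity argument or Weyl reflections are needed.

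\emph{Case (3) does not close.} Unmarked rigidity only gives $J\cong\pm J_0$ through some diffeomorphism. The Hodge index argument of Example~\ref{example:Tedi} cannot then force $a\in H^{1,1}_J$. From $K_{J'}=\pm K_J$ and $a\in H^{1,1}_{J'}$ you only get $a^2K_J^2\le(a\cdot K_J)^2$. This bounds the square of the $H^-_J$-component of $a$ from above but does not make it vanish, which is exactly why Example~\ref{example:Tedi} needs $|t|\to\infty$.

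The missing ingredient is \emph{marked} strong rigidity (Theorem~\ref{thm:rigidity}). Applied to the identity map, it gives an (anti)biholomorphism homotopic to the identity, hence acting trivially on cohomology. So $H^{1,1}_J(X;\RR)=H^{1,1}_{J'}(X;\RR)$ for any two complex structures (Corollary~\ref{cor:rigidity}). Then $a\in H^{1,1}_J$, Theorem~\ref{thm:LZb1}(2) gives $a\in\mathcal K^c_J$, and your interpolation-plus-Moser step finishes.
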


The three cases may be viewed very roughly as three regimes containing K\"ahler surfaces with positive, zero, and negative curvature respectively, and their proofs require different strategies. Item~(1) includes all rational and ruled $4$-manifolds and follows from the comparison between tamed and compatible cones established in \cite{LZ09}. Item~(2) includes K3 surfaces, Enriques surfaces, hyperelliptic surfaces, complex $2$-tori $\mathbb T^4$, and their blowups; its proof uses the classification and Torelli theory of compact K\"ahler surfaces of Kodaira dimension zero \cite{BHPVbook}. Finally, item~(3) includes all compact complex ball quotients \( \Pi\backslash\mathbb B_{\CC}^2\) for some \(\Pi<\operatorname{PU}(2,1), \) whose proof relies on strong rigidity results on negatively curved manifolds \cite{Siurigidity,Zhengrigidity}.

We next turn to Question \ref{question:unique}.
\begin{theorem}\label{thm:main2}
 For every $a\in H^2(X;\RR)$, we have $\#\mathcal{MK}_a(X)\leq 1$ under either of the following assumptions:
 \begin{enumerate}
     \item  $X$ admits a symplectic form of non-positive Kodaira dimension;
     \item $X$ admits a K\"ahler metric of negative sectional curvature.
 
 \end{enumerate}
  Moreover, if $X$ underlies a K\"ahler surface of general type with $b_2^+(X)=1$, then $\#\mathcal{MK}_a(X)$ is finite.
\end{theorem}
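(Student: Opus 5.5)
The strategy is to reduce uniqueness of $\mathcal{MK}_a(X)$ to two facts. First, the space of complex structures relevant to $a$ should be connected modulo $\Dfh(X)$. Second, Kähler forms in a fixed class for a fixed $J$ form a convex set. Concretely, let $\omega_0,\omega_1\in\mathcal{SK}_a(X)$ be compatible with $J_0,J_1$.

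If $J_0$ and $J_1$ can be joined, after pulling back by some $\phi\in\Dfh(X)$, by a path $J_t$ of complex structures with $a\in H^{1,1}_{J_t}$ and $a$ Kähler for every $J_t$, we argue as follows. By Kodaira--Spencer stability together with the $\partial\bar\partial$-lemma, we obtain a smooth family $\omega_t$ of $J_t$-Kähler forms in the class $a$. On each fibre, $\mathcal{SK}_a\cap\mathcal{SK}_{J}$ is convex, so it is connected. Hence $\omega_0$ and $\phi^*\omega_1$ are joined by a path in $\mathcal{S}_a(X)$. Moser's stability theorem then gives $\psi\in\Dff(X)$ with $\psi^*\omega_0=\phi^*\omega_1$, and therefore $[\omega_0]=[\omega_1]$ in $\mathcal{MK}_a(X)$. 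So everything reduces to the connectivity of the ``Kähler-marked'' moduli $\{(J,a): a\in\mathcal{K}^c_J\}/\Dfh(X)$.

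For case (1), non-positive Kodaira dimension, I would split by Kodaira dimension.
\begin{itemize}
\item If $\kappa=-\infty$, $X$ is rational or ruled. All Kähler structures are deformation equivalent, and the Lalonde--McDuff classification already gives $\Dfh(X)$-transitivity on $\mathcal{S}_a(X)$. Since $\mathcal{SK}_a\subseteq\mathcal S_a$ and $\Dfh(X)$ preserves $\mathcal{SK}_a(X)$, it follows directly that $\#\mathcal{MK}_a\le1$. (Here $\mathcal{MK}_a$ is defined as a quotient by $\Dfh$, not by $\Dff$.)
\item If $\kappa=0$, $X$ is a blowup of a K3, Enriques, hyperelliptic or torus surface. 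Here I would use the global Torelli theorem and the surjectivity of the period map to K\"ahler-marked periods, as in \cite{BHPVbook}. The key input is the fact used for Theorem~\ref{thm:main1}(2): the set of marked pairs $(J,\text{Kähler class }a)$ forms a connected space, namely the period domain of pairs $(\Omega,\kappa)$ with $\kappa$ positive, orthogonal to $\Omega$ and not orthogonal to any $(-2)$-class (the $(-1)$-classes in the blown-up case). This space is connected once $a$ is fixed, and we pass to the quotient by monodromy, which lies in $\Dfh$ or is compensated by it.
\end{itemize}

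For case (2), a Kähler metric of negative sectional curvature, I would invoke Siu/Zheng rigidity \cite{Siurigidity,Zhengrigidity}. Any complex structure $J'$ on $X$ is biholomorphic to $J$ or $\bar J$ via a diffeomorphism homotopic to the identity. Such diffeomorphisms act trivially on homology, so $J_1$ can be pulled back to $J_0^{\pm}$ by an element of $\Dfh$. Orientation excludes $\bar J$, since compatibility with a symplectic form of the given orientation fixes the sign. Convexity of Kähler forms in a fixed class for a fixed $J$, followed by Moser's theorem, then finishes the argument.

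For the general type case with $b_2^+=1$, I would argue finiteness instead of uniqueness. The complex structures on $X$ form finitely many deformation families, by the finiteness of the moduli of surfaces of general type with fixed invariants. Within one family, the argument above shows that each connected component of the marked Kähler-class space contributes one point. The possibly nontrivial monodromy action on $H^2$ might then produce several, but finitely many, orbits meeting $a$; finiteness of the monodromy image or of the relevant orbit set is the point to check.

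The main obstacle is case (1) with $\kappa=0$. There we must ensure that the marking change produced by Torelli is realized by an element of $\Dfh$ rather than a general diffeomorphism, and that the space of Kähler-marked periods with fixed class $a$ is connected after removing the walls. I would handle this by working within one connected component of marked complex structures and by using the fact that $a$ itself fixes the Kähler chamber.
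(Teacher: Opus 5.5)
Your reduction is sound: pull back by $\Dfh(X)$, find a path $J_t$ along which $a$ stays a K\"ahler class, then use convexity and Moser. Your $\kappa=-\infty$ case matches the paper's appeal to known uniqueness results. For minimal K3 and $\mathbb T^4$, your outline can be completed along the paper's lines: a path in $M_\kappa$ through a period with no $(-2)$-classes, Torelli with the identity marking, and Donaldson's theorem to rule out $-\operatorname{Id}$.

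\textbf{Main gap: the blowups of K3 and $\mathbb T^4$.} This is where the theorem has new content. For $X_k=X\#k\overline{\CC\PP}^2$ you assert, but do not prove, that the locus of complex structures on which $a=\kappa-\sum\lambda_ie_i$ is K\"ahler is connected modulo $\Dfh(X_k)$. Your description of that locus is also not correct.
\begin{itemize}
\item The K\"ahler cone of $(X_k,I)$ is cut out by all negative curves, including proper transforms $c-\sum m_ie_i$ of curves through the centers, not just by $(-2)$- and $(-1)$-classes. No Torelli theorem for $X_k$ is available in those terms.
\item If $\kappa$ is rational, $a$ can only be K\"ahler when the minimal model is projective with $\kappa$ of type $(1,1)$. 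Such a surface has curves through every point, so K\"ahlerness of $a$ on the blowup becomes a Seshadri/Nagata-type question depending on the centers. Saying ``$a$ fixes the K\"ahler chamber'' does not settle it.
\end{itemize}

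\textbf{The missing idea is to relax compatibility to tameness along the path.} Since $\mathcal K^t_I(X_k)=\mathcal K^c_I(X_k)+H^-_I(X_k)$ (Theorem~\ref{thm:LZb1}), it suffices that the $(1,1)$-part $Z_\varphi(\kappa)-\sum\lambda_ie_i$ be K\"ahler. The paper proceeds as follows.
\begin{itemize}
\item It perturbs $I_1,I_2$, keeping them tamed by $\omega_1,\omega_2$, so that the minimal models have periods in $\mathring{\Phi}_{\kappa,\lambda}$ and the centers are distinct. Such periods carry no curves at all, so they lie off $\overline{M}_\kappa$ whenever $\kappa$ is rational.
\item There Nakai--Moishezon applies (Lemma~\ref{lem: union of generic tamed cones}), and the parameter spaces are connected (Lemmas~\ref{lem: mathring Grassmannian} and~\ref{lem: moving blowup point}).
\item Lemma~\ref{lem:isotopic} and Moser then give the $\Dfh(X_k)$-equivalence, through holomorphically tamed but generally non-K\"ahler pairs.
\end{itemize}

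\textbf{The $b_2^+=1$ cases and finiteness.} For Enriques and hyperelliptic surfaces, their blowups, and general type, your approach again needs $a$ to stay K\"ahler along the path, which you do not establish. The paper needs no condition on $a$ along the path here. By Theorem~\ref{thm:deftoiso}, any path of (automatically K\"ahler) complex structures gives a deformation that can be homotoped to an isotopy, so $\#\mathcal{MK}_a(X)\le\#\pi_0(\widetilde{\mathcal I}_a(X))$. For the finiteness statement, the point you leave ``to check'' is the essential input. The image of $\Df(X)\to\operatorname{Aut}(H^2(X;\ZZ))$ is finite for general type with $b_2^+=1$: Seiberg--Witten theory preserves $K$ and the exceptional classes up to sign, and their orthogonal complement is negative definite. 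Together with Theorem~\ref{thm:FM main}, this bounds $\#\pi_0(\widetilde{\mathcal I}(X))$.

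\textbf{Case (2).} Your reason for excluding $f^*J'=-J$ is wrong. In complex dimension two, $J$ and $-J$ induce the same orientation, and $-f^*\omega'$ would be a $J$-K\"ahler form of the correct orientation. The exclusion is cohomological: $f\in\Dfh(X)$ gives $[f^*\omega']=a$, so both $a$ and $-a$ would lie in $\mathcal K^c_J(X)$, which is impossible.
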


When $X$ is a K3 surface, the uniqueness result was proved by Tolman in
\cite[Proposition~2.1]{Tolman} in order to construct her beautiful example of a
$6$-dimensional non-Hamiltonian symplectic circle action with $32$ fixed
points. Entov--Verbitsky \cite{EV16,EV23} also established such
uniqueness for $\mathbb T^{2n}$. The main novelty of item~(1)
above is to extend the uniqueness result to their blowups, as well as Enriques surfaces and hyperelliptic surfaces, through
holomorphically tamed symplectic forms. Indeed, many results concerning the symplectic topology of Calabi--Yau surfaces and their blowups assume that the symplectic form is of
K\"ahler type, or even projective. For results on symplectomorphism
groups, see \cite{SSk3,SmirnovK3,smirnovT4}; for symplectic packings,
see \cite{LMS13,EV16,EV18,EVlong}; and for Lagrangian submanifolds/fibrations and mirror symmetry, see \cite{AST4,SSk3,EV23,CG25,hackingkeating25}.
We expect that Theorem~\ref{thm:main2} combined with Theorem \ref{thm:main1} is useful by weakening the hypothesis on the symplectic form from
being K\"ahler to being merely holomorphically tamed; and once a
symplectic property is established for one holomorphically tamed
symplectic form,
Theorem~\ref{thm:main2} allows it to be transferred to every
cohomologous holomorphically tamed symplectic form. This principle has already been employed in \cite{EV23}, which allows dynamical results about diffeomorphism group actions to be applied to the study of Lagrangian tori.

Let us now address Question \ref{question:connected}.
\begin{theorem}\label{thm:main3}
 The space $\mathcal{MK}(X)$ 
    \begin{enumerate}
    \item has finitely many connected components for any $X$;
    \item is connected, if $X$ admits a symplectic form of non-positive Kodaira dimension;
      \item has at most two connected components, if $X$ admits a K\"ahler metric of negative sectional curvature.
    \end{enumerate}
\end{theorem}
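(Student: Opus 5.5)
The proof strategy is to reduce connectedness of $\mathcal{MK}(X)=\mathcal{SK}(X)/\Df(X)$ to connectedness properties of the space of complex structures on $X$ modulo diffeomorphisms. For a fixed complex structure $J$, the set $\mathcal{SK}_J(X)$ of $J$-compatible Kähler forms is convex: a convex combination of $J$-Kähler forms is again $J$-Kähler. So $\mathcal{SK}_J(X)$ is connected. Next, if $J_t$ is a smooth family of Kähler complex structures, then by Kodaira--Spencer stability \cite{KSstab} any Kähler form $\omega_0$ for $J_0$ extends to a continuous family $\omega_t$ of $J_t$-Kähler forms for $t$ near $0$. Hence the union $\bigcup_t \mathcal{SK}_{J_t}(X)$ is connected. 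Consequently the connected components of $\mathcal{MK}(X)$ are dominated by the deformation classes of Kähler complex structures on $X$ up to $\Df(X)$. Here two complex structures are identified if they are joined by a chain of (small) deformations, possibly after pulling back by an orientation-preserving diffeomorphism.

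For item (1), we must show that the number of deformation types of complex (Kähler) structures on a fixed smooth oriented $4$-manifold is finite. We will use the Enriques--Kodaira classification \cite{BHPVbook}. Each Kähler surface is a blowup of a minimal model, and the number of blowups is determined by $X$ up to finitely many choices.
\begin{itemize}
\item For rational, ruled, Kodaira dimension $0$ and elliptic surfaces, the deformation types with given topology are finite by classification.
\item For general type, minimal models with fixed $K^2$ and $\chi$ form finitely many deformation families. This follows from the boundedness of pluricanonical embeddings (Bombieri), i.e.\ Gieseker's moduli space is of finite type.
\end{itemize}
Since $K_{J}^2$ and $\chi$ are topological for the minimal model (via Seiberg--Witten invariance of minimal models and the number of exceptional classes), finiteness follows. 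We also need the orientation-reversed complex structure issue, which is excluded since we use $\Df$.

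For item (2), if $X$ carries a symplectic form of Kodaira dimension $-\infty$ or $0$, then every Kähler structure on $X$ is a blowup of a rational or ruled surface, respectively of a K3, Enriques, hyperelliptic surface or torus. We must show that all such structures are deformation equivalent up to diffeomorphism. This is classical in each case: Kodaira's theorem that all K3 surfaces are deformation equivalent, and likewise for tori, Enriques surfaces, and each hyperelliptic type (the type being determined by topology). For ruled surfaces over a genus $g$ curve, Hirzebruch-type surfaces of the same parity deform to each other and the parity is topological. The blowups are handled by moving the blown-up points in the connected configuration space. The main obstacle is checking that the diffeomorphism type of $X$ pins down a single deformation family, e.g.\ for hyperelliptic surfaces and non-minimal ruled surfaces. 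For the latter, we will match the distinct rulings by a diffeomorphism acting on homology, and use Lalonde--McDuff \cite{LalondeMcduff96} as a cross-check.

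For item (3), by Siu--Zheng strong rigidity \cite{Siurigidity,Zhengrigidity}, any complex structure on $X$ homotopy equivalent to a negatively curved Kähler surface is biholomorphic to $J$ or $\bar J$ (the conjugate). Hence all Kähler structures on $X$ are, modulo $\Df(X)$, equal to $J$ or $-J$. Note that $-J$ induces the opposite orientation only in real dimension $\equiv 2 \bmod 4$; in real dimension $4$ it preserves orientation. Therefore $\mathcal{SK}(X)$ is, modulo diffeomorphisms, the union of the two convex sets $\mathcal{SK}_J(X)$ and $\mathcal{SK}_{\bar J}(X)$, giving at most two components.
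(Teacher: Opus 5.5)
Your overall architecture matches the paper's. Convexity of each $\mathcal{SK}_J(X)$ together with Kodaira--Spencer stability bounds $\#\pi_0(\mathcal{MK}(X))$ by the number of deformation classes of complex structures modulo $\Df(X)$; this is Lemma~\ref{lemma:uniontamed} and Theorem~\ref{thm:connected}. Two small points are left implicit in your reduction. Passing from the local Kodaira--Spencer extension to connectedness of $\bigcup_t\mathcal{SK}_{J_t}(X)$ needs a compactness or chaining argument. You also need every structure along the path to be K\"ahler, which holds because $b_1(X)$ is even. With that, items (2) and (3) are fine. In (3) you write every K\"ahler $J'$ as $f^*(\pm J)$ with $f$ homotopic to the identity, so $\mathcal{MK}(X)$ is the image of the two convex sets $\pm\mathcal{SK}_J(X)$. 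That is a direct form of the paper's bound $\#\pi_0(\widetilde{\mathcal I}(X))\le 2$.

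The genuine gap is in item (1). The paper simply cites Friedman--Morgan's finiteness theorem \cite[Theorem~S.2]{FMbook} (Theorem~\ref{thm:FM main}). You instead try to derive it from the Enriques--Kodaira classification, and the step ``for elliptic surfaces the deformation types with given topology are finite by classification'' fails. The classification sorts properly elliptic surfaces by $\chi$, $q$, the base and the multiplicities of the multiple fibres, and topology does not bound those multiplicities. For example, the Dolgachev surfaces $D_{2,q}$ ($q\ge 3$ odd) are all homeomorphic to $\mathbb{CP}^2\#9\overline{\mathbb{CP}}^2$, yet they lie in infinitely many deformation classes. Distinguishing them, and controlling the multiple-fibre data for a fixed \emph{diffeomorphism} type, is the gauge-theoretic content (Donaldson or Seiberg--Witten invariants) of the Friedman--Morgan theorem. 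It does not follow from the classification. To repair (1), either invoke that theorem, which applies since $b_1(X)$ is even whenever $\mathcal{MK}(X)\neq\varnothing$, or give an argument that the smooth structure determines the multiple-fibre data up to finitely many choices. Your general-type step, via Gieseker boundedness and Seiberg--Witten invariance of the minimal model's $K^2$, is essentially sound.
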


Fake projective planes (\cite{Mumfordfake}) provide examples for which the bound two in item~(3) is attained. This would follow from the nonexistence of anti-holomorphic involutions established in \cite{KK02}.
The finiteness statements in ~\ref{thm:main3} as well as Theorem ~\ref{thm:main2} (1)
rely on some general constraints from gauge theory. Establishing connectedness
and uniqueness, however, requires effort to have a more precise understanding of the
moduli of complex structures on these manifolds in Section \ref{section:k3}.

Finally, for Question \ref{question:open}, we have:
\begin{theorem}\label{thm:main4}
     $\mathcal{SK}(X)\hookrightarrow\mathcal{S}(X)$ is open under either of the following assumptions:
     \begin{enumerate}
    \item $b_2^+(X)=1$;
    \item $X$ admits a symplectic form of Kodaira dimension zero.
\end{enumerate}
 Moreover, if $b_2^+(X)>1$ and $X$ admits a K\"ahler metric of negative sectional curvature, then  $\mathcal{SK}(X)\hookrightarrow\mathcal{S}(X)$ is not open.
\end{theorem}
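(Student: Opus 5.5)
The plan is to derive openness from Theorem~\ref{thm:main1} together with the openness of tameness. Fix $\omega\in\mathcal{SK}(X)$, compatible with an integrable $J$. Since $\omega(v,Jv)>0$ is an open condition on the unit sphere bundle (which is compact), every $\omega'\in\mathcal S(X)$ that is $C^0$-close to $\omega$ still tames $J$. Thus $\mathcal{ST}_J(X)$ is open in $\mathcal S(X)$, and so is $\mathcal{ST}(X)$. It therefore suffices to show that a neighbourhood of $\omega$ in $\mathcal{ST}(X)$ lies in $\mathcal{SK}(X)$. By Theorem~\ref{thm:main1}, in cases (1) and (2) this reduces to a cohomological statement: for $\omega'$ near $\omega$, the class $[\omega']$ lies in $\mathcal K^c_{\mathrm{Int}}(X)$, i.e. $\mathcal{SK}_{[\omega']}(X)\neq\emptyset$. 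Then $\omega'\in\mathcal{ST}_{[\omega']}(X)=\mathcal{SK}_{[\omega']}(X)$.

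For case (1) I will use \cite{LZ09}, where $\mathcal K^t_J(X)=\mathcal K^c_J(X)$ when $b_2^+=1$ (Theorem~\ref{thm:LZb1}). Indeed $[\omega']\in\mathcal K^t_J(X)$, hence $[\omega']\in\mathcal K^c_J(X)\subseteq\mathcal K^c_{\mathrm{Int}}(X)$, and we are done.

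For case (2) the tamed cone of $J$ is strictly larger when $p_g>0$, so I will vary $J$ instead. The class $[\omega']$ is a small perturbation of the Kähler class $[\omega]$. Let $H^2(X;\RR)=H^{1,1}_J\oplus(H^{2,0}\oplus H^{0,2})_{\RR}$. The $(2,0)$-part of $[\omega']$ is small, so by the local Torelli theorem (unobstructedness of deformations of K3 surfaces and tori, and the analogous statements for Enriques and hyperelliptic surfaces, where $p_g=0$ and case (1)-type reasoning applies) I choose a small deformation $J_s$ of $J$ for which $[\omega']$ is of type $(1,1)$. By Kodaira--Spencer stability, $[\omega']$ then lies near the Kähler class $[\omega]$ deformed along $J_s$, and is therefore Kähler for $J_s$ when it is small. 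For blowups I deform the pair (surface, exceptional curves) and keep the exceptional curves holomorphic. This is possible because the periods of the minimal model can be varied independently and the exceptional curves persist when their classes remain $(1,1)$, which is automatic on the blown-up surface. The main obstacle is this step: making the choice of $J_s$ uniform, with openness in the $C^\infty$ topology, and verifying that the Kähler cone of $J_s$ contains $[\omega']$. I will handle this using the description of the Kähler cone via positivity on curves, combined with the fact that generic small deformations have few curves.

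For the negative statement, take $X$ with a negatively curved Kähler metric and $b_2^+>1$, and use Example~\ref{example:Tedi}. By Siu--Zheng rigidity, every complex structure on $X$ is $J$ or $\bar J$, so $\mathcal{SK}(X)=\mathcal{SK}_J(X)\cup\mathcal{SK}_{\bar J}(X)$ and all such classes lie in $H^{1,1}_J$. Now $\omega+t\operatorname{Re}\phi$ is symplectic and $C^\infty$-close to $\omega$ for small $t\neq0$, but its class is not in $H^{1,1}_J$. Hence it is not of Kähler type, and openness fails.
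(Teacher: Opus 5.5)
Your reduction to cohomological openness is correct and follows the paper: $\mathcal{ST}(X)$ is open, and Theorem~\ref{thm:main1} means you only need $\mathcal K^c_{\mathrm{Int}}(X)$ to be open at $[\omega]$. Your case (1) and your non-openness argument are also correct. Dr\u{a}ghici's path $\omega+t\operatorname{Re}\phi$ just makes explicit why $\mathcal K^c_{\mathrm{Int}}(X)\subseteq H^{1,1}_J(X;\RR)$ obstructs openness. One correction there: rigidity gives $J'=f^*(\pm J)$ with $f$ homotopic to the identity, not $J'=\pm J$. So your equality $\mathcal{SK}(X)=\mathcal{SK}_J(X)\cup\mathcal{SK}_{\bar J}(X)$ is false as written. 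Such $f$ act trivially on cohomology, though, so the statement about classes, which is all you use, holds (Corollary~\ref{cor:rigidity}).

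The gap is in case (2), at the step you flag yourself. Kodaira--Spencer gives $J_s$ a K\"ahler form $\omega_s$ with $[\omega_s]$ near $[\omega]$, but that does not put the nearby class $[\omega']$ into $\mathcal K^c_{J_s}(X)$. You need a neighbourhood of $[\omega_s]$ in $H^{1,1}_{J_s}$, contained in the K\"ahler cone, whose size is uniform in $s$.

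Your remedy (positivity on curves plus ``generic small deformations have few curves'') cannot supply this, because $J_s$ is not generic: it is pinned down by the requirement $[\omega']\in H^{1,1}_{J_s}$. For a dense set of nearby classes the $L_\RR$-component $\kappa'$ is rational, so the minimal model $(X,J_s)$ is forced to be projective. On the blowup, the relevant negative curves are then proper transforms of curves through the centres with high multiplicity. Whether $\kappa'-\sum\lambda_i'e_i$ is K\"ahler becomes a Seshadri/Nagata-type question that genericity does not decide. Such questions can have negative answers: this is the phenomenon behind $\mathcal{SK}(X_1)\neq\mathcal{ST}(X_1)$ for $X=\mathbb T^4$, noted in the remark after Corollary~\ref{cor:uniqueness on blowup} via \cite{LMS13}.

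What is needed is a local stability statement at $(I,\omega)$. The paper's proof of Theorem~\ref{thm:blowup openness}, adapting \cite{EV16}, gets it without looking at curves:
\begin{enumerate}
\item Project $P(\varphi)$ onto $(\kappa')^\perp$ to get $\varphi_{\kappa'}$, and lift through a local section of the blowup family to get $I_t$ with $t\in H^{1,1}_{I_t}$ and $I_a=I$.
\item Take a smooth family of K\"ahler metrics $g_t$ for $I_t$ by Kodaira--Spencer, and let $\Omega_t$ be the $g_t$-harmonic representative of $t$; it is automatically of type $(1,1)$.
\item Write $\omega=\Omega_a+\sqrt{-1}\partial\bar\partial F$ by the $\partial\bar\partial$-lemma, and set $\omega_t=\Omega_t+\sqrt{-1}\partial_{I_t}\bar\partial_{I_t}F$.
\end{enumerate}
This is a continuous family of closed real $I_t$-$(1,1)$-forms in the classes $t$ with $\omega_a=\omega$, hence $I_t$-K\"ahler for $t$ near $a$.

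If you want to keep the curve approach, uniformity can instead come from compactness of normalized currents of integration over the offending curves. Their limit would be a nonzero closed positive $I$-current $T$ with $a\cdot[T]\le 0$, contradicting $a\cdot[T]=\int\omega\wedge T>0$. Either way, genericity plays no role.

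Finally, for minimal K3 and $\mathbb T^4$ no deformation argument is needed. Theorem~\ref{thm:minimal main} shows $\mathcal K^c_{\mathrm{Int}}(X)$ is the whole positive cone, which is open.
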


 Using geometric flow techniques, Fei--Phong--Picard--Zhang \cite{FPPZ}
and Streets--Tian \cite{StT} proved that, for a K\"ahler manifold
$(X,J,\omega)$ of arbitrary dimension such that $c_1(X,J)=0$, the
inclusion
\(
\mathcal{SK}(X)\hookrightarrow\mathcal{S}(X)
\)
is open at $\omega$. In dimension four, the theorem above applies to a
broader class of manifolds, including blowups of Calabi--Yau surfaces
and other manifolds with $b_2^+=1$. Our proof of openness
in dimension four does not rely on geometric flows. For K3 and $\mathbb T^4$, an openness argument was outlined by Entov--Verbitsky in \cite[Remark 1.3]{EV23} and \cite[Example 2.1]{EVlong}. Our main effort here is to establish the openness for their blowups. 
On the other hand, the above theorem also provides examples for which openness fails. For example, the Cartwright--Steger surface \cite{CSsurface} is a certain complex ball quotient with numerical invariants
\(
p_g=1, h^{1,1}=3
\). Its underlying smooth manifold has $b_2^+=3$ so that the space of K\"ahler-type forms is not open in the space of symplectic forms.

\medskip
\noindent\textbf{Organization of the paper.}
Section \ref{section:prep} collects the background material used throughout the paper. In Section \ref{section:b+1}, we treat the case $b_2^+=1$, where the deformation-to-isotopy theorem allows us to establish our results by relatively direct arguments. The technical core of the paper is Section \ref{section:k3}, where we analyze K3 surfaces, complex tori, and their blowups via the Torelli theorem and the topology of the period domain. Finally, in Section \ref{section:proof}, we assemble these ingredients to prove the main theorems stated in Section \ref{section:main results}.

\medskip
\noindent\textbf{Acknowledgments.}
The authors are supported by Simons Travel Fund SFI-MPS-TSM-00013390. We wish to thank   Tedi Draghici,  Yi Du, Hao Fang, Cheuk Yu Mak, Adriano Tomassini, Xiaokui Yang, Saikee Yeung, Weiyi Zhang and Fangyang Zheng for useful discussions about various aspects of this paper over the years. 

\section{Preparations}\label{section:prep}
\subsection{Cohomological aspects}

We begin by recalling some cohomological results. Let $(X,J)$ be a
closed almost complex $4$-manifold. \cite{LZ09} considered the
decomposition
\[
H^2(X;\RR)=H_J^+(X)\oplus H_J^-(X),
\]
where $H_J^+(X)$ and $H_J^-(X)$ consist of the cohomology classes that
can be represented by closed $J$-invariant and $J$-anti-invariant real
$2$-forms, respectively. The $J$-compatible cone
$\mathcal K_J^c(X)$ is naturally contained in $H_J^+(X)$.
When $J$ is integrable, they agree with the real parts of the usual Dolbeault cohomology groups
\begin{align*}
H_J^+(X)
&=
H^{1,1}_J(X;\RR)=H^{1,1}_{\overline{\partial}}(X)\cap H^2(X;\RR),\\
H_J^-(X)
&=(
H_{J}^{2,0}(X)
\oplus
H_{J}^{0,2}(X)
)_\RR=
(
H_{\overline{\partial}}^{2,0}(X)
\oplus
H_{\overline{\partial}}^{0,2}(X)
)
\cap H^2(X;\RR).
\end{align*}

\begin{theorem}\label{thm:LZb1}
Let $(X,J)$ be a closed almost complex $4$-manifold.
\begin{enumerate}
    \item (\cite{LZ09,ST10}) If $J$ is integrable, then
    $\mathcal K_J^c(X)$ is empty if and only if
    $\mathcal K_J^t(X)$ is empty.

    \item (\cite{LZ09}) If $\mathcal K_J^c(X)$ is nonempty, then
    \(
    \mathcal K_J^t(X)
    =
    \mathcal K_J^c(X)+H_J^-(X)
    \).
\end{enumerate}
\end{theorem}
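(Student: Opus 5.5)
Theorem~\ref{thm:LZb1} is quoted from \cite{LZ09,ST10}, so the plan is to reconstruct their argument. It rests on two ingredients: Hodge theory on the almost complex surface, and the cohomological characterization of K\"ahler classes on compact complex surfaces.

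\textbf{Item (2).} I would prove the two inclusions separately.

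For $\mathcal K_J^c(X)+H_J^-(X)\subseteq \mathcal K_J^t(X)$, take a $J$-compatible form $\omega$ and a class $b\in H_J^-(X)$.
\begin{enumerate}
\item Represent $b$ by a closed $J$-anti-invariant form $\beta$.
\item Note that $\beta(v,Jv)=0$ for every tangent vector $v$. Indeed, $J$-anti-invariance gives $\beta(Jv,J^2v)=-\beta(v,Jv)$, while $\beta(Jv,J^2v)=\beta(Jv,-v)=\beta(v,Jv)$.
\item Conclude that $\omega+\beta$ is closed and $(\omega+\beta)(v,Jv)=\omega(v,Jv)>0$. Nondegeneracy follows from taming, so $\omega+\beta$ is a symplectic form tamed by $J$ in the class $[\omega]+b$.
\end{enumerate}

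For the reverse inclusion, take $\omega$ tamed by $J$ and write $\omega=\omega^{+}+\omega^{-}$ for its $J$-invariant and $J$-anti-invariant parts. The pointwise form $\omega^{+}$ is positive, but $\omega^{-}$ need not be closed. The task is to show that the projection of $[\omega]$ to $H_J^+$, relative to the decomposition $H_J^+\oplus H_J^-$, lies in $\mathcal K_J^c$.

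\begin{enumerate}
\item The decomposition itself comes from \cite{LZ09}. One uses $\Lambda^-\cap(\Lambda^{2,0}\oplus\Lambda^{0,2})_{\RR}$ and $J$-invariant self-dual forms, together with the fact that $\omega^{+}$ is self-dual up to the anti-self-dual $(1,1)$ part.
\item In the integrable case, $\omega^{2,0}+\omega^{0,2}$ satisfies $d\omega=0$, so $\bar\partial\omega^{0,2}=0$. By Hodge theory on the surface we can write $\omega^{0,2}=h+\bar\partial\gamma$ with $h$ harmonic, hence $d$-closed. Then $\omega-(h+\bar h)-d(\gamma+\bar\gamma)$ is cohomologous to the projection of $[\omega]$ to $H^{1,1}_J$.
\item The key step is to show that this projection is a K\"ahler class. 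I would use the Buchdahl--Lamari characterization: a real $(1,1)$ class $\alpha$ with $\alpha^2>0$ is K\"ahler (when $b_1$ is even) iff $\alpha\cdot[D]>0$ for every effective curve $D$, and $\alpha\cdot[T]>0$ for every positive $d$-closed $(1,1)$-current $T$.
\item For curves, $\alpha\cdot[D]=\int_D\omega>0$ because $\omega$ tames $J$ and $D$ is $J$-holomorphic. The same pairing argument works for positive $(1,1)$-currents, since such currents annihilate $(2,0)$ and $(0,2)$ parts.
\item Positivity of $\alpha^2$ would come from $\alpha^2=[\omega]^2-b^2\ge[\omega]^2>0$, because $H_J^-$ is negative definite on anti-invariant classes when $J$ is integrable. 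More precisely, real parts of holomorphic 2-forms are self-dual, so I must be careful with the sign here; I would instead argue via duality pairing with positive currents.
\end{enumerate}

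\textbf{Item (1).} The direction ``compatible nonempty implies tamed nonempty'' is trivial. For the converse, tamed nonempty implies compatible nonempty; the point is that the complex surface is K\"ahler. This follows from the previous step, or from Harvey--Lawson: a compact complex surface is non-K\"ahler iff it carries a nonzero positive $(1,1)$-current that is the $(1,1)$-component of an exact current. Pairing such a current with a taming $\omega$ gives zero by exactness but a positive value by taming, a contradiction.

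\textbf{Main obstacle.} I expect the hardest step to be the reverse inclusion in (2) when $J$ is only almost complex. There, Hodge theory for $\bar\partial$ is unavailable, and one must use the \cite{LZ09} argument instead: self-dual harmonic forms, together with a Hahn--Banach duality between the tamed cone and $J$-positive closed currents, in the style of Sullivan/Harvey--Lawson.
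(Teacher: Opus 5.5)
\textbf{Item (1): the Harvey--Lawson argument fails, and the fallback is circular.} The paper does not reprove this theorem; it only quotes it from \cite{LZ09,ST10}. Your sketch therefore has to stand on its own, and it does not. In Harvey--Lawson's criterion the current is only the $(1,1)$-component of a boundary, $T=(dS)^{1,1}$. Hence
\[
T(\omega)=T(\omega^{1,1})=dS(\omega^{1,1})=\pm S(d\omega^{1,1}),
\]
and $d\omega^{1,1}=-d(\omega^{2,0}+\omega^{0,2})$ need not vanish, so nothing contradicts $T(\omega^{1,1})>0$. A sanity check shows something must be wrong: your argument never uses $\dim_{\CC}X=2$, and Harvey--Lawson's criterion holds in every dimension, so it would prove the Streets--Tian Conjecture~\ref{conj:ST} outright. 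You need a genuinely two-dimensional input. One option is the fact that a compact complex surface with $b_1$ odd carries a nonzero positive $(1,1)$-current that is an honest boundary $T=dS$; then $0<T(\omega^{1,1})=T(\omega)=\pm S(d\omega)=0$. The other is the arguments of \cite{LZ09,ST10}. Your fallback, that (1) ``follows from the previous step'', is circular. The Buchdahl--Lamari description of the K\"ahler cone you use there presupposes that $b_1$ is even, which is exactly what (1) has to establish.

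\textbf{Item (2): the key inequality $[\alpha]^2>0$ is missing.} The inclusion $\mathcal K_J^c(X)+H_J^-(X)\subseteq\mathcal K_J^t(X)$ is fine. For the reverse inclusion you only point to \cite{LZ09} when $J$ is not integrable, so (2) is not proved in the stated generality. In the integrable case, $b_1$ even does follow from $\mathcal K_J^c(X)\neq\varnothing$. There your reduction is correct: it suffices to show $[\alpha]\in\mathcal K_J^c(X)$ for $\alpha=\omega^{1,1}-\eta$, $\eta:=\partial\gamma+\overline{\partial\gamma}$, and your curve condition $[\alpha]\cdot[D]=\int_D\omega>0$ also holds. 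But $[\alpha]^2>0$ is not established. Your identity $[\alpha]^2=[\omega]^2-b^2$ has $b^2=2\int h\wedge\bar h\ge 0$, so it goes the wrong way (compare Example~\ref{example:Tedi}). ``Duality with positive currents'' is not an argument.

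The fix is to compare with $\omega^{1,1}$ rather than with $\omega$. Note that $\eta$ is the $(1,1)$-part of the exact form $d(\gamma+\bar\gamma)=\bar\partial\gamma+\eta+\partial\bar\gamma$. By type considerations and Stokes, $\int\alpha\wedge\eta=0$ and
\[
0=\int\bigl(d(\gamma+\bar\gamma)\bigr)^2=\int\eta^2+2\int\bar\partial\gamma\wedge\overline{\bar\partial\gamma}.
\]
Since the last integrand is a nonnegative volume form, $\int\eta^2\le 0$. Therefore
\[
[\alpha]^2=\int(\omega^{1,1})^2-\int\eta^2\ge\int(\omega^{1,1})^2>0.
\]
Likewise, for a K\"ahler form $\omega_0$ one has $[\alpha]\cdot[\omega_0]=\int\omega^{1,1}\wedge\omega_0>0$. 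This places $[\alpha]$ in the correct component of the positive cone, which is a further hypothesis of the Buchdahl--Lamari criterion that you omitted. With these three facts, the criterion gives $[\alpha]\in\mathcal K_J^c(X)$.
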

Since this paper concerns K\"ahler forms and holomorphically tamed symplectic forms, we
will primarily apply the integrable case of the above theorem. 
By considering the map \[c:Z(X)\rightarrow H^2(X;\RR)\] which sends a closed 2-form to its cohomology class, compatible and tamed cones can be expressed as the images under the map $c$:
\begin{align*}
\mathcal{K}_J^c(X)&=c(\mathcal{SK}_J(X)),\quad \mathcal{K}_J^t(X)=c(\mathcal{ST}_J(X)),\\
\mathcal{K}_{\text{Int}}^c(X)&=c(\mathcal{SK}(X)),\quad \mathcal{K}_{\text{Int}}^t(X)=c(\mathcal{ST}(X)).
\end{align*}
 Clearly, we have
$$\mathcal K^c_{\text{Int}}(X) \subseteq \mathcal K^t_{\text{Int}}(X) \subseteq c(\mathcal{S}(X)),$$
where the rightmost set is called the symplectic cone. Since nondegeneracy and tameness with respect to a fixed complex structure are open conditions, both the integrable tamed cone and symplectic cone are open subsets of \(H^2(X;\RR)\).
Example~\ref{example:Tedi} shows that the first inclusion can be strict when $b_2^+(X)>1$. On the other hand, \cite{CP12,NingJLMS} provide examples with $b_2^+(X)=1$ for which \(\mathcal K_{\operatorname{Int}}^c(X) \subsetneq c(\mathcal S(X)). \) By Theorem~\ref{thm:LZb1}, one has \( \mathcal K_{\operatorname{Int}}^c(X) = \mathcal K_{\operatorname{Int}}^t(X) \) whenever $b_2^+(X)=1$. Consequently, in these examples the second inclusion is also strict: \( \mathcal K_{\operatorname{Int}}^t(X) \subsetneq c(\mathcal S(X)). \)

\begin{lemma}\label{lem: continuity of the class map}
    The cohomology class map $c$ is continuous.
\end{lemma}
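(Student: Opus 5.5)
The map $c\colon Z(X)\to H^2(X;\RR)$ is linear. It therefore suffices to show that $c$ is bounded with respect to a single $C^k$ seminorm; continuity in the $C^\infty$-topology, which is defined by all $C^k$ seminorms, then follows at once. Throughout, $H^2(X;\RR)$ is a finite-dimensional real vector space with its Euclidean topology, and $Z(X)$ carries the subspace topology from $\Omega^2(X)$ with the $C^\infty$-topology.

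We detect a cohomology class by integration against cycles. Since $X$ is closed, $H_2(X;\ZZ)$ is finitely generated. Choose smooth closed oriented surfaces, or more generally smooth singular $2$-cycles $\Sigma_1,\dots,\Sigma_m$, whose classes span $H_2(X;\RR)$. Every integral class in dimension $2$ is representable by an embedded surface, but smooth cycles are enough here. By de Rham's theorem together with the universal coefficient theorem over $\RR$, the map
\[
H^2(X;\RR)\to\RR^m,\qquad [\alpha]\mapsto\Bigl(\int_{\Sigma_1}\alpha,\dots,\int_{\Sigma_m}\alpha\Bigr)
\]
is a linear injection, hence a homeomorphism onto its image. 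It therefore suffices to show that each functional $\alpha\mapsto\int_{\Sigma_i}\alpha$ is continuous on $Z(X)$.

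For this, fix a Riemannian metric on $X$ and write each $\Sigma_i$ as a finite sum of smooth simplices. For a smooth simplex $\sigma$ we have
\[
\Bigl|\int_\sigma\alpha\Bigr|\le C_\sigma\|\alpha\|_{C^0},
\]
where $C_\sigma$ depends only on the $C^1$ norm of $\sigma$. Hence each integration functional is bounded by a constant times the $C^0$ norm, and is thus continuous in the $C^0$ topology and a fortiori in the $C^\infty$ topology.

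An alternative route is Hodge theory. Fix a metric and let $\{h_j\}$ be an $L^2$-orthonormal basis of the harmonic $2$-forms. Then $c(\alpha)$ is identified with the vector $\bigl(\int_X\alpha\wedge *h_j\bigr)_j$, because the harmonic projection of a closed form represents its class. Each of these functionals is bounded by the $L^2$ norm, hence by the $C^0$ norm.

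No step here is a genuine obstacle. The only point needing care is the identification of $H^2(X;\RR)$ with the dual of $H_2(X;\RR)$ via integration, that is, de Rham's theorem. This guarantees that the period map is injective, so that the topology it induces on $H^2(X;\RR)$ is the standard one.
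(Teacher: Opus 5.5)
Your proposal is correct and is essentially the paper's argument: both identify $c$ with the period map $\alpha\mapsto\bigl(\int_{\Sigma_1}\alpha,\dots,\int_{\Sigma_m}\alpha\bigr)$ over cycles spanning $H_2(X;\RR)$ and observe that each integral depends continuously on $\alpha$. You additionally make explicit two points the paper leaves implicit: the $C^0$ bound on each integral, and the de Rham injectivity that makes the period map a topological embedding of $H^2(X;\RR)$.
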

\begin{proof}
    One can choose a basis in $H_2(X;\RR)$ represented by embedded surfaces $C_1,\cdots,C_r$. Then, the map $c$ can be interpreted as period map $\alpha\mapsto(\int_{C_1}\alpha,\cdots,\int_{C_r}\alpha)\in \RR^d$. Since each integration depends continuously on $\alpha$, the map $c$ is also continuous.
\end{proof}

\subsection{Tamed and compatible forms along a path of almost complex structures}

In this section, we consider the union of tamed and compatible symplectic forms along paths of (almost) complex structures. For tamed forms, we have the following.

\begin{lemma}\label{lem:isotopic}
Let $\{J_t\}_{t\in[0,1]}$ be a path of almost complex structures on an oriented closed smooth $4$-manifold $X$ such that $\mathcal{ST}_{J_t}(X)\neq\varnothing$ for every $t$. Then
 \(
    \bigcup_{t\in[0,1]}\mathcal{ST}_{J_t}(X)
    \)
    is connected.
     Moreover, for any class \(
\kappa\in\bigcap_{t\in[0,1]}\mathcal K_{J_t}^t(X)
\), any two symplectic forms in \[\mathcal{ST}_\kappa(X)\bigcap\Big(\bigcup_{t\in[0,1]}\mathcal{ST}_{J_t}(X)\Big)\] are related by a diffeomorphism in $\Dff(X)$.
\end{lemma}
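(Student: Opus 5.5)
The plan rests on two facts: each $\mathcal{ST}_{J}(X)$ is convex, and tameness is an open condition that persists along nearby almost complex structures. Convexity is immediate, since if $\omega_0,\omega_1$ tame $J$ then so does every $(1-s)\omega_0+s\omega_1$, because $\big((1-s)\omega_0+s\omega_1\big)(v,Jv)>0$ for $v\neq 0$; taming also forces nondegeneracy. In particular each $\mathcal{ST}_{J_t}(X)$ is path connected, as is $\mathcal{ST}_{J_t}(X)\cap c^{-1}(\kappa)$, since convex combinations of cohomologous forms stay in the class $\kappa$.

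For connectedness of the union, fix $t_0$ and some $\omega\in\mathcal{ST}_{J_{t_0}}(X)$. Compactness of $X$ and of the unit sphere bundle, together with continuity of $t\mapsto J_t$ in $C^0$, show that $\omega$ still tames $J_t$ for all $t$ in an open interval $I_{t_0}$ around $t_0$. Hence $\mathcal{ST}_{J_t}(X)\cap\mathcal{ST}_{J_{t_0}}(X)\neq\varnothing$ for $t\in I_{t_0}$, so $\bigcup_{t\in I_{t_0}}\mathcal{ST}_{J_t}(X)$ is path connected: every member meets the convex set $\mathcal{ST}_{J_{t_0}}(X)$. Cover $[0,1]$ by finitely many such intervals $I_{t_0},\dots,I_{t_N}$ ordered so that consecutive ones overlap. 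Consecutive path-connected pieces then share a nonempty $\mathcal{ST}_{J_t}(X)$, so chaining them shows the whole union is path connected.

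For the second claim, let $\omega_0\in\mathcal{ST}_{J_a}(X)$ and $\omega_1\in\mathcal{ST}_{J_b}(X)$ both represent $\kappa$. I would build a path of symplectic forms in the class $\kappa$ joining them and then invoke Moser's stability theorem, which gives an isotopy, hence an element of $\Dff(X)$, pulling $\omega_1$ back to $\omega_0$. The chaining argument must now be run inside $c^{-1}(\kappa)$, and here the hypothesis $\kappa\in\mathcal K^t_{J_t}(X)$ for every $t$ is used: for each $t_0$ choose $\omega_{t_0}\in\mathcal{ST}_{J_{t_0}}(X)$ with $[\omega_{t_0}]=\kappa$. By openness of taming, $\omega_{t_0}$ tames $J_t$ for all $t$ in a neighborhood $I_{t_0}$. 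Any $\kappa$-form taming such a $J_t$ can then be joined linearly to $\omega_{t_0}$ within $\mathcal{ST}_{J_t}(X)\cap c^{-1}(\kappa)$, since both tame $J_t$. Chaining through finitely many overlapping neighborhoods, each step a linear segment inside some $\mathcal{ST}_{J_t}(X)\cap c^{-1}(\kappa)$, produces a piecewise linear path of cohomologous symplectic forms from $\omega_0$ to $\omega_1$.

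The main step needing care is this chaining within a fixed class: the overlaps must contain forms in class $\kappa$, which is why $\kappa$ must lie in every tamed cone, not merely in the cones at the endpoints. Moser's argument applies to piecewise smooth paths; alternatively one can reparametrize to make the path smooth. Either way the resulting diffeomorphism lies in $\Dff(X)$.
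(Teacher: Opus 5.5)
Your proposal is correct and follows essentially the same route as the paper: convexity of each $\mathcal{ST}_{J_t}(X)$ (and of its intersection with the class $\kappa$), openness of taming in $t$, finite chaining over $[0,1]$ by compactness, and Moser's stability applied to the resulting piecewise linear path in the class $\kappa$. Your chaining through consecutive overlapping intervals, joining two forms via a common $J_s$ with $s$ in the overlap, is slightly more careful than the paper's requirement $t_{j+1}\in U_{t_j}$: compactness alone does not guarantee a chain of that form, whereas your overlap argument works for any finite cover.
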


\begin{proof}
Let $\omega_0$ and $\omega_1$ be symplectic forms tamed by $J_0$ and $J_1$ respectively. For each $t\in[0,1]$, choose
a $J_t$-tamed symplectic form $\omega_t$, taking
the prescribed forms at $t=0$ and $t=1$.
Since tameness is an open condition, for every $t\in[0,1]$ there exists
an open neighborhood $U_t$ of $t$ such that $\omega_t$ tames $J_s$ for
every $s\in U_t$. By compactness of $[0,1]$, we may choose a finite
sequence
\(
0=t_0<t_1<\cdots<t_N=1
\)
such that
\(
t_{j+1}\in U_{t_j}
\)
for every $j=0,\ldots,N-1$.
For each $j$, both $\omega_{t_j}$ and $\omega_{t_{j+1}}$ tame
$J_{t_{j+1}}$. Therefore, their linear interpolation
\(
\omega_{j,s}:=(1-s)\omega_{t_j}+s\omega_{t_{j+1}}
\), where $s\in[0,1]$,
also tames $J_{t_{j+1}}$, since the space of $J_{t_{j+1}}$-tamed forms
is convex. Concatenating these finitely many paths gives a path from
$\omega_0$ to $\omega_1$ through symplectic forms. Thus, \(
    \bigcup_{t\in[0,1]}\mathcal{ST}_{J_t}(X)
    \)
    is connected.

If \(
\kappa\in\bigcap_{t\in[0,1]}\mathcal K_{J_t}^t(X)
\) and $[\omega_0]=[\omega_1]=\kappa$, we may choose $\omega_t$ such that $[\omega_t]=\kappa$. This implies
\(
[\omega_{j,s}]=\kappa
\)
for every $s$, so $\omega_{j,s}$ is a symplectic form representing
$\kappa$. Thus, by Moser's stability, we have $\omega_0=f^*\omega_1$ for some $f\in\Dff(X)$.
\end{proof}

A version of this lemma also appears in \cite[Lemma~2.10]{Tolman}, where it was proved using a partition of unity argument. We now adapt that argument to the case of symplectic forms of K\"ahler type.

\begin{lemma}\label{lemma:uniontamed}
Let $\{J_t\}_{t\in[0,1]}$ be a path of integrable complex structures on an oriented closed smooth $4$-manifold $X$. Then
    \(
    \bigcup_{t\in[0,1]}\mathcal{SK}_{J_t}(X)
    \)
    is connected.
\end{lemma}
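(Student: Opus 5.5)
The plan is to imitate the partition-of-unity argument of \cite[Lemma~2.10]{Tolman}. We replace the openness of tameness, which was the input in Lemma~\ref{lem:isotopic}, with the Kodaira--Spencer stability theorem \cite{KSstab}. The point is that a linear interpolation between two forms is only guaranteed to be $J$-compatible when both forms are compatible with the \emph{same} $J$. So we cannot interpolate between forms attached to nearby parameters as in Lemma~\ref{lem:isotopic}. Instead we will produce a continuous section $s\mapsto\omega_s\in\mathcal{SK}_{J_s}(X)$ over all of $[0,1]$.

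\textbf{Step 1: reduce to the case where every $J_t$ is K\"ahler.} If the union is empty there is nothing to prove. Otherwise some $(X,J_{t_0})$ is K\"ahler, so $b_1(X)$ is even. By the classification of compact complex surfaces (Buchdahl, Lamari), a compact complex surface is K\"ahler if and only if $b_1$ is even. Since $b_1(X)$ depends only on the smooth manifold, every $\mathcal{SK}_{J_t}(X)$ is nonempty. Alternatively, one can avoid this: Kodaira--Spencer shows that the set of $t$ with $J_t$ K\"ahler is open, but closedness is not available from stability alone, so I would invoke the $b_1$-criterion.

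\textbf{Step 2: local families.} For each $t\in[0,1]$, pick a $J_t$-K\"ahler form. The Kodaira--Spencer stability theorem, applied to the family $\{J_s\}$ near $t$, gives an open interval $U_t\ni t$ and a family $\{\omega^{t}_s\}_{s\in U_t}$ with the following properties: $\omega^t_s$ is a K\"ahler form for $J_s$, and the family depends continuously (indeed smoothly) on $s$ in the $C^\infty$ topology. Using compactness, choose a finite subcover $U_1,\dots,U_N$ with families $\omega^i_s$, and a partition of unity $\{\rho_i\}$ on $[0,1]$ subordinate to it. Define
\[
\omega_s:=\sum_{i}\rho_i(s)\,\omega^i_s .
\]
For fixed $s$, each $\omega^i_s$ with $\rho_i(s)>0$ is a closed, $J_s$-invariant, $J_s$-positive real $(1,1)$-form. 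These conditions define a convex cone, so $\omega_s\in\mathcal{SK}_{J_s}(X)$, and $s\mapsto\omega_s$ is continuous into $Z(X)$.

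\textbf{Step 3: connect the endpoints.} Let $\omega\in\mathcal{SK}_{J_a}(X)$ and $\omega'\in\mathcal{SK}_{J_b}(X)$ be arbitrary. The set $\mathcal{SK}_{J_a}(X)$ is convex, so the segment from $\omega$ to $\omega_a$ lies in it, and likewise the segment from $\omega_b$ to $\omega'$ lies in $\mathcal{SK}_{J_b}(X)$. Concatenating these two segments with $\{\omega_s\}_{s\in[a,b]}$ gives a path in the union, which proves connectedness.

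\textbf{Main obstacle.} I expect the main obstacle to be Step 2: one must extract from Kodaira--Spencer a family of K\"ahler forms that is continuous in the parameter in the $C^\infty$ topology, not just a K\"ahler form for each nearby $J_s$. My first attempt is to take the explicit construction. Let $\Pi_s$ be the orthogonal projection, with respect to a continuous family of $J_s$-Hermitian metrics, onto the kernel of the relevant fourth-order elliptic operator, whose kernel dimension is constant in $s$. Then set $\omega^t_s$ to be $\Pi_s$ applied to the $J_s$-$(1,1)$ part of the form chosen at $J_t$. Kodaira--Spencer's theory of constant-dimension elliptic families shows this depends smoothly on $s$, and positivity near $s=t$ follows by continuity.
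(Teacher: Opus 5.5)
Your proposal is correct and follows essentially the paper's argument: the Buchdahl--Lamari criterion makes every $J_t$ K\"ahler, Kodaira--Spencer stability gives local smooth families of $J_s$-K\"ahler forms, and a partition of unity on $[0,1]$ glues them using the convexity of each $\mathcal{SK}_{J_s}(X)$. The only cosmetic difference is at the endpoints. The paper builds the prescribed forms into the construction by choosing $\rho_0(t_0)=\rho_N(t_1)=1$, whereas you attach them afterwards by straight segments inside the convex sets $\mathcal{SK}_{J_a}(X)$ and $\mathcal{SK}_{J_b}(X)$; both work.
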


\begin{proof}
 Assume \(
    \bigcup_{t\in[0,1]}\mathcal{SK}_{J_t}(X)
    \) is nonempty. Let
\(
\omega_{t_0}\in\mathcal{SK}_{J_{t_0}}(X),
\omega_{t_1}\in\mathcal{SK}_{J_{t_1}}(X).
\)
Since $(X,J_{t_0})$ is K\"ahler, $b_1(X)$ is even. By the
topological characterization of compact K\"ahler surfaces
\cite{Buchdahl,Lamari}, every complex surface $(X,J_t)$ is therefore
K\"ahler.

For each $s\in[t_0,t_1]$, choose a $J_s$-K\"ahler form $\omega_s$,
taking the prescribed forms at $s=t_0,t_1$. By the Kodaira--Spencer
stability theorem \cite[Theorem~15]{KSstab}, there exists an open
neighborhood
\(
U_s\subseteq[t_0,t_1]
\)
of $s$ and a smooth family of $J_t$-K\"ahler forms
$\omega_s(t)$ for $t\in U_s$,
such that
\(
\omega_s(s)=\omega_s.
\)

Now, choose a finite subcover
\(
U_{s_0},\ldots,U_{s_N}
\)
with $s_0=t_0$ and $s_N=t_1$, and a smooth partition of unity
$\{\rho_0(t),\ldots,\rho_N(t)\}\subseteq C^{\infty}([t_0,t_1],[0,1])$ subordinate to this cover satisfying
\(
\rho_0(t_0)=1,
\rho_N(t_1)=1.
\)
Set
\[
\Omega_t:=\sum_{j=0}^N\rho_j(t)\omega_{s_j}(t).
\]
For each $t$, the form $\Omega_t$ is a positive linear combination of finitely many $J_t$-K\"ahler forms $\{\omega_{s_j}(t)\}_{j=1}^N$. Therefore, $\Omega_t$ is also $J_t$-K\"ahler. Furthermore,
\(
\Omega_{t_0}=\omega_{t_0},
\Omega_{t_1}=\omega_{t_1}.
\)
Hence,
\(
\bigcup_{t\in[0,1]}\mathcal{SK}_{J_t}(X)
\)
is connected.
\end{proof}

\begin{rmk}
The integrability assumption in Lemma \ref{lemma:uniontamed} is essential to the argument, as it allows us to apply the Kodaira--Spencer stability theorem. An analogous stability result for almost K\"ahler structures is not known in general. Indeed, in real dimension at least six, any K\"ahler complex structure can be included in a path of almost complex structures such that every sufficiently small nontrivial deformation is not even locally compatible with any symplectic form; see \cite{MT00,HMT23}. In dimension four, this is also related to Donaldson's tame-to-compatible question. 
\end{rmk}

\subsection{Rigidity of K\"ahler surfaces with negative sectional curvature}

We recall the strong rigidity result that will be used later. The
marked form of rigidity is important for our purposes, since we need
the resulting biholomorphism to act trivially on cohomology.

\begin{definition}\label{def:rigid}
A compact K\"ahler manifold $(X,J,g)$ is said to be marked
strongly rigid if, for every compact K\"ahler manifold $(Y,J',g')$ and
every homotopy equivalence
\(
h:Y\rightarrow X
\),
there exists a biholomorphism or anti-biholomorphism
\(
f:(Y,J')\rightarrow (X,J)
\)
which is homotopic to $h$.
\end{definition}

Siu proved that a compact K\"ahler manifold whose curvature tensor is strongly negative in his sense is
marked strongly rigid \cite[Lemma 2, Theorem 6, 8]{Siurigidity}.  More
precisely, one first chooses a harmonic map in the prescribed homotopy
class; Siu's curvature condition forces this harmonic map to be
holomorphic or anti-holomorphic, and the degree and homological
assumptions imply that it is a biholomorphism or anti-biholomorphism.

Siu's strong negativity is a condition on the complexified curvature
tensor and is stronger than ordinary negativity of the Riemannian
sectional curvature.  However, in complex dimension two, Zheng \cite{Zhengrigidity} established the
corresponding rigidity result under the weaker Riemannian curvature
hypothesis. See also \cite{ChenYang}.

\begin{theorem}[\cite{Siurigidity,Zhengrigidity}]\label{thm:rigidity}
    Any compact K\"ahler surface admitting a
K\"ahler metric of negative sectional curvature is marked strongly
rigid.
\end{theorem}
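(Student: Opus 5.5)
The plan is to follow Siu's harmonic map strategy, as summarized before Definition~\ref{def:rigid}, and to use Zheng's refinement only at the one point where Siu's strong negativity is needed. Let $(X,J,g)$ carry a K\"ahler metric of negative sectional curvature, let $(Y,J',g')$ be compact K\"ahler, and let $h:Y\to X$ be a homotopy equivalence.

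\textbf{Step 1: a harmonic representative of rank four.} Since a homotopy equivalence preserves top homology, $Y$ is a closed $4$-manifold, hence a compact K\"ahler surface. Because $X$ has negative sectional curvature, the Eells--Sampson theorem gives a harmonic map $f:Y\to X$ homotopic to $h$. Hartman's theorem makes $f$ unique unless its image is a closed geodesic, and this cannot happen. Indeed, $h$ has degree $\pm1$, so $f^*$ is nonzero on $H^4$. Consequently the real differential $df$ has rank $4$ on a nonempty open set.

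\textbf{Step 2: the Bochner argument, forcing $f$ to be $\pm$-holomorphic.} Siu's Bochner-type identity applied to $\bar\partial f$ gives
\[
\int_Y \bigl|\nabla'' \bar\partial f\bigr|^2 \;-\; \int_Y R^X\bigl(\partial f,\overline{\partial f},\bar\partial f,\overline{\bar\partial f}\bigr)\wedge \omega_Y^{\,n-2}=0 .
\]
Here we use that $\omega_Y$ is closed. This identity yields $\nabla''\bar\partial f=0$ and the vanishing of the complexified curvature term. Under Siu's strong negativity, the vanishing of that term together with $\operatorname{rank}_{\RR} df\ge 4$ at some point forces $\bar\partial f\equiv0$ or $\partial f\equiv 0$ there. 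The analyticity of harmonic maps and unique continuation then propagate this globally.

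This is the step I expect to be the main obstacle. Ordinary negative sectional curvature does not control the complexified curvature term in general. The plan is to exploit $\dim_\CC X=2$, as Zheng does. The vanishing of the curvature term says that the images of $\partial f$ and $\bar\partial f$ span, pointwise, a set of complex directions on which a sectional-curvature-type expression vanishes. For a K\"ahler surface, these directions can be decomposed in terms of real $2$-planes spanned by vectors $u$ and $Ju$, or by $u$ and $v$. A pointwise linear-algebra analysis of the possible ranks of $\partial f$ and $\bar\partial f$ should then show the following. On the open set where $df$ has real rank $4$, a mixed configuration (both $\partial f\neq0$ and $\bar\partial f\neq 0$) would produce a real $2$-plane of zero sectional curvature, contradicting negativity. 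So on that open set $f$ is holomorphic or anti-holomorphic, and unique continuation again globalizes.

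\textbf{Step 3: from $\pm$-holomorphic to biholomorphic.} After possibly replacing $J$ by $-J$, $f:(Y,J')\to(X,J)$ is holomorphic of degree $\pm1$, hence generically one-to-one and therefore a bimeromorphic morphism. By Zariski's factorization, $f$ is a composition of blow-downs, each of which would strictly increase $b_2$. But $b_2(Y)=b_2(X)$ because $f\simeq h$ is a homotopy equivalence. Alternatively, $X$ contains no rational curves, since it is negatively curved and $\pi_2(X)=0$. Either way, $f$ contracts no curves and is a biholomorphism (or anti-biholomorphism) homotopic to $h$, which is the marked strong rigidity required in Definition~\ref{def:rigid}.
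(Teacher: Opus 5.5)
\textbf{Verdict: there is a genuine gap in Step 2.} Your outline matches the paper's, which does not prove this statement itself: it cites Siu and Zheng with the same summary (harmonic representative, Siu's $\partial\bar\partial$-Bochner identity, then degree and homology). Steps 1 and 3 are essentially fine. In Step 3 the $b_2$ argument suffices; the alternative via rational curves in $X$ is beside the point, since any contracted curves would lie in $Y$.

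The gap sits exactly where this statement goes beyond Siu's theorem. The integral identity yields $D'\bar\partial f=0$ and the vanishing of the curvature term \emph{only if} the curvature integrand has a pointwise sign; that sign is precisely what Siu's hypothesis supplies. (The relevant term is the $(1,1)$-part $D'\bar\partial f$; the $(0,2)$-part $D''\bar\partial f$ vanishes identically for any map.) You assert the vanishing first and only afterwards try to use sectional curvature, so as written the argument is circular. Your pointwise mechanism also fails as stated. Write $\partial f=\sum_i a_i\,dz^i$ and $\bar\partial f=\sum_j c_j\,d\bar z^j$. Then the integrand is $Q(\xi)$ with $\xi=a_1\otimes\bar c_2-a_2\otimes\bar c_1\in T^{1,0}X\otimes\overline{T^{1,0}X}\cong\Lambda^{1,1}_{\CC}$. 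Here $Q$ is, up to normalization, the Hermitian extension of the curvature operator $\mathcal R$ restricted to real $(1,1)$-bivectors. The real and imaginary parts of $\xi$ are in general not decomposable, so $Q(\xi)=0$ does not by itself produce a plane of zero sectional curvature.

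\textbf{The missing ingredient} is a pointwise algebraic lemma: on a K\"ahler surface, negative sectional curvature forces $\mathcal R|_{\Lambda^{1,1}_{\RR}}$ to be negative definite. Since in complex dimension two every $\xi$ has the form $A\otimes\bar B-C\otimes\bar D$, this is exactly Siu's strong negativity. Here is one way to prove it.
\begin{itemize}
\item Write $\Lambda^2=\RR\hat\omega\oplus\Lambda^{2,0}_{\RR}\oplus\Lambda^-$, with $\hat\omega$ the unit K\"ahler form. Then $\mathcal R$ kills $\Lambda^{2,0}_{\RR}$, preserves $\Lambda^{1,1}_{\RR}=\RR\hat\omega\oplus\Lambda^-$, and satisfies $\langle\mathcal R\hat\omega,\hat\omega\rangle=s/4=\operatorname{tr}(\mathcal R|_{\Lambda^-})$. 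A bivector is decomposable iff its self-dual and anti-self-dual parts have equal length.
\item Let $\eta=\tau\hat\omega+y$ with $|\tau|\le|y|$. Adding $\zeta\in\Lambda^{2,0}_{\RR}$ with $|\zeta|^2=|y|^2-\tau^2$ gives a decomposable bivector with the same curvature, so $\langle\mathcal R\eta,\eta\rangle<0$.
\item In particular $\mathcal R|_{\Lambda^-}<0$, and the trace identity then gives $|\langle\mathcal R y,y\rangle|<\tfrac{|s|}{4}|y|^2$.
\item For $|\tau|>|y|$, the function $\tau\mapsto\langle\mathcal R(\tau\hat\omega+y),\tau\hat\omega+y\rangle$ is concave because $s<0$. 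Its negativity at $\tau=\pm|y|$, together with the bound above, places its maximum in $|\tau|<|y|$, so it is negative there too.
\end{itemize}

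With this sign in hand, Siu's identity does give $D'\bar\partial f=0$ and $\xi\equiv 0$. At a point where $df$ has real rank four, the relation $a_1\otimes\bar c_2=a_2\otimes\bar c_1$ then gives:
\begin{itemize}
\item $c=0$ if $a_1,a_2$ are independent;
\item $\operatorname{rank}_{\RR}df\le 2$ if $a$ has rank one, which is excluded;
\item anti-holomorphicity at that point if $a=0$.
\end{itemize}
Unique continuation and your Step 3 then finish the proof.
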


More generally, Zheng \cite[Proposition~3]{Zhengrigidity} even proved that a
nonpositively curved K\"ahler surface of general type satisfying
\(
c_1^2(X)>2c_2(X)
\)
is marked strongly rigid. An important class of examples is given by compact complex ball
quotients.  Recall that such a surface has the form
\(
X=\Pi\backslash B_{\CC}^2,
\)
where $\Pi<\operatorname{PU}(2,1)$ is a torsion-free cocompact
lattice and $ B_{\CC}^2$ is the complex unit ball.  The standard
complex-hyperbolic metric on $ B_{\CC}^2$ is invariant under
$\operatorname{PU}(2,1)$ and descends to $X$.  Its curvature tensor is
strongly negative even in the sense of Siu, so compact complex ball
quotients are marked strongly rigid by Theorem \ref{thm:rigidity}.  

\begin{corollary}\label{cor:rigidity}
Suppose that $X$ underlies a K\"ahler surface of negative sectional
curvature. Let $J$ and $J'$ be two complex structures on $X$. Then
\(
H_J^{1,1}(X;\RR)=H_{J'}^{1,1}(X;\RR).
\)
Moreover, if
\(
\mathcal K_J^c(X)\cap\mathcal K_{J'}^c(X)\neq\varnothing,
\)
then there exists $f\in\Dfh(X)$ such that
\(
f^*J'=J.
\)
\end{corollary}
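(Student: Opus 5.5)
The plan is to apply marked strong rigidity (Theorem~\ref{thm:rigidity}) to the identity map, viewed as a homotopy equivalence between $(X,J')$ and $(X,J)$.

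\textbf{Step 1: rigidity.} Fix a complex structure $J$ on $X$ carrying a K\"ahler metric of negative sectional curvature, and let $J'$ be any other complex structure. Since $(X,J)$ is K\"ahler, $b_1(X)$ is even, so $(X,J')$ is K\"ahler as well \cite{Buchdahl,Lamari}. Theorem~\ref{thm:rigidity} then yields a biholomorphism or anti-biholomorphism $f:(X,J')\to(X,J)$ homotopic to $\mathrm{id}_X$.

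\textbf{Step 2: the map lies in $\Dfh(X)$.} Because $f$ is homotopic to the identity, it acts trivially on $H^*(X;\RR)$. In particular it has degree one, so it preserves orientation and $f\in\Dfh(X)$. Moreover $f^*J=\pm J'$, the sign depending on whether $f$ is holomorphic or anti-holomorphic.

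\textbf{Step 3: equality of $(1,1)$-classes.} Both $J$ and $-J$ have the same real $(1,1)$-forms, namely the $J$-invariant ones, so $f^*$ carries $H^{1,1}_J(X;\RR)$ onto $H^{1,1}_{J'}(X;\RR)$. Since $f^*$ is the identity on cohomology, this gives $H^{1,1}_J(X;\RR)=H^{1,1}_{J'}(X;\RR)$.

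\textbf{Step 4: the compatible-cone hypothesis.} Suppose now that $\mathcal K_J^c\cap\mathcal K_{J'}^c\neq\varnothing$. If $f$ is holomorphic, then $f^*J=J'$, and $f^{-1}\in\Dfh(X)$ satisfies $(f^{-1})^*J'=J$, as required. It remains to rule out the anti-holomorphic case, and this is the main obstacle.

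\textbf{Ruling out the anti-holomorphic case.} Assume $f^*J=-J'$. Take a class $a$ in the intersection, so $a$ is represented by a $J'$-K\"ahler form $\omega'$. Then $f_*\omega'=(f^{-1})^*\omega'$ is compatible with $(f^{-1})^*J'=-J$, and it still represents $a$ because $f$ acts trivially on cohomology. Hence $-f_*\omega'$ is a $J$-K\"ahler form, so $-a\in\mathcal K_J^c$. But $a\in\mathcal K_J^c$ as well, and $\mathcal K^c_J$ is a convex cone (sums of $J$-K\"ahler forms are $J$-K\"ahler). Therefore $0=\tfrac12 a+\tfrac12(-a)$ would be a K\"ahler class. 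This is impossible, since a K\"ahler class satisfies $a^2>0$.

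So $f$ must be holomorphic, and $f^{-1}$ is the desired element of $\Dfh(X)$.
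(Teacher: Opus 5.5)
Your proposal is correct and matches the paper's proof: marked strong rigidity applied to the identity gives $f$ homotopic to $\mathrm{id}_X$ with $f^*J=\pm J'$, which yields the equality of $(1,1)$-classes, and the anti-holomorphic case is excluded because it would put both $a$ and $-a$ in $\mathcal K_J^c(X)$ (your convexity argument and the Buchdahl--Lamari remark fill in details the paper leaves implicit). Like the paper, you tacitly take $J$ to be the negatively curved structure $J_0$; for arbitrary $J,J'$, compose the two rigidity maps $(X,J)\to(X,J_0)$ and $(X,J')\to(X,J_0)$, and your argument applies verbatim to the composite.
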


\begin{proof}
By Theorem~\ref{thm:rigidity}, $(X,J)$ is marked strongly rigid.
Applying marked strong rigidity to the identity homotopy equivalence
between $(X,J)$ and $(X,J')$, we obtain a diffeomorphism
$f:X\to X$, homotopic to the identity, such that
\(
f^*J'=\pm J\). Since
\(
H_{-J}^{1,1}(X;\RR)=H_J^{1,1}(X;\RR),
\)
we obtain
\[
H_J^{1,1}(X;\RR)
=
H_{f^*J'}^{1,1}(X;\RR)
=
f^*H_{J'}^{1,1}(X;\RR)
=
H_{J'}^{1,1}(X;\RR).
\]
Now suppose that
\(
a\in\mathcal K_J^c(X)\cap\mathcal K_{J'}^c(X).
\)
Choose forms
\(
\omega\in\mathcal{SK}_J(X),
\omega'\in\mathcal{SK}_{J'}(X)
\)
representing the class $a$.
It follows that $f^*J'=-J$ cannot occur. Indeed,
in this case $f^*\omega'$ is K\"ahler with respect to $-J$, and hence
$-f^*\omega'$ is K\"ahler with respect to $J$. Since $f\in\Dfh(X)$, $\mathcal{K}_J^c(X)$ would contain both $a$ and $-a$, which is impossible. Therefore, $f$ must be a biholomorphism.
\end{proof}

\subsection{Moduli of complex structures}
 We now investigate Question~\ref{question:connected} concerning connectedness. The guiding principle is that the number of connected components of the moduli space of complex structures should provide an upper bound for the number of deformation equivalence classes of both holomorphically tamed and K\"ahler-type symplectic forms. 
 
Recall that the Teichm\"uller space of a closed oriented smooth $4$-manifold $X$ is defined to be
\[\text{Teich}(X):=\{\text{complex structures on $X$}\}/\Dff(X).\]
Like $\widetilde{\mathcal{M}}(X)$, $\text{Teich}(X)$ is also well-known to be possibly non-Hausdorff (\cite{superficial}). One can further take its quotient by either Torelli group or mapping class group
\[\widetilde{\mathcal{I}}(X):=\text{Teich}(X)/\Gamma_h(X),\quad \mathcal{I}(X):=\text{Teich}(X)/\Gamma(X).\]
For any $a\in H^2(X;\RR)$, let us also denote by \[\widetilde{\mathcal{I}}_a(X)\] the union of those connected components of $\widetilde{\mathcal{I}}(X)$ that contain a complex structure $J$ satisfying $a\in\mathcal{K}_J^c(X)$. Clearly, $\#\pi_0(\widetilde{\mathcal{I}}_a(X))\leq \#\pi_0(\widetilde{\mathcal{I}}(X))$ and $\#\pi_0(\mathcal{I}(X))\leq \#\pi_0(\widetilde{\mathcal{I}}(X))$. Using these notations, Theorem \ref{thm:rigidity} and Corollary \ref{cor:rigidity} can be equivalently formulated as \[\#\pi_0(\widetilde{\mathcal{I}}(X))\leq 2,\quad\#\pi_0(\widetilde{\mathcal{I}}_a(X))\leq 1\]
if $X$ underlies a K\"ahler surface with negative sectional curvature.

\begin{theorem}\label{thm:connected}
The numbers of connected components of $\mathcal{MT}(X)$ and
$\mathcal{MK}(X)$ are both bounded above by the number of connected
components of $\mathcal{I}(X)$. More precisely,
\[
\#\pi_0(\mathcal{MT}(X))
\leq
\#\pi_0(\mathcal{I}(X)),
\qquad
\#\pi_0(\mathcal{MK}(X))
\leq
\#\pi_0(\mathcal{I}(X)).
\]
\end{theorem}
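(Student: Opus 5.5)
We will construct a natural map from $\pi_0$ of the symplectic moduli to $\pi_0(\mathcal{I}(X))$ and show it is injective. First discard complex structures that are not K\"ahler. Let $\mathcal{C}_K(X)$ be the space of complex structures $J$ on $X$ with $\mathcal{SK}_J(X)\neq\varnothing$. By Theorem~\ref{thm:LZb1}(1), this is the same as requiring $\mathcal{ST}_J(X)\neq\varnothing$. By the Buchdahl--Lamari characterization used in the proof of Lemma~\ref{lemma:uniontamed}, $\mathcal{C}_K(X)$ is a union of connected components of the space $\mathcal{C}(X)$ of all complex structures, namely those with $b_1$ even, which for fixed $X$ is either all of $\mathcal{C}(X)$ or none of it. If no complex structure is K\"ahler, both $\mathcal{MT}(X)$ and $\mathcal{MK}(X)$ are empty and there is nothing to prove. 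Otherwise every $J$ is K\"ahler, so every $J$ carries both tamed and compatible symplectic forms.

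For $\mathcal{MT}(X)$, consider the relation $\omega\sim J$ when $\omega\in\mathcal{ST}_J(X)$. Assign to a component $A$ of $\mathcal{MT}(X)$ the set of components of $\mathcal{I}(X)=\mathcal{C}(X)/\Df(X)$ containing the class of some $J$ that is tamed by a form projecting into $A$. This set is nonempty. The key claim is the following: if $J,J'$ lie in the same component of $\mathcal{I}(X)$, then every $\omega\in\mathcal{ST}_J(X)$ and $\omega'\in\mathcal{ST}_{J'}(X)$ project to the same component of $\mathcal{MT}(X)$. Since every symplectic form in $\mathcal{ST}(X)$ is related to some $J$, the claim gives a surjection from $\pi_0(\mathcal{I}(X))$ onto $\pi_0(\mathcal{MT}(X))$, which is the desired inequality. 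The same argument works verbatim for $\mathcal{MK}(X)$, with Lemma~\ref{lemma:uniontamed} in place of Lemma~\ref{lem:isotopic}.

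To prove the claim, use that $\mathcal{I}(X)$ carries the quotient topology from $\mathcal{C}(X)$ under $\Df(X)$. Components of $\mathcal{I}(X)$ are therefore images of unions of $\Df(X)$-translates of components of $\mathcal{C}(X)$ that are chained together: the component of $\mathcal{I}(X)$ containing $[J]$ is the image of the smallest $\Df(X)$-invariant union of components of $\mathcal{C}(X)$ containing $J$. More precisely, the preimage of a component is a $\Df(X)$-invariant open-closed set, and the saturation of a component of $\mathcal{C}(X)$ is open and closed because $\mathcal{C}(X)$ is locally path connected as an analytic-type space (Kuranishi). Hence $J'$ is connected to $f^*J$ for some $f\in\Df(X)$, possibly through a finite chain of components and diffeomorphisms.

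This reduces the claim to two moves. The first move handles $J$ and $J''$ joined by a path in $\mathcal{C}(X)$. Here Lemma~\ref{lem:isotopic} (respectively Lemma~\ref{lemma:uniontamed}) shows that $\bigcup_t\mathcal{ST}_{J_t}(X)$ (respectively $\bigcup_t\mathcal{SK}_{J_t}(X)$) is connected, and the hypothesis $\mathcal{ST}_{J_t}\neq\varnothing$ holds by the first step. The second move handles $J''$ and $f^*J''$. Here $f^*$ maps $\mathcal{ST}_{J''}(X)$ onto $\mathcal{ST}_{f^*J''}(X)$, so the images in the quotient by $\Df(X)$ coincide. Also, $\mathcal{ST}_{J}(X)$ is convex, hence connected.

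The main obstacle is the point-set step: path components versus connected components of the non-Hausdorff quotient $\mathcal{I}(X)$, and local path-connectedness of $\mathcal{C}(X)$. I would handle it via Kuranishi's local versal families, which give local path connectivity. If needed, I would interpret $\pi_0$ as path components throughout.
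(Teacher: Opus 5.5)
Your proof is correct and follows essentially the same route as the paper. You reduce to the case where every complex structure is K\"ahler using Theorem~\ref{thm:LZb1} and Buchdahl--Lamari. You then apply Lemma~\ref{lem:isotopic} (resp.\ Lemma~\ref{lemma:uniontamed}) along paths of complex structures, together with $\Df(X)$-equivariance, to conclude that each component of $\mathcal{I}(X)$ contributes at most one component of $\mathcal{MT}(X)$ (resp.\ $\mathcal{MK}(X)$).

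Your extra point-set step fills in something the paper leaves implicit: via local path-connectedness, each component of $\mathcal{I}(X)$ is the image of a single path component of the space of complex structures. This also shows that no chains of translates are actually needed. One inconsistency to fix: what you construct is a surjection $\pi_0(\mathcal{I}(X))\to\pi_0(\mathcal{MT}(X))$, not the injection in the other direction announced in your first sentence.
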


\begin{proof}
 We may assume $\mathcal{MT}(X)\neq\varnothing$. By Theorem~\ref{thm:LZb1}, it follows that $\mathcal{MK}(X)\neq\varnothing$. As in the proof of Lemma \ref{lemma:uniontamed}, we thus know that every complex structure $J$ on $X$ is K\"ahler by Buchdahl--Lamari criterion. Consequently,
if $C$ is a connected component of $\mathcal I(X)$, by Lemma \ref{lem:isotopic},
\(
\big(\bigcup_{J\in C}\mathcal{ST}_J(X)\big)/\Df(X)
\)
is connected, while Lemma \ref{lemma:uniontamed} implies that
\(
\big(\bigcup_{J\in C}\mathcal{SK}_J(X)\big)/\Df(X)
\)
is connected. Thus, each connected component of $\mathcal I(X)$ contributes at most one connected component to $\mathcal{MT}(X)$ and to $\mathcal{MK}(X)$. Therefore, we have the upper bound.
\end{proof}

It follows from the following theorem of Friedman--Morgan that
$\#\pi_0(\mathcal I(X))$ is always finite whenever $X$ underlies a K\"ahler surface.

\begin{theorem}[{\cite[Theorem~S.2]{FMbook}}]\label{thm:FM main}
Let $X$ be a smooth closed oriented $4$-manifold with
$b_1(X)\neq 1$. Then there are only finitely many deformation-equivalence
classes of complex surfaces orientation-preservingly diffeomorphic to $X$.
\end{theorem}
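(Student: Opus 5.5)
The plan is to run through the Enriques--Kodaira classification and show that, once the diffeomorphism type of $X$ is fixed, each class of surfaces contributes only finitely many deformation types. The hypothesis $b_1(X)\neq 1$ removes class $\mathrm{VII}$ surfaces, the one region where the classification is incomplete. So every complex surface $S$ diffeomorphic to $X$ is either K\"ahler ($b_1$ even) or a non-K\"ahler elliptic/Kodaira surface ($b_1$ odd and $\geq 3$). The first step is to reduce to minimal surfaces. A surface with Kodaira dimension $\geq 0$ has a unique minimal model $S_{\min}$, and $S$ is obtained from it by $k$ blowups. Here $k=b_2(X)-b_2(S_{\min})$, and $b_2(S_{\min})$ is bounded in terms of the invariants of $X$. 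Blowing up points is deformation invariant, since the configuration space of points is connected, and iterated blowups deform to blowups at distinct points. Hence it suffices to bound the deformation types of the possible minimal models $S_{\min}$.

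The second step is to show that the Kodaira dimension, and the classes $\pm K_{S_{\min}}$ up to finite ambiguity, are determined by the oriented diffeomorphism type. For this I would use Seiberg--Witten (or Donaldson) invariants: when $\kappa\geq 0$, the basic classes detect $\pm K$ of the minimal model and the exceptional classes, as in the argument behind \cite[Corollary~7.4.3]{Morganbook} quoted in Example~\ref{example:Tedi}. Consequently $K_{S_{\min}}^2$ and $\chi(\mathcal O_{S_{\min}})=(c_1^2+c_2)/12$ take only finitely many values.

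The third step treats each Kodaira dimension separately.
\begin{itemize}
\item For $\kappa=-\infty$, the surface is rational or ruled over a curve of genus $g=b_1/2$. Ruled surfaces over a fixed genus form finitely many deformation types, distinguished by the parity of the bundle, and rational surfaces are handled by their minimal models $\PP^2$ and the Hirzebruch surfaces.
\item For $\kappa=0$, the classification gives a finite list: K3, Enriques, tori, hyperelliptic and Kodaira surfaces. Each has finitely many deformation families, via the connectedness of the relevant period domains and Torelli-type results.
\item For $\kappa=2$, the invariants $K^2$ and $\chi$ are bounded, and Bombieri's theorem says $|5K|$ embeds the canonical model into a fixed projective space with fixed Hilbert polynomial. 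A Hilbert scheme of finite type has finitely many components, so there are finitely many deformation types.
\end{itemize}

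The case $\kappa=1$, elliptic surfaces, is where I expect the main obstacle. There one must bound the base genus, $\chi$, and above all the number and multiplicities $m_1,\dots,m_r$ of the multiple fibers. The base genus and $\chi$ are controlled by $b_1$ and the Euler characteristic. For the multiple fibers, I would use that the canonical class is $K=(\chi+2g-2)F+\sum(m_i-1)F_i$. Its divisibility and its Seiberg--Witten basic classes, which are multiples of the fiber class with coefficients governed by the $m_i$, are diffeomorphism invariants, and they should pin down the unordered multiset $\{m_i\}$ up to finite ambiguity. Once these data are fixed, elliptic surfaces with given $(g,\chi,\{m_i\})$ form finitely many deformation classes via logarithmic transformations and Weierstrass models. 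Combining the finiteness over all cases gives the theorem.
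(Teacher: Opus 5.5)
The paper does not prove this statement; it quotes it from Friedman--Morgan, so your sketch has to stand on its own. Its skeleton is the standard one and those parts are fine: using $b_1\neq 1$ to exclude class VII, reducing to minimal models, Bombieri/Gieseker boundedness for $\kappa=2$, period domains for $\kappa=0$, and the explicit lists for $\kappa=-\infty$. Step~2 is more than you need. The quantities $\chi(\mathcal O)=(e+\sigma)/4$ and $K_{S_{\min}}^2=2e(X)+3\sigma(X)+k$, with $0\le k\le b_2^-(X)$, are already bounded purely topologically, and smooth invariance of $\kappa$ plays no role in a finiteness statement. The genuine gap is $\kappa=1$. That case is the real content of the theorem, and there you only assert that Seiberg--Witten data ``should'' bound the multiple fibers.

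As proposed, that mechanism fails or is incomplete in identifiable subcases.

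(i) Take the non-K\"ahler elliptic surfaces you explicitly allow, with $b_1=2g+1\ge 3$. There the fiber class is zero in $H_2(X;\mathbb{Q})$, so $K\equiv(\chi+2g-2+\sum(1-1/m_i))F$ and every basic class is torsion. No divisibility statement can then bound the $m_i$, and homology alone need not detect them. What controls them is $\pi_1$. The group $\pi_1(X)$ maps onto the orbifold fundamental group of $(C;m_1,\dots,m_r)$, which for $\kappa=1$ is a cocompact Fuchsian group of signature $(g;m_1,\dots,m_r)$. The kernel is the image of $\pi_1(F)$, which is intrinsically the maximal normal abelian subgroup.

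(ii) For K\"ahler elliptic surfaces with $p_g=0$, i.e.\ $b_2^+=1$, basic classes are chamber-dependent, so they are not diffeomorphism invariants in the sense you use. For Dolgachev surfaces (logarithmic transforms of a rational elliptic surface at two fibers of coprime multiplicities $p,q\ge 2$), $\pi_1$ is trivial as well. This is exactly the case that requires a wall-crossing analysis of gauge-theoretic invariants (Donaldson invariants in Friedman--Morgan).

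(iii) Even for K\"ahler $X$ with $b_2^+\ge 2$, turning ``divisibility of $K$'' into a bound needs three ingredients you do not supply:
\begin{enumerate}
\item that $\pm K$ is singled out among the basic classes;
\item the divisibility of $F$ in $H_2(X;\ZZ)/\mathrm{Tor}$ in terms of the $m_i$;
\item the lower bound $\chi+2g-2+\sum(1-1/m_i)\ge 1/42$ when $\kappa=1$.
\end{enumerate}

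Finally, your closing claim is false when $\chi=0$. You say elliptic surfaces with given $(g,\chi,\{m_i\})$ form finitely many deformation classes. But principal elliptic bundles over a fixed curve of genus $g\ge 2$, with Chern classes of different divisibility, already give infinitely many topologically distinct surfaces with $\kappa=1$; compare primary Kodaira surfaces. So this extra discrete datum, and the monodromy of the Jacobian fibration, must also be bounded using the topology of $X$, for instance through the torsion of $H_1(X;\ZZ)$.
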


Therefore, Theorem~\ref{thm:connected} can convert the rigidity results about complex structures in Theorems \ref{thm:rigidity} and \ref{thm:FM main} into the following finiteness result for holomorphically tamed and K\"ahler-type forms.

\begin{corollary}\label{cor:general type deformation finite}
For every smooth closed oriented $4$-manifold $X$, the spaces
$\mathcal{MT}(X)$ and $\mathcal{MK}(X)$ have only finitely many
connected components; if $X$ admits a K\"ahler metric of negative sectional curvature, then they can only have at most two connected components.
\end{corollary}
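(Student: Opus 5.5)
The plan is to combine Theorem~\ref{thm:connected} with the finiteness statements for moduli of complex structures, Theorem~\ref{thm:FM main} and the marked strong rigidity of Theorem~\ref{thm:rigidity}. By Theorem~\ref{thm:connected}, both $\#\pi_0(\mathcal{MT}(X))$ and $\#\pi_0(\mathcal{MK}(X))$ are at most $\#\pi_0(\mathcal I(X))$. It therefore suffices to bound $\#\pi_0(\mathcal I(X))$ whenever $\mathcal{MT}(X)$ is nonempty; if $\mathcal{MT}(X)=\varnothing$, then also $\mathcal{MK}(X)=\varnothing$ and there is nothing to prove.

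\emph{Finiteness.} Assume $\mathcal{MT}(X)\neq\varnothing$. By Theorem~\ref{thm:LZb1}(1), some complex structure on $X$ is K\"ahler, so $b_1(X)$ is even and in particular $b_1(X)\neq 1$. By the Buchdahl--Lamari criterion, every complex structure on $X$ is then K\"ahler. I identify connected components of $\mathcal I(X)$ with deformation-equivalence classes of complex surfaces diffeomorphic to $X$ by an orientation-preserving diffeomorphism. Two structures in the same component of $\mathcal I(X)$ are connected, after pulling back by an element of $\Df(X)$, by a chain of paths of complex structures, so they are deformation equivalent. Hence the number of components of $\mathcal I(X)$ is bounded by the number of deformation classes, or at worst the components map finitely-to-one onto those classes. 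Theorem~\ref{thm:FM main} then gives finiteness.

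The main obstacle is this comparison between components of $\mathcal I(X)$ and Friedman--Morgan deformation classes. Deformation equivalence is defined through chains of families over connected bases. One has to check that such a family, trivialized smoothly by Ehresmann's theorem, gives a path in $\text{Teich}(X)$ up to the action of $\Df(X)$. This is the direction we need: deformation-equivalent structures lie in the same component of $\mathcal I(X)$, so the classes of Theorem~\ref{thm:FM main} surject onto $\pi_0(\mathcal I(X))$. I would verify this carefully, noting that local triviality of smooth proper submersions gives exactly this.

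\emph{Negative curvature.} If $X$ admits a K\"ahler metric of negative sectional curvature, I instead use the bound $\#\pi_0(\widetilde{\mathcal I}(X))\le 2$ recorded after Corollary~\ref{cor:rigidity}. By Theorem~\ref{thm:rigidity}, any complex structure $J'$ is pulled back to $\pm J$ by a diffeomorphism homotopic to the identity, so its class in $\widetilde{\mathcal I}(X)$ is one of at most two points. Since $\mathcal I(X)$ is a quotient of $\widetilde{\mathcal I}(X)$, we get $\#\pi_0(\mathcal I(X))\le 2$, and Theorem~\ref{thm:connected} yields the bound of two components for both $\mathcal{MT}(X)$ and $\mathcal{MK}(X)$.
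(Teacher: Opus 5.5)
Your proposal is correct and follows the paper's argument. Theorem~\ref{thm:connected} reduces everything to bounding $\#\pi_0(\mathcal I(X))$, which is finite by Theorem~\ref{thm:FM main} (with $b_1$ even from the existence of a K\"ahler structure) and at most two by the rigidity bound $\#\pi_0(\widetilde{\mathcal I}(X))\le 2$.

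You are more careful than the paper in checking, via Ehresmann trivializations, that Friedman--Morgan deformation classes surject onto $\pi_0(\mathcal I(X))$. However, drop your earlier sentence asserting the converse implication (same component $\Rightarrow$ deformation equivalent), since that direction alone does not give the bound.
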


\section{$4$-manifolds with $b_2^+=1$}\label{section:b+1}
In this section, we focus on symplectic $4$-manifolds $(X,\omega)$ with
$b_2^+(X)=1$. Recall that a \emph{deformation} between two symplectic
forms $\omega_0$ and $\omega_1$ is a path
$\{\omega_t\}_{t\in[0,1]}$ of symplectic forms connecting them. Such a
deformation is called an \emph{isotopy} if the cohomology class
\(
[\omega_t]\in H^2(X;\RR)
\)
is independent of $t$.

The manifold $X$ is said to be of \emph{Seiberg--Witten simple type} if
its Gromov--Taubes invariants vanish for every class
$A\in H_2(X;\ZZ)$ whose Gromov--Taubes moduli space has positive
expected dimension. Equivalently, every class $A$ with nonzero
Gromov--Taubes invariant satisfies
\(
A^2+c_1(\omega)\cdot A=0.
\)
When $X$ is not of Seiberg--Witten simple type, one obtains enough positive classes represented by embedded symplectic
surfaces, providing the classes needed for the inflation argument.
The following deformation-to-isotopy result was first proved in
\cite[Theorem~1.2]{deftoiso} using a parametrized version of the
inflation lemma. It was later observed in
\cite[Proposition~4.11]{LiLiuJDG} that the same conclusion holds for
all symplectic $4$-manifolds with $b_2^+(X)=1$.

\begin{theorem}[Deformation to isotopy \cite{deftoiso,LiLiuJDG}]\label{thm:deftoiso}
    Let $(X,\omega)$ be a symplectic 4-manifold with $b_2^+(X)=1$. Then any deformation between two cohomologous symplectic forms on X
may be homotoped through deformations with fixed endpoints to an isotopy.
\end{theorem}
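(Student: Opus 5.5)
The plan follows McDuff's parametrized inflation strategy. Let $\{\omega_t\}_{t\in[0,1]}$ be a deformation with $[\omega_0]=[\omega_1]=a$, and choose a smooth path $\{J_t\}$ of almost complex structures with $J_t$ tamed by $\omega_t$. The goal is a two-parameter family $\omega_{t,s}$, $s\in[0,1]$, with three properties:
\begin{itemize}
\item $\omega_{t,0}=\omega_t$;
\item $\omega_{t,s}=\omega_t$ for $t=0,1$ up to a rescaling that can be undone;
\item $[\omega_{t,1}]=\lambda_t a$ for some positive function $\lambda_t$ with $\lambda_0=\lambda_1=1$.
\end{itemize}
Rescaling $\omega_{t,1}$ by $1/\lambda_t$ then gives an isotopy homotopic rel endpoints to the original deformation. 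All forms will be produced by inflation, so they stay $J_t$-tame, and the homotopy lies in $\mathcal S(X)$.

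\textbf{Step 1 (curve classes with nonzero invariant).} Since $b_2^+(X)=1$, the forward positive cone is well defined, and the path of classes $[\omega_t]$ stays in one component of it. By Taubes' SW$=$Gr together with the $b_2^+=1$ wall-crossing formula, every class $A$ in the closure of this component with $A^2$ and $[\omega_t]\cdot A$ sufficiently large relative to $K\cdot A$ has $\mathrm{Gr}(A)\neq0$. This holds for all $t$ at once, because the chamber does not change along the path. This is exactly the observation of \cite{LiLiuJDG} that removes the non-simple-type hypothesis of \cite{deftoiso}: the required classes come from wall-crossing, not from positive-dimensional Gromov--Taubes invariants of $\omega$ itself.

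\textbf{Step 2 (choice of target classes).} For each $t$, choose $\lambda$ large and a rational class $A$ close to a multiple of $\lambda a-[\omega_t]$. Because $a$ and $[\omega_t]$ lie in the same component of the positive cone, for $\lambda\gg1$ we get $A^2>0$ and $A$ satisfies the hypotheses of Step 1. By openness and compactness of $[0,1]$, finitely many intervals $U_j$ suffice, each with a fixed class $A_j$ (or a fixed small set of classes). The cone spanned by $A_j$ and $[\omega_t]$ contains the ray through $a$ for every $t\in U_j$.

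\textbf{Step 3 (parametrized inflation).} For generic $\{J_t\}$, $\mathrm{Gr}(A_j)\neq0$ gives, for each $t\in U_j$, a $J_t$-holomorphic representative of $PD(A_j)$ passing through $\tfrac12(A_j^2-K\cdot A_j)$ generic points. Inflating $\omega_t$ along it by $\kappa\,\eta_t$, where $\eta_t$ is a Thom-type form supported near the curve, gives a $J_t$-tame form in the class $[\omega_t]+\kappa A_j$. This is valid for all $\kappa\ge0$ since $A_j^2\ge0$.

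\textbf{Step 4 (gluing).} The inflation forms are glued across overlapping intervals with a partition of unity in $t$, exactly as in the proofs of Lemma~\ref{lem:isotopic} and Lemma~\ref{lemma:uniontamed}. This works because positive combinations of $J_t$-tame forms are $J_t$-tame. The inflation parameters are chosen so that the resulting class is proportional to $a$, and so that the parameters vanish at $t=0,1$, where no change of class is needed.

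\textbf{Main obstacle.} The hardest step is Step 3: producing, continuously in $t$, $J_t$-holomorphic representatives of $A_j$ along which one can inflate. In a one-parameter family the curves may become reducible or nodal at isolated parameter values. At these values one must use McDuff's inflation along nodal curves whose components all have nonnegative self-intersection, or else arrange, by a dimension count with many point constraints, that the only degenerations have such components. A related subtlety is that the inflation forms depend on $t$ only piecewise smoothly. I would handle this by inflating on each interval of the finite cover separately, interpolating the resulting forms convexly at the overlaps as above, and then smoothing the resulting family.
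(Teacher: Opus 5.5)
Your outline follows the argument the paper invokes by citation: McDuff's parametrized inflation along $J_t$-holomorphic curves in classes with nonzero Gromov--Taubes invariant, extended to every $b_2^+=1$ manifold by Li--Liu's wall-crossing observation. The paper supplies nothing beyond these references.

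Three small corrections:
\begin{enumerate}
\item The classes in your Step~1 have positive expected dimension and nonzero Gromov--Taubes invariant. So wall-crossing shows that $X$ is \emph{not} of Seiberg--Witten simple type: McDuff's hypothesis is met, not circumvented.
\item To land exactly on $\mathbb{R}_{>0}a$, each interval needs several classes whose positive span contains $\lambda_t a-[\omega_t]$. A single $A_j$ together with $[\omega_t]$ spans only a $2$-dimensional cone, which does not contain $a$ in general.
\item The endpoint inflation parameters need not vanish, and with $\lambda$ large they cannot. This is harmless: $\sum_j\kappa_j(0)A_j$ is then a multiple of $a$, so the endpoint segments are isotopies.
\end{enumerate}
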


Using the preceding result, we can now provide answers to questions \ref{question:sympST}, \ref{question:open} and \ref{question:unique} in the case $b_2^+=1$ by a straightforward argument.

\begin{theorem}\label{thm:b+=1 main}
    Let $X$ be the underlying smooth closed $4$-manifold of a (not necessarily minimal) K\"ahler surface with $b_2^+(X)=1$. Then, we have the following. 
    \begin{enumerate}
    \item $\mathcal{K}_{\text{Int}}^t(X)=\mathcal{K}_{\text{Int}}^c(X)$ and $\mathcal{ST}(X)=\mathcal{SK}(X).$
    \item $\mathcal{SK}(X)\hookrightarrow\mathcal{S}(X)$ is open.
    \item \(\#\mathcal{MK}_a(X)\leq \#\pi_0(\widetilde{\mathcal{I}}_a(X))\) for any $a\in\mathcal{K}_{\text{Int}}^c(X).$
    \end{enumerate}
\end{theorem}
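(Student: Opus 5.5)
The plan is to treat the three items in order, taking the cohomological statement of Theorem~\ref{thm:LZb1} and the deformation-to-isotopy Theorem~\ref{thm:deftoiso} as the main inputs.

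\textbf{Item (1).} Fix an integrable $J$ with $\mathcal{ST}_J(X)\neq\varnothing$. By Theorem~\ref{thm:LZb1}(1), $\mathcal K_J^c(X)\neq\varnothing$. Since $b_2^+(X)=1$ and a K\"ahler class is a positive $(1,1)$ class, we get $p_g=0$, so $H_J^-(X)=0$. Theorem~\ref{thm:LZb1}(2) then gives $\mathcal K_J^t(X)=\mathcal K_J^c(X)$, and taking the union over $J$ yields $\mathcal K^t_{\text{Int}}=\mathcal K^c_{\text{Int}}$.

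For the form-level statement, take $\omega\in\mathcal{ST}_J(X)$ with class $a$. Then $a\in\mathcal K_J^c(X)$, so we can choose a $J$-K\"ahler form $\omega'$ with $[\omega']=a$. Both forms tame $J$, so the linear path between them stays inside $\mathcal{ST}_J(X)$ and is cohomologically constant. Moser's argument then gives $f\in\Dff(X)$ with $\omega=f^*\omega'$. Hence $\omega$ is compatible with the integrable structure $f^*J$, and $\omega\in\mathcal{SK}(X)$.

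\textbf{Item (2).} Let $\omega_0\in\mathcal{SK}_J(X)$. Since $\mathcal{ST}_J(X)$ is open in $\mathcal S(X)$ and is contained in $\mathcal{ST}(X)=\mathcal{SK}(X)$ by item (1), it is an open neighbourhood of $\omega_0$ inside $\mathcal{SK}(X)$. This proves openness. The deformation-to-isotopy theorem is not even needed at this step.

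\textbf{Item (3).} Let $\omega_0,\omega_1\in\mathcal{SK}_a(X)$ be compatible with $J_0,J_1$. Suppose first that $J_0,J_1$ give the same component of $\widetilde{\mathcal I}_a(X)$. Modifying $J_1$ by an element of $\Dfh(X)$ (and pulling back $\omega_1$, which preserves the class), we may assume $J_0,J_1$ are joined by a path $J_t$ of complex structures.
\begin{itemize}
\item Lemma~\ref{lem:isotopic} (first part) gives a path of symplectic forms from $\omega_0$ to $\omega_1$, a deformation between cohomologous forms.
\item Theorem~\ref{thm:deftoiso} homotopes this deformation to an isotopy.
\item Moser's theorem then gives $\phi\in\Dff(X)$ with $\phi^*\omega_1=\omega_0$.
\end{itemize}
Therefore the classes of $\omega_0$ and $\omega_1$ in $\mathcal{MK}_a(X)=\mathcal{SK}_a(X)/\Dfh(X)$ coincide. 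The assignment from components of $\widetilde{\mathcal I}_a(X)$ to $\mathcal{MK}_a(X)$ is surjective, since every $\omega$ is compatible with some $J$ in $\widetilde{\mathcal I}_a$. This gives the stated bound.

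\textbf{Main obstacle.} The delicate step is the reduction in item (3) from "same component of $\widetilde{\mathcal I}_a(X)=\text{Teich}/\Gamma_h$" to an honest path of complex structures. Teichm\"uller space is non-Hausdorff, so I would argue via path components, assuming components are path components (locally, Kuranishi families are path connected). One must also check that pulling back by $\Dfh$ keeps the cohomology class $a$ fixed, which holds because $\Dfh$ acts trivially on homology.

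Note that the intermediate $J_t$ need not have $a$ as a K\"ahler class. This is exactly why a mere deformation is obtained, and why the deformation-to-isotopy theorem is essential here.
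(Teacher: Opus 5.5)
Your proposal follows the paper's proof essentially step for step:
\begin{itemize}
\item item (1) via Theorem~\ref{thm:LZb1}, a linear interpolation and Moser;
\item item (2) from the openness of $\mathcal{ST}_J(X)$ combined with item (1);
\item item (3) via a path of complex structures, Lemma~\ref{lem:isotopic}, Theorem~\ref{thm:deftoiso} and Moser.
\end{itemize}
The paper also passes from ``same component of $\widetilde{\mathcal I}_a(X)$'' to an honest path of complex structures without comment. So your caveat about path components is an extra remark, not a divergence.

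The one step you omit is a hypothesis of Lemma~\ref{lem:isotopic}. It requires $\mathcal{ST}_{J_t}(X)\neq\varnothing$ for \emph{every} $t$ along the path. You correctly note that the intermediate $J_t$ need not have $a$ as a K\"ahler class, but you never show they are K\"ahler at all. For non-K\"ahler complex structures the lemma gives nothing, so this must be checked. The paper supplies it with the Buchdahl--Lamari criterion: $b_1(X)$ is even because $X$ carries a K\"ahler structure, so every complex structure on $X$, in particular each $J_t$, is K\"ahler. Alternatively, you could invoke Lemma~\ref{lemma:uniontamed} instead, whose proof already builds this in. With that line added, your argument is complete.
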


\begin{proof}
    (1) By Theorem \ref{thm:LZb1}, $\mathcal{K}_J^t(X)=\mathcal{K}_J^c(X)$ for any integrable $J$. Taking the union over all complex structures, one has $\mathcal{K}_{\text{Int}}^t(X)=\mathcal{K}_{\text{Int}}^c(X).$
    To see $\mathcal{ST}(X)=\mathcal{SK}(X)$, it suffices to prove $\mathcal{ST}(X)\subseteq\mathcal{SK}(X)$. Let $\omega$ be a holomorphically tamed symplectic form, and suppose that
\(
\omega\in\mathcal{ST}_J(X)
\)
for some complex structure $J$. By the first item of Theorem~\ref{thm:LZb1}, the $J$-compatible cone $\mathcal K_J^c(X)$ is nonempty. Moreover, the second item implies that there exists
\(
\omega'\in\mathcal{SK}_J(X)
\)
such that
\(
[\omega']=[\omega].
\)
Consider the linear interpolation
\(
\omega_t:=(1-t)\omega+t\omega'.
\)
Since both $\omega$ and $\omega'$ tame $J$, every $\omega_t$ also tames $J$. Thus, $\{\omega_t\}$ is an isotopy from $\omega$ to $\omega'$. By Moser's stability, there exists a diffeomorphism
\(
\phi\in\Dff(X)
\)
such that
\(
\phi^*\omega'=\omega.
\)
Since $\omega'$ is K\"ahler with respect to $J$, the form $\omega$ is K\"ahler with respect to $\phi^*J$. Therefore,
\(
\omega\in\mathcal{SK}_{\phi^*J}(X)\subseteq\mathcal{SK}(X).
\)

(2) Since tameness with respect to a fixed complex structure $J$ is an open condition, the inclusion
\(
\mathcal{ST}(X)\hookrightarrow\mathcal{S}(X)
\)
is open. By item~(1), we have
\(
\mathcal{ST}(X)=\mathcal{SK}(X).
\)
Therefore, the inclusion
\(
\mathcal{SK}(X)\hookrightarrow\mathcal{S}(X)
\)
is also open.

(3) Let
\(
\omega\in\mathcal{SK}_J(X), 
\omega'\in\mathcal{SK}_{J'}(X)
\)
be two K\"ahler-type symplectic forms representing the class
\(
[\omega]=[\omega']=a.
\)
By the definitions of $\widetilde{\mathcal I}_a(X)$ and $\mathcal{MK}_a(X)$, it suffices to consider the case in which $J$ and $J'$ are connected by a path of complex structures and to show that there exists
\(
\phi\in\Dfh(X)
\)
such that
\(
\phi^*\omega'=\omega.
\) 

Since $X$ admits a K\"ahler structure $(\omega,J)$, $b_1(X)$ is even. By the Buchdahl--Lamari criterion, every complex structure along the path is therefore K\"ahler, and hence its integrable tamed cone is nonempty. We may thus apply Lemma \ref{lem:isotopic} or \ref{lemma:uniontamed} to connect $\omega$ and $\omega'$ by a path of symplectic forms. Since $b_2^+(X)=1$ and the endpoints are cohomologous, Theorem~\ref{thm:deftoiso} implies that this deformation can be homotoped, with fixed endpoints, to an isotopy. Moser's stability then yields a diffeomorphism $\phi$ isotopic to the identity such that
\(
\phi^*\omega'=\omega.
\)
In particular, $\phi$ acts trivially on homology, so $\phi\in\Dfh(X)$.

\end{proof}

\begin{rmk}\label{rmk: alt proof of connectedness}
Item~(1) above also yields an alternative proof of the statement in Theorem~\ref{thm:connected} concerning K\"ahler-type forms, without invoking the Kodaira--Spencer stability theorem in the proof of Lemma \ref{lemma:uniontamed}. Indeed, Lemma \ref{lem:isotopic} still gives
\(
\#\pi_0(\mathcal{MT}(X))
\leq
\#\pi_0(\mathcal{I}(X)),
\)
and the equality
\(
\mathcal{MT}(X)=\mathcal{MK}(X)
\)
then implies the corresponding inequality for $\mathcal{MK}(X)$.
\end{rmk}

\begin{rmk}
Item~(2) above admits the following higher-dimensional generalization that appears in the discussion following
\cite[Theorem~2]{FPPZ}, where it is attributed to Professor Dietmar Salamon: for any closed smooth manifold $X$, the inclusion
\[
\bigcup_{h_J^{2,0}=0}
\mathcal{SK}_J(X)
\hookrightarrow
\mathcal S(X)
\]
is open.
Indeed, if $h_J^{2,0}=0$, then the K\"ahler cone $\mathcal K_J^c(X)$ is open in
$H^2(X;\RR)$. Let $\omega\in\mathcal{SK}_J(X)$. By the openness of the
$J$-taming condition and the continuity of the cohomology class map in
Lemma~\ref{lem: continuity of the class map}, there exists a
neighborhood
\(
\mathcal U\subseteq\mathcal S(X)
\)
of $\omega$ such that every $\omega'\in\mathcal U$ tames $J$ and
satisfies
\(
[\omega']\in\mathcal K_J^c(X).
\)
Thus, for every $\omega'\in\mathcal U$, there exists a cohomologous $J$-K\"ahler
form $\omega''$.
Since both $\omega'$ and $\omega''$ tame $J$, their linear interpolation yields a Moser isotopy.
Thus, $\omega'$ is K\"ahler with respect to $f^*J$ for some $f\in\Dff(X)$ and hence $\mathcal{U}\subseteq \mathcal{SK}(X)$.
\end{rmk}

\begin{rmk}
    The result by Catanese in \cite{Catanesecanonical} suggests that Manetti surfaces \cite{Manetti} might provide examples in which the inequality  \(\#\mathcal{MK}_a(X)\leq \#\pi_0(\widetilde{\mathcal{I}}_a(X))\) in item (3) is strict, where $a$ is taken to be the canonical class $K_X$. 
\end{rmk}

We now apply Theorem \ref{thm:b+=1 main} to derive more explicit results for certain $4$-manifolds with $b_2^+=1$ whose moduli spaces of complex structures are well understood.

For Calabi--Yau K\"ahler surfaces $X$ with $b_2^+(X)=1$, classification of complex surfaces tells us that $X$ must be either an Enriques surface or a hyperelliptic surface. Enriques surfaces have the unique diffeomorphism type which can be obtained from the quotient of K3 surfaces by a free $\ZZ_2$-involution \cite{Enriquesbook}. Hyperelliptic surfaces are classified by Bagnera-de Franchis \cite{BdF1908} and Enriques-Severi \cite{EnriquesSeveri} to be quotients $(E\times F)/G'$ of the product between elliptic curves by a finite group \[G'\in\{\ZZ_2,\ZZ_2\oplus\ZZ_2,\ZZ_3,\ZZ_3\oplus\ZZ_3,\ZZ_4,\ZZ_2\oplus\ZZ_4,\ZZ_6\}.\] Suwa \cite{Suwahyperelliptic} later showed that these give rise to seven diffeomorphism types and described them as the quotient $A/G$ of the Abelian surface $A=(E\times F)/H$ by $G=G'/H$, where $H$ denotes the subgroup of $G$ that acts by translations on $E\times F$. $H$ is trivial when $G'=\ZZ_2,\ZZ_3,\ZZ_4,\ZZ_6$ and equal to the $\ZZ_2,\ZZ_3,\ZZ_2$-summand when $G'=\ZZ_2\oplus\ZZ_2,\ZZ_3\oplus\ZZ_3,\ZZ_2\oplus\ZZ_4$ respectively.
Let $\Lambda:=\pi_1(A)\cong H_1(A;\ZZ)$. To each diffeomorphism type $X=A/G$, one can associate an exact sequence 
\[1\rightarrow \Lambda\rightarrow\Gamma\rightarrow G\rightarrow 1,\]
where $\Gamma:=\pi_1(X)$ is called an abstract Euclidean cristallographic group in \cite[Definition 2]{Catanesehyperelliptic}.

\begin{corollary}\label{cor:Enriques hyperellpitic uniqueness}
    Let $X$ be the underlying smooth oriented $4$-manifold of an Enriques surface, a hyperelliptic surface or their blowups. Then
    \(
        \#\mathcal{MK}_a(X)\leq 1
    \)
    for every $a\in H^2(X;\RR)$. In other words, any two cohomologous
    symplectic forms of K\"ahler type on $X$ are related by a homologically trivial diffeomorphism.
\end{corollary}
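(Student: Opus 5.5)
The plan is to reduce to item~(3) of Theorem~\ref{thm:b+=1 main}. Enriques surfaces and hyperelliptic surfaces both have $b_2^+=1$ (Enriques: $p_g=0$, $b_2=10$, signature $-8$; hyperelliptic: $p_g=0$, $b_1=2$, $b_2=2$). Blowing up does not change $b_2^+$, so their blowups also have $b_2^+(X)=1$. For $a\notin\mathcal K^c_{\text{Int}}(X)$ the space $\mathcal{MK}_a(X)$ is empty and there is nothing to prove. Otherwise item~(3) gives $\#\mathcal{MK}_a(X)\le\#\pi_0(\widetilde{\mathcal I}_a(X))$. So it suffices to show that all complex structures $J$ on $X$ with $a\in\mathcal K_J^c(X)$ lie in a single connected component of $\widetilde{\mathcal I}(X)$. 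This means that any two such structures must be joinable by a path of complex structures, up to the action of $\Dfh(X)$.

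\textbf{Minimal Enriques case.} By the Friedman--Morgan style classification, every complex surface diffeomorphic to an Enriques surface is an Enriques surface. The moduli of Enriques surfaces is irreducible (it is a quotient of a connected period domain), so $\mathcal I(X)$ is connected. To pass to $\widetilde{\mathcal I}(X)$ I need the monodromy of the family to realize every element of the image of $\Df(X)\to \operatorname{Aut}H^2$ that can relate two structures sharing a K\"ahler class. I would argue via the universal K3 cover and its Torelli theorem. Alternatively, and more simply, I would use that cohomology classes carry orientation data: two Enriques structures with a common K\"ahler class $a$ can be connected in the marked period domain of the covering K3 while keeping $a$ (pulled back) of type $(1,1)$. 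The set of periods orthogonal to a fixed positive class is connected once we restrict to the anti-invariant part of the lattice, which has signature $(2,10)$. This yields a path of complex structures, and the marking gives the $\Dfh$ ambiguity only.

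\textbf{Minimal hyperelliptic case.} Here I would use Suwa's and Catanese's description: complex structures on a fixed diffeomorphism type $A/G$ are parametrized by pairs of elliptic curves $(E,F)$ with the $G$-action. The marked parameter space is a product of upper half planes (or points, for the $j=0,1728$ factor), hence connected. Thus any two complex structures are joined by a path, after composing with a diffeomorphism. That diffeomorphism can be chosen to act trivially on $H^2(X;\RR)\cong\RR^2$ up to sign; the sign is fixed by requiring that the common class $a$ be K\"ahler for both structures.

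\textbf{Blowups.} For a blowup, every complex structure is a blowup of a minimal model in the same family, since the $(-1)$-curves are determined by the Seiberg--Witten basic classes when the Kodaira dimension is $0$. Blowing up points can be done in families, and the space of configurations of points is connected, so paths downstairs lift. The main obstacle is the homological bookkeeping. The exceptional classes $E_i$ of two structures may differ by a permutation and signs. I would handle this using that $a\cdot E_i>0$ for K\"ahler forms, together with the fact that permutations of the $E_i$ are realized by diffeomorphisms obtained by moving blown-up points around each other. This ensures that the final identification lies in $\Dfh(X)$ and that $\widetilde{\mathcal I}_a(X)$ is connected.
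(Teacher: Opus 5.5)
Your reduction to Theorem~\ref{thm:b+=1 main}(3) and your overall plan follow the paper's route: periods plus Torelli for Enriques surfaces, the Suwa--Catanese parametrization for hyperelliptic surfaces, and blowup families plus connectedness of ordered configurations. Your handling of signs and orderings of exceptional classes in the blowup step is more explicit than the paper's.

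However, the Enriques step rests on a false claim. The periods of the K3 cover lie in $\mathbb P(L_-\otimes\CC)$ with $L_-$ of signature $(2,10)$. This domain is $G_2^{+,\text{or}}(L_-\otimes\RR)$, which has \emph{two} components interchanged by $J\mapsto -J$ (i.e.\ $\varphi\mapsto\bar\varphi$), just like $\overline{M}_\kappa$ in Section~\ref{section:torelli}. Orthogonality to $\pi^*a\in L_+$ is automatic, so it selects nothing.

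You therefore still have to show that two structures $J_1,J_2$ with common K\"ahler class $a$ have periods in the same component, and this is where $a$ really enters. Apply Lemma~\ref{lem:plus or minus} on the K3 cover with $\kappa=\pi^*a$: otherwise $-\tilde J_2$ would have period $\bar\varphi_2$ in the component of $\varphi_1$ while $-\kappa$ is K\"ahler for it.

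The hyperelliptic parameter space likewise has two components once the orientation is fixed. Your sign fix handles this on $H^2$. But since $b_1(X)=2$, acting trivially on $H^2(X;\RR)$ does not by itself put the connecting diffeomorphism in $\Dfh(X)$. You need to keep the marking of $\Lambda=\pi_1(A)$, as the paper does by working with $G$-invariant linear complex structures on $V=\Lambda\otimes\RR$.

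Second, the sentence ``the marking gives the $\Dfh$ ambiguity only'' is exactly the global Torelli theorem for marked Enriques surfaces (Namikawa's, which the paper invokes). It applies only when the identity of $H^2(X;\ZZ)$ carries K\"ahler cone to K\"ahler cone at the common period. After deforming $J_1,J_2$ to $J_1',J_2'$ with equal period, $a$ need no longer be K\"ahler. To fix this, take the common period generic (no nodal curves). Then each K\"ahler cone is an entire component of the positive cone, namely the one containing $a$, since that component is locally constant along the deformation.

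If you prefer to go through the K3 Torelli theorem instead, you must also check that the homologically trivial biholomorphism $g$ of the covers descends to $X$. This holds because $\iota$ and $g\iota g^{-1}$ are automorphisms of the same K3 surface acting identically on $H^2$, hence equal. With these repairs your argument goes through along the same lines as the paper's.
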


\begin{proof}
     By Theorem \ref{thm:b+=1 main} (3), it suffices to show that $\#\pi_0(\widetilde{\mathcal{I}}_a(X))\leq 1$. We first consider the Enriques manifold $X$. For any two complex structures $J_1,J_2$ on $X$ such that $a\in \mathcal{K}_{J_1}^c(X)\cap\mathcal{K}_{J_2}^c(X)$, since the underlying manifold $X$ is fixed, we have a canonical identification of the markings on two Enriques surfaces $(X,J_1)$ and $(X,J_2)$. By the connectedness of the period domain proved in \cite[Corollary 1.16]{Namikawa}, we may assume that $J_1,J_2$ can be deformed to some $J_1',J_2'$ with the same period. By the global Torelli theorem of marked Enriques surfaces \cite[Theorem 4.10, Remark 4.11]{Namikawa}, since the identity map on $H^2(X;\ZZ)$ preserves both the period and the K\"ahler class, it can be realized as a homologically trivial biholomorphism from $(X,J'_1)$ to $(X,J'_2)$. It follows that $\widetilde{\mathcal{I}}_a(X)$ is connected.
    
    We next consider a hyperelliptic manifold $X=A/G$ where $G$ is
    cyclic of order $n\in\{2,3,4,6\}$. For any complex structure $J$ on $X$, let
    \(
        (A,\tilde{J})\rightarrow (X,J)
    \)
    be its Abelian surface cover. By identifying $(A,\tilde{J})$ with its Albanese torus $H^0(A;\Omega_A^1)^*/H_1(A;\ZZ)$, we may assume the complex structure $\tilde{J}$ descends from a $G$-invariant linear complex structure on the vector space $V:=\Lambda\otimes_\ZZ\RR$. Its real representation by $G$
    admits a canonical decomposition
    \(
       V=U\oplus W,
    \)
    where $U$ is the two-dimensional trivial representation and $W$
    is the nontrivial two-dimensional representation. A $G$-invariant linear complex structure $I$ on $V$ preserves the above
    decomposition and has the form
    \(
        I=I_U\oplus I_W.
    \)
    If $n=2$, both $I_U$ and $I_W$ vary through the space of complex
    structures on a real two-plane. If $n=3,4$, or $6$, the complex
    structure on $W$ is determined up to sign, while $I_U$ remains
    arbitrary. In either case, after fixing the orientation of $X$ and requiring the fixed class $a$ to be contained in the K\"ahler cone, the space of $G$-invariant linear complex structures on $V$ is connected: in Suwa's description it is
    parameterized by $\mathbb H^2$ for $G'=\ZZ_2,\ZZ_2\oplus\ZZ_2$ and by
    $\mathbb H$ for the remaining five types, where $\mathbb H=\{z\in \CC\,|\,\text{Im } z>0\}$; see
    \cite{Suwahyperelliptic,Tsuchihashi,Boa21}. This also follows from
    the description of the Teichm\"uller space as the $G$-fixed locus of
    $\text{Teich}(\mathbb T^4)$ in \cite[Theorem 1]{Catanesehyperelliptic}. This implies that at most one
    $\Dfh(X)$-equivalence class of deformation components of complex structures is relevant
    to a fixed class $a$. Therefore, for hyperelliptic surfaces, we also have the connectedness of $\widetilde{\mathcal{I}}_a(X)$.

The connectedness statement for the minimal surfaces extends to their
blowups. Indeed, suppose that $\widetilde{\mathcal I}_a(X)$ is connected,
and let
\(
X_k=X\#k\overline{\CC\PP}^{2}.
\) 
Every complex structure on $X_k$ is obtained by successively blowing up
a complex structure on $X$. Take some $\tilde{a}\in H^2(X_k;\RR)$ whose projection to $H^2(X;\RR)$ is $a$. Given two complex structures $I_0$ and
$I_1$ on $X_k$ such that $\tilde{a}\in\mathcal{K}_{I_0}^c(X_k)\cap \mathcal{K}_{I_1}^c(X_k)$, their minimal models can be connected by a path of
complex structures after composing with a
homologically trivial diffeomorphism of $X$. The associated blowup deformation family (see Section \ref{section:cpx str on blowup}),
together with the connectedness of the space of choices of blowup
points (Lemma \ref{lem: moving blowup point}), then gives a path connecting the corresponding blowup complex
structures. Consequently,
\(
\widetilde{\mathcal I}_{\tilde{a}}(X_k)
\)
is also connected.

\end{proof}

For surfaces of general type with $b_2^+(X)=1$, there is still a finiteness result for K\"ahler-type forms in a fixed class, parallel to Corollary \ref{cor:general type deformation finite}.

\begin{corollary}\label{cor:general type uniqueness}
    Let $X$ be the underlying smooth $4$-manifold of a (not necessarily minimal) K\"ahler surface of general type with $b_2^+(X)=1$. Then $\#\mathcal{MK}_a(X)$ is finite for any $a\in H^2(X;\RR)$.
\end{corollary}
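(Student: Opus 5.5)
The plan is to reduce the statement to Theorem~\ref{thm:b+=1 main}~(3), which gives
\[
\#\mathcal{MK}_a(X)\leq \#\pi_0(\widetilde{\mathcal I}_a(X))
\]
for every $a\in\mathcal K^c_{\text{Int}}(X)$. If $a\notin\mathcal K^c_{\text{Int}}(X)$, then $\mathcal{SK}_a(X)$ is empty and there is nothing to prove. It therefore suffices to show that $\widetilde{\mathcal I}(X)=\text{Teich}(X)/\Gamma_h(X)$ has only finitely many connected components, since $\widetilde{\mathcal I}_a(X)$ is a union of some of them.

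To bound $\#\pi_0(\widetilde{\mathcal I}(X))$, first note that $X$ is Kähler, so $b_1(X)$ is even, and in particular $b_1(X)\neq 1$. Theorem~\ref{thm:FM main} then says there are only finitely many deformation-equivalence classes of complex structures on $X$; in other words, $\#\pi_0(\mathcal I(X))<\infty$. Next, consider the natural surjection $\widetilde{\mathcal I}(X)\to\mathcal I(X)$, which is the quotient by the finite-index question $\Gamma(X)/\Gamma_h(X)$. The fibre of a component $C$ of $\mathcal I(X)$ consists of the components of $\widetilde{\mathcal I}(X)$ lying over $C$, and these are permuted transitively by the image of $\Gamma(X)$ in $\operatorname{Aut}(H_2(X;\ZZ))$. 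So the number of components over $C$ is at most the index of the stabilizer of one of them in this image group.

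The main obstacle is that the image of $\Gamma(X)$ in $\operatorname{Aut}(H^2(X;\ZZ))$ is in general an infinite arithmetic group, so the naive bound is useless. Here I will use the general type hypothesis together with Seiberg--Witten constraints: any diffeomorphism of a surface of general type preserves the set of Seiberg--Witten basic classes up to sign. For $b_2^+=1$ one uses the chamber determined by the canonical class (Friedman--Morgan), together with the fact that $(-1)$-classes are permuted. Concretely, after choosing a complex structure $J$ in $C$, the relevant data attached to a lift are the canonical class $K_{J'}$ (equal to $\pm K_J$ up to the action on exceptional classes) and the finite set of exceptional curve classes $E_1,\dots,E_k$, up to sign and permutation. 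Two lifts over $C$ that induce the same such data on $H^2$ should lie in the same $\Gamma_h$-orbit of components, and the set of possible data is finite because it is determined by the finitely many basic classes, which are $\pm K_{\min}\pm E_1\pm\cdots\pm E_k$.

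If this route fails, the fallback is to use the stronger fact that the image of $\Gamma(X)$ acting on the $\Gamma_h$-components over $C$ factors through the action on the finite set of Seiberg--Witten basic classes, which gives a finite group. Either way one gets $\#\pi_0(\widetilde{\mathcal I}(X))\leq N\cdot\#\pi_0(\mathcal I(X))<\infty$, where $N$ bounds the orbit sizes, and hence $\#\mathcal{MK}_a(X)<\infty$.
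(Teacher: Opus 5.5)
\textbf{Gap: the finiteness of $\Gamma(X)/\Gamma_h(X)$ is missing.} Your overall reduction matches the paper's. You apply Theorem~\ref{thm:b+=1 main}~(3) and get $\#\pi_0(\mathcal I(X))<\infty$ from Theorem~\ref{thm:FM main}. You then bound the number of components of $\widetilde{\mathcal I}(X)$ over a component $C$ of $\mathcal I(X)$ using $G:=\Gamma(X)/\Gamma_h(X)$, the image of $\Df(X)$ in $\operatorname{Aut}(H^2(X;\ZZ))$.

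The gap is in this last step. You declare $G$ to be ``in general an infinite arithmetic group'' and discard the naive bound. In its place you claim that two lifts of $C$ inducing the same canonical/exceptional data lie in the same $\Gamma_h(X)$-orbit of components. Your fallback, that the action of $G$ on these components factors through its action on basic classes, is the same claim. Nothing in your argument supports it. An element $g\in G$ fixing $K_{\min}$ and every $E_i$ can still act nontrivially on the rest of $H^2(X;\ZZ)$. There is no reason a diffeomorphism realizing such a $g$ should carry a deformation component of complex structures to itself modulo $\Dfh(X)$. Your ``should'' marks exactly the unproved step, and the whole finiteness rests on it.

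\textbf{What fixes it.} In this setting $G$ is finite, so the naive bound you discarded is exactly what works; this is the paper's argument, via \cite[Corollary~4.8]{FM97}.
\begin{enumerate}
\item Let $V\subseteq H^2(X;\RR)$ be spanned by $K_{\min}$ (the pullback of the canonical class of the minimal model) and the exceptional classes $E_1,\dots,E_k$. Then $K_{\min}\cdot E_i=0$, $E_i\cdot E_j=-\delta_{ij}$, and $K_{\min}^2>0$ because the minimal model is of general type. So $V$ has signature $(1,k)$, and since $b_2^+(X)=1$, $V^\perp$ is negative definite.
\item Seiberg--Witten theory forces every $g\in G$ to send $K_{\min}$ to $\pm K_{\min}$ and to permute $\{\pm E_1,\dots,\pm E_k\}$. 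Hence $G$ maps to a finite group of signed permutations.
\item The kernel of that map fixes $V$ pointwise. Modulo torsion, which contributes only finitely many automorphisms, such an element is determined by its restriction to the negative definite lattice $V^\perp\cap H^2(X;\ZZ)$. This lattice has full rank in $V^\perp$ and a finite automorphism group.
\end{enumerate}
Hence $|G|<\infty$, and at most $|G|$ components of $\widetilde{\mathcal I}(X)$ lie over each component of $\mathcal I(X)$. This gives $\#\pi_0(\widetilde{\mathcal I}_a(X))\le\#\pi_0(\widetilde{\mathcal I}(X))\le|G|\cdot\#\pi_0(\mathcal I(X))<\infty$, with no claim about stabilizers of components needed.
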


\begin{proof}
  Note that the group $\widetilde{\mathcal{I}}(X)/\mathcal{I}(X)\cong \Df(X)/\Dfh(X)$ can be identified with the image of $\Df(X)\rightarrow \text{Aut}(H^2(X;\ZZ))$. This group is known to be finite by \cite[Corollary 4.8]{FM97}: any diffeomorphism $f$ has to preserve the canonical class and exceptional classes up to signs by Seiberg--Witten theory, and the automorphism of their orthogonal complement, which is a negative definite space, is finite. Together with Theorem \ref{thm:FM main}, this implies $\#\pi_0(\widetilde{\mathcal{I}}(X))<\infty$ and hence  $\#\pi_0(\widetilde{\mathcal{I}}_a(X))<\infty$. Thus, the finiteness of $\#\mathcal{MK}_a(X)$ follows from Theorem \ref{thm:b+=1 main} (3).
\end{proof}

\section{$4$-manifolds with $b_2^+>1$: $K3$, $\mathbb{T}^4$, and their blowups}\label{section:k3}

We now turn to the more delicate $b_2^+>1$ case. In the absence of the `deformation-to-isotopy' Theorem~\ref{thm:deftoiso}, we must extract more refined information from the moduli spaces of complex structures.

\subsection{Period domain and Torelli Theorem}\label{section:torelli}

Let us begin by recalling some standard results from the Torelli theory of K3 surfaces and complex tori. We refer to \cite{BHPVbook,BurnsRapoport,Todorov,Siu,LooijengaK3,K3book,Shioda} for further details.

 Throughout this section, $X$ denotes the underlying smooth manifold of either a K3 surface or the four-torus $\mathbb T^4$. Denote by $(L,\langle\,,\,\rangle)$ either the cohomology lattice $L_{K3}=3H\oplus 2 E_8$ or $L_{\mathbb T^4}=3H$, where $H$ is the rank $2$ hyperbolic plane and $E_8$ is the negative-definite even unimodular lattice of rank $8$ associated to the $E_8$ root system. Let $L_{\mathbb R}, L_{\CC}$ be $L\otimes\RR=H^2(X;\RR),L\otimes \CC=H^2(X;\CC)$ respectively.

The period domain is defined to be 
$$\Phi=\{\varphi\in \mathbb P(L_{\mathbb C})\,|\,\langle \varphi, \varphi\rangle =0, \langle \varphi, \bar \varphi\rangle >0\},$$ 
which is an open subset of the non-singular quadric $\{\langle \varphi,\varphi\rangle=0\}$ in $\mathbb {CP}^{21}$ when $X=K3$; or $\mathbb{CP}^5$ when $X=\mathbb T^4$, hence a complex manifold of complex dimension $20$ or $4$ respectively.  Note that a point $\varphi\in \Phi$ determines a Hodge structure on $L_\mathbb C$:
$$H^{2, 0}=\mathbb C \varphi, \quad H^{0, 2}=\mathbb C \bar \varphi, \quad H^{1, 1}=(H^{2, 0}\oplus H^{0, 2})^\perp.$$
The period map is defined by
\[\textup{Per}:\{\text{complex structures on }X\}\rightarrow \Phi,\quad J\mapsto \text{Per}(J)\] 
where $\textup{Per}(J) $ is the point in $\Phi$ representing the complex line $H^{2, 0}_J(X)\subseteq L_\CC$. It can be considered as a surjective map from the space of marked $K3$ or $\mathbb T^4$, with the obvious marking on $(X,J)$ since we have fixed the underlying manifold $X$ and its cohomology lattice $L$, to the period domain $\Phi$. Hence $\Phi$ is a ``universal parameter space'' of all complex structures, although different points in the period domain may represent biholomorphic complex structures due to the  action by diffeomorphism group.  If $J,J'$ are two complex structures on $X$ such that $\textup{Per}(J)=\textup{Per}(J')$, then there exists some $f\in \textup{Diff}_+(X)$ which is a biholomorphism between $(X,J)$ and $(X,J')$. Note that $f$ may not belong to $\Dfh(X)$, as $\mathcal{K}_J^c(X)$ and $\mathcal{K}_{J'}^c(X)$ may differ by the action of Weyl group but $f^*$ needs to preserve them in the requirement of Torelli theorem.  If \{$\varphi(t)\}$ is a path in the period domain $\Phi$, using local Torelli theorem and a finite subdivision of the interval, one can choose compatible local lifts of the period path and thereby obtain a family of complex structures $\{J(t)\}$ realizing \(\{\varphi(t)\}\). Moreover, the initial $J(0)$ can be chosen to be any complex structure in $\textup{Per}^{-1}(\varphi(0))$.

There is an alternative description of the period domain $\Phi$ using Grassmannians. For a real vector space $V$ with a quadratic form, denote by $G_k^{+,\text{or}}(V)$ the set of all $k$-dimensional positive-definite linear subspaces of $V$ equipped with an orientation. Consider the ``plane'' map
\[P:\Phi\rightarrow G_2^{+,\text{or}}(L_\RR),\qquad \varphi\mapsto P(\varphi)\]
where $P(\varphi)$ is spanned by the ordered basis $\{\text{Re}(\theta),\text{Im}(\theta)\}\subseteq L_\RR$ for any $\theta\in L_\CC$ with $[\theta]=\varphi\in \Phi$. The Hodge-Riemann bilinear relations
 $\langle \varphi, \varphi\rangle =0, \langle \varphi, \bar \varphi\rangle >0$ in the definition of $\Phi$ guarantee that $P$ is well-defined and it is straightforward to verify $P$ is a bijection by constructing its inverse $P^{-1}(W)=[\alpha+i\beta]\in\Phi$, where $\{\alpha,\beta\}$ is an orthonormal basis of the two-plane $W\subseteq L_\RR$ whose order coincides with the given orientation. Henceforth, we will use these two equivalent descriptions of the period domain interchangeably.

Next, for any nonzero class $\kappa\in H^2(X;\RR)$, consider the codimension-one complex submanifold
\[\overline{M}_\kappa:=\{\varphi \in \Phi\,|\,\langle\varphi,\kappa\rangle=0\}\subseteq \Phi.\] 
  In terms of the Grassmannian description,
$\overline{M}_\kappa$ can be identified with $G_2^{+,\text{or}}(\kappa^{\perp})$. When $\kappa^2>0$, $\kappa^{\perp}$ has signature $(2,19)$ for K3; or $(2,3)$ for $\mathbb T^4$, and $G_2^{+,\text{or}}(\kappa^{\perp})$ can be further identified with the homogeneous space $SO(2,19)/\big(SO(2)\times SO(19)\big)$ or $SO(2,3)/\big(SO(2)\times SO(3)\big)$. Note that $SO(2,19)$ and $SO(2,3)$ have two connected components depending on whether the orientation on a maximal positive-definite subspace is preserved or not, while the stabilizers $SO(2)\times SO(19)$ and $SO(2)\times SO(3)$ are connected. Hence, $\overline{M}_\kappa$ has two connected components if $\kappa^2>0$.

Let  $\Delta_\kappa$ be the subset in $L$ defined by
$$\Delta_\kappa:=
    \{\dd\in H^2(X;\ZZ)\,|\,\dd^2=-2, \langle\dd,\kappa\rangle=0\}.$$ 
One can then consider 
\[M_\kappa:=\begin{cases}
    \overline{M}_\kappa\setminus \bigcup_{\dd\in\Delta_\kappa} \overline{M}_\dd,\, X=K3;\\
    \overline{M}_\kappa,\qquad \qquad \quad \,\,\,\,X=\mathbb T^4.
\end{cases}\] 

\begin{lemma} \label{lemma: connected complement}
Let $M$ be a connected smooth manifold, and let
\(
A=\bigcup_{i=1}^{\infty} Z_i
\)
be a countable union of smooth submanifolds $Z_i\subseteq M$ of real codimension at least $2$. Then $M\setminus A$ is path-connected.
\end{lemma}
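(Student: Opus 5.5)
We argue by transversality: a generic path between two points of $M\setminus A$ misses every $Z_i$, and a countable union of such conditions is still generic. Fix $p,q\in M\setminus A$. Since $M$ is a connected manifold, it is path-connected, so there is a path from $p$ to $q$. After a small perturbation fixing the endpoints, we may take this to be a smooth embedded or immersed arc $\gamma_0:[0,1]\to M$ with $\gamma_0(0)=p$ and $\gamma_0(1)=q$. The goal is to perturb $\gamma_0$, keeping the endpoints fixed, so that it avoids every $Z_i$.

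\textbf{Step 1: a finite-dimensional family of paths.} Cover $\gamma_0([0,1])$ by finitely many coordinate charts. Using bump functions supported in the interior $(0,1)$, construct a smooth map
\[
\Gamma:[0,1]\times B\to M,\qquad \Gamma(t,0)=\gamma_0(t),
\]
where $B\subseteq\RR^N$ is a small ball. We require $\Gamma(0,b)=p$ and $\Gamma(1,b)=q$ for all $b$, and we require $\Gamma$ to be a submersion on $[\epsilon,1-\epsilon]\times B$ for a suitable small $\epsilon$. A concrete choice is $\Gamma(t,b)=\exp_{\gamma_0(t)}\big(\chi(t)\sum_j b_jV_j(t)\big)$ for a Riemannian metric, a cutoff $\chi$, and vector fields $V_j$ along $\gamma_0$ that span $T_{\gamma_0(t)}M$ at each $t$.

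\textbf{Step 2: the endpoint pieces.} Since $p,q\notin A$, one might hope to choose $\epsilon$ so that $\gamma_0([0,\epsilon])\cup\gamma_0([1-\epsilon,1])$ misses $A$. This is the main obstacle. Each $Z_i$ is only a submanifold, not a closed set, so the union $A$ need not be closed, and a short segment near $p$ could still meet infinitely many $Z_i$ accumulating at $p$. To handle this, I would make $\Gamma$ a submersion on the half-open interval $(0,1)$ instead of on a compact subinterval. Take the cutoff $\chi>0$ on $(0,1)$, so that for each fixed $t\in(0,1)$ the map $b\mapsto\Gamma(t,b)$ is a local diffeomorphism onto a neighborhood of $\gamma_0(t)$. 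The endpoints themselves are fixed and lie outside $A$, so they never cause a problem.

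\textbf{Step 3: parametric transversality and countable intersection.} For each $i$, the restriction $\Gamma:(0,1)\times B\to M$ is a submersion, so $\Gamma^{-1}(Z_i)$ is a submanifold of codimension at least $2$ in the $(N+1)$-dimensional domain. Its projection to $B$ is the image of a manifold of dimension at most $N-1$ under a smooth map. By Sard's theorem, this projection has measure zero in $B$. Equivalently, by the parametric transversality theorem, for almost every $b$ the path $\Gamma(\cdot,b)|_{(0,1)}$ is transverse to $Z_i$; since $1+\dim Z_i<\dim M$, transversality means the path misses $Z_i$. A countable union of measure-zero sets has measure zero, so some $b\in B$ avoids all of them. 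For that $b$, the path $\Gamma(\cdot,b)$ runs from $p$ to $q$ inside $M\setminus A$, which proves that $M\setminus A$ is path-connected.

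In the later application, the $Z_i$ will be the hypersurfaces $\overline{M}_\dd\cap\overline{M}_\kappa$ of complex codimension one in $\overline{M}_\kappa$. These have real codimension $2$, so the lemma applies to each connected component of $\overline{M}_\kappa$.
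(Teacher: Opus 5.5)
Your proof is correct, but it reaches genericity by a different mechanism than the paper. The paper works in the infinite-dimensional Fr\'echet manifold $\mathcal P_{p,q}$ of smooth paths with fixed endpoints. It asserts that each $\mathcal U_i=\{\gamma \text{ transverse to } Z_i\}$ is open and dense, and concludes by the Baire category theorem.

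You instead build a finite-dimensional family $\Gamma:[0,1]\times B\to M$ with fixed endpoints that is a submersion on $(0,1)\times B$. Each bad parameter set is then the projection of a manifold of dimension $N+1-\operatorname{codim} Z_i\le N-1$, hence null. Countable subadditivity of Lebesgue measure replaces Baire.

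Your route is more elementary: it needs only finite-dimensional Sard, not transversality theory on the $C^\infty$ path space. It is also more robust. Since the $Z_i$ are not assumed closed, ``disjoint from $Z_i$'' is not an open condition on paths. The paper's openness claim for $\mathcal U_i$ therefore really requires either $Z_i$ closed, which holds in its applications such as $\overline M_\kappa\cap\overline M_\delta$, or an exhaustion of each $Z_i$ by countably many compact pieces. Your measure-zero argument needs no openness at all, which is exactly the issue you flagged in Step 2. The paper's version, in exchange, is shorter and invokes a standard off-the-shelf transversality statement.

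Two small points. You don't need $\gamma_0$ to be embedded or immersed. The submersion on $[\epsilon,1-\epsilon]\times B$ in Step 1 is superseded by Step 2: just take $\chi>0$ on $(0,1)$ and $B$ small enough that $d\exp$ is nonsingular along the compact curve.
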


\begin{proof}
Let $p,q\in M\setminus A$. 
Consider the path space
\[
\mathcal P_{p,q}=\{\gamma\in C^\infty([0,1],M)\,|\, \gamma(0)=p,\ \gamma(1)=q\}
\]
with the $C^\infty$-topology. Since $M$ is connected, $\mathcal{P}_{p,q}$ is a nonempty Fr\'echet manifold. For each $i$, let
\[
\mathcal U_i=\{\gamma\in \mathcal P_{p,q}\,|\, \gamma \text{ is transverse to } Z_i\}.
\]
By the standard transversality theorem, since the endpoints $p,q$ do not lie in $Z_i$, the set $\mathcal U_i$ is an open dense subset in $\mathcal P_{p,q}$.
Therefore, the countable intersection
\(
\bigcap_{i=1}^{\infty} \mathcal U_i
\)
is residual, hence nonempty, by the Baire category theorem. Choose
\(
\gamma\in \bigcap_{i=1}^{\infty} \mathcal U_i.
\)
Then $\gamma$ is transverse to every $Z_i$ and must be disjoint from every $Z_i$ by the codimension assumption. 
Thus
\(
\gamma([0,1])\subseteq M\setminus \bigcup_{i=1}^{\infty} Z_i.
\)
So $p$ and $q$ can be joined by a path inside the complement. Since $p,q\in M\setminus A$ were arbitrary, $M\setminus A$ is path-connected.
\end{proof}

When $\kappa^2>0$, the inclusion $\overline{M}_\kappa\cap\overline{M}_\dd\subseteq\overline{M}_\kappa$ can be viewed as \(G_2^{+,\text{or}}(\kappa^\perp\cap\dd^\perp)\subseteq G_2^{+,\text{or}}(\kappa^\perp)\). Since $\kappa^\perp\cap\dd^\perp$ has signature $(2,18)$ or $(2,2)$, we see that $\dim_\RR\overline{M}_\kappa\cap\overline{M}_\dd=36 \text{ or }4$; while $\dim_\RR\overline{M}_\kappa=38\text{ or }6$. Hence, this is a real codimension-two embedding. It then follows from Lemma \ref{lemma: connected complement} that $M_\kappa$ also has two connected components since only countably many real codimension-two submanifolds are removed from $\overline{M}_\kappa$ in the K3 case, and $\overline{M}_\kappa=M_\kappa$ in the $\mathbb T^4$ case.

Given any $\varphi\in \Phi$, one can define the projection map
\[Z_\varphi:L_\RR\rightarrow P(\varphi)^\perp \subseteq L_\RR,\qquad \kappa\mapsto Z_\varphi(\kappa)\]
 by using the intersection pairing on $L_\RR$. So $Z_{\varphi}(\kappa)$ is the $(1,1)$-part $\kappa_J^{1,1}\in H_J^{1,1}(X;\RR)$ of $\kappa$ with respect to any complex structure $J$ with $\text{Per}(J)=\varphi$. Then, for any $\kappa\in L_\RR$ with $\kappa^2>0$, we consider
\begin{align*}
    \Psi_\kappa^{>0}&:=\{\varphi \in \Phi\,|\, \langle Z_{\varphi}(\kappa), Z_{\varphi}(\kappa)\rangle>0\},\\
    \Psi_\kappa^{=0}&:=\{\varphi \in \Phi\,|\, \langle Z_{\varphi}(\kappa), Z_{\varphi}(\kappa)\rangle=0\},\\
    \Psi_\kappa^{*}&:=\{\varphi \in \Phi\,|\,  Z_{\varphi}(\kappa)=0\},\\
    \Psi_\kappa^{<0}&:=\{\varphi \in \Phi\,|\, \langle Z_{\varphi}(\kappa), Z_{\varphi}(\kappa)\rangle<0\}, \\
    \Phi_\kappa&:=\begin{cases}
    \Psi_\kappa^{>0}\setminus\bigcup_{\dd\in\Delta_\kappa}\overline{M}_\dd,X=K3;\\
    \Psi_\kappa^{>0},\qquad\qquad\,\,\,\,\,\,\,\,\, X=\mathbb T^4.
\end{cases}
\end{align*}

The following lemma records the relevant topological properties of these subdomains of $\Phi$. A schematic illustration is given in Figure~\ref{fig:period domain}.

\begin{lemma}\label{lem:Grassmannian}
Let $\kappa\in L_\RR$ be a class with $\kappa^2>0$. 
\begin{enumerate}
    \item $\Psi^{<0}_\kappa$ is a connected open subdomain of $\Phi$.
     \item $\Psi^{=0}_{\kappa}$ is a connected singular wall between $\Psi^{>0}_\kappa$ and $\Psi^{<0}_\kappa$, with $\Psi^*_\kappa$ being its singularity set.
    \item $\Psi_\kappa^{>0}$ (resp. $\Phi_\kappa$) is an open two-disk bundle over $\overline{M}_\kappa$ (resp. $M_\kappa$). In particular, $\Psi_\kappa^{>0}$ and $\Phi_\kappa$ are subdomains of $\Phi$ with exactly two connected components.
     \item For any $\varphi,\overline{\varphi}\in \overline{M}_\kappa$, let $\mathbb D_\varphi,\mathbb D_{\overline{\varphi}}$ be the disk fibers of the fibration $\Psi_\kappa^{>0}\rightarrow\overline{M}_{\kappa}$. Then $\overline{\mathbb D}_\varphi\setminus \mathbb D_\varphi=\overline{\mathbb D}_{\overline{\varphi}}\setminus \mathbb D_{\overline{\varphi}}$ is an embedded circle in $\Psi_{\kappa}^*$. Moreover, $\overline{\mathbb D}_\varphi\cup \overline{\mathbb D}_{\overline{\varphi}}$ corresponds to the periods of a hyperK\"ahler $S^2$-family.
\end{enumerate}
 
\end{lemma}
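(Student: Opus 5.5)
The plan is to work throughout in the Grassmannian model $\Phi\cong G_2^{+,\text{or}}(L_\RR)$. Write $\kappa^\perp$ for the orthogonal complement of $\kappa$, which has signature $(2,r)$ with $r=19$ or $3$. Given an oriented positive plane $W$, we have $Z_\varphi(\kappa)=\kappa-\mathrm{pr}_W\kappa$, and hence
\[
\langle Z_\varphi(\kappa),Z_\varphi(\kappa)\rangle=\kappa^2-|\mathrm{pr}_W\kappa|^2 .
\]
So the sign of this quantity is controlled by the single function $f(W)=|\mathrm{pr}_W\kappa|^2/\kappa^2$. Under the identification $\Phi\cong SO(3,r)/(SO(2)\times SO(1,r))$, the group $\mathrm{Stab}(\kappa)\cong O(2,r)$ acts, and $f$ is invariant under this action.

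I will parametrize $W$ as follows. Write $\hat\kappa=\kappa/|\kappa|$ and choose a unit vector $u\in W$ realizing the projection of $\kappa$, so that $\mathrm{pr}_W\kappa=\langle\kappa,u\rangle u$. Decompose $u=\cos\theta\,\hat\kappa+\sin\theta\, v$ with $v\in\kappa^\perp$, and let $w$ be the unit vector spanning $W\cap\kappa^\perp$ orthogonal to $u$. Then $f=\cos^2\theta$, with the caveat that $v$ is only a unit vector for the indefinite form.

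\textbf{Item (3).} The condition $\Psi^{>0}_\kappa$ means $f<1$. I will define a retraction $\Psi^{>0}_\kappa\to\overline M_\kappa$ by sending $W$ to the plane obtained by "rotating $\kappa$ out." Concretely, replace $u$ by the normalization of $u-\langle u,\hat\kappa\rangle\hat\kappa$; the condition $f<1$ is exactly what guarantees that this vector is positive. The orientation is kept. The fiber over $W_0\subset\kappa^\perp$ should be the set of planes spanned by $w$ and $\cos\theta\hat\kappa+\sin\theta v$, with $v$ a unit vector in $W_0$, $w$ the vector in $W_0$ orthogonal to $v$, and $\theta\in[0,\pi/2)$. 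This set is an open disk with polar coordinates $(\theta,v)$. I would verify that this is a locally trivial bundle by using the transitive action of $SO(2,r)_0$ on each component of $\overline M_\kappa$, which makes it a homogeneous bundle.

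For $\Phi_\kappa$, I restrict to the preimage of $M_\kappa$. For this I need to check that $W$ avoids every $\overline M_\delta$ with $\delta\in\Delta_\kappa$ if and only if its image does. Since $\delta\perp\kappa$, the condition $W\perp\delta$ is equivalent to $\{w,v\}\perp\delta$, which is equivalent to $W_0\perp\delta$. This gives the two-disk bundle over $M_\kappa$. The two-component claim then follows from the component count already established for $\overline M_\kappa$ and $M_\kappa$, since the fibers are connected.

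\textbf{Item (4).} As $\theta\to\pi/2$ the plane tends to $\langle\hat\kappa,w\rangle$ with $w$ ranging over the unit circle of $W_0$. These are precisely the planes containing $\kappa$ (where $Z=0$) whose orthogonal in the positive $3$-space $\Pi=\langle\kappa\rangle\oplus W_0$ lies in $W_0$. So the boundary circle is $\{\text{planes in }\Pi\text{ containing }\kappa\}$, carrying its orientations. Replacing $W_0$ by $\overline{W_0}$, the same plane with opposite orientation, produces the same circle. The union $\overline{\mathbb D}_\varphi\cup\overline{\mathbb D}_{\overline\varphi}$ is then the set of all oriented planes in the positive $3$-space $\Pi$, which is the $2$-sphere of twistor/hyperKähler periods.

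\textbf{Items (1) and (2).} $\Psi^{<0}_\kappa$ means $f>1$. I expect this to be the main obstacle, because here $v$ is negative, so $u=\cosh s\,\hat\kappa+\sinh s\,v$ with $v\in\kappa^\perp$ negative. I plan to show that $\Psi^{<0}_\kappa$ fibers over the space of pairs consisting of a negative line $\RR v\subset\kappa^\perp$ and a positive $w\perp v$ in $\kappa^\perp$, with fiber $s>0$. To get connectedness, the orientation of $W=\langle u,w\rangle$ must be absorbed by the sign choices of $v$ and $w$. Flipping $w$ reverses the orientation, so each base point already carries both orientations. I would then argue that the base is the connected homogeneous space of $SO(2,r)_0$: the stabilizer of a negative line together with the orthogonal sign choices meets both components of $O(2,r)$.

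The wall $\Psi^{=0}_\kappa=\{f=1\}$ corresponds to $v$ null. It is connected by the same orbit argument, or alternatively because it is the common frontier of the two regions and is contained in the closure of the connected set $\Psi^{<0}_\kappa$. The singular set is $\Psi^*_\kappa$: the locus $Z=0$, where $\kappa\in W$, is where $f$ reaches its critical value with degenerate differential. Away from this locus, $df\ne0$ on $\{f=1\}$ because $\mathrm{pr}_W\kappa\neq\kappa$ can be varied. This shows the wall is a smooth hypersurface off $\Psi^*_\kappa$ separating $f<1$ from $f>1$, which completes (2).
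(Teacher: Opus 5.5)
Your construction is the paper's. In (3)--(4) your retraction is the orthogonal projection of $W$ to $\kappa^\perp$, which is the paper's map $\pi$, and the fibre is the hemisphere of oriented planes in the positive $3$-space $\RR\kappa\oplus W_0$. In (1) your data $(w,v,s)$ is the paper's $(x,y)$, with $x=w$ and, for $\kappa^2=1$, $y=Z_\varphi(\kappa)^2\kappa-Z_\varphi(\kappa)=\sinh s\cosh s\,v$.

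Two slips there need repairing:
\begin{itemize}
\item With $u=\cos\theta\,\hat\kappa+\sin\theta\,v$, the centre $W_0$ of the disk is $\theta=\pi/2$. The planes $\langle\hat\kappa,w\rangle$, which lie in $\Psi^*_\kappa$, appear at $\theta=0$. So the fibre is $\theta\in(0,\pi/2]$. Your range $[0,\pi/2)$ describes a half-open annulus that misses the centre, and the limit in (4) must be taken as $\theta\to 0$.
\item In (1), $u=\cosh s\,\hat\kappa+\sinh s\,v$ depends on the sign of $v$. You should parametrize by a signed $v$ with $v^2=-1$, not by the line $\RR v$. Connectedness is then immediate: $\{v\in\kappa^\perp: v^2=-1\}\cong\RR^2\times S^{r-1}$ and $\{w\in\kappa^\perp\cap v^\perp: w^2=1\}\cong S^1\times\RR^{r-1}$ are both connected for $r\ge 2$. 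No stabilizer count is needed; incidentally, $O(2,r)$ has four components, not two.
\end{itemize}

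The genuine gap is the connectedness of the wall in (2). Neither argument you give works.
\begin{itemize}
\item A subset of the closure of a connected set need not be connected, even if it is the whole frontier of a connected open set.
\item $\Psi^{=0}_\kappa$ is not a single $SO_0(2,r)$-orbit, since $\Psi^*_\kappa$ is a separate orbit.
\end{itemize}
Worse, write a wall point as $W=\langle w,\hat\kappa+n\rangle$ with $w\in\kappa^\perp$, $w^2=1$, and $n\in\kappa^\perp\cap w^\perp$ null. Then $\Psi^{=0}_\kappa\setminus\Psi^*_\kappa$, the locus $n\neq 0$, is disconnected. The reason is that the two nappes of the null cone of the Lorentzian space $\kappa^\perp\cap w^\perp$ cannot be interchanged as $w$ varies. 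Indeed, a positive vector $p(w)\in\kappa^\perp\cap w^\perp$ can be chosen continuously: in a splitting $\kappa^\perp=\RR^2\oplus\RR^r$, rotate the $\RR^2$-component of $w$ by $\pi/2$. The sign of $\langle n,p(w)\rangle$ then separates the nappes.

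So connectedness has to come through the vertex, as in the paper. Project to $w$, whose range is connected. Each fibre is the full null cone $\{n:n^2=0\}$, including $n=0$, and it is connected only because of that vertex, which is a point of $\Psi^*_\kappa$. The same cone picture is what shows $\Psi^*_\kappa$ is genuinely singular; $df=0$ there only shows it is critical.

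Your computation that $df\neq 0$ on the wall away from $\Psi^*_\kappa$ is correct. It shows the wall is a smooth hypersurface there separating $\{f<1\}$ from $\{f>1\}$, which the paper leaves implicit.
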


\begin{proof}
We may assume $\kappa^2=1$ and use $a^2$ to denote the intersection pairing $\langle a,a\rangle$ for any real class $a\in L_\RR$ in the proof below for convenience.

To prove the first two items, observe that for $\varphi\in \Psi_\kappa^{<0}\cup \Psi_\kappa^{=0}$, the vector $\kappa-Z_\varphi(\kappa)\in P(\varphi)$ satisfies $$\langle  \kappa-Z_\varphi(\kappa),\kappa\rangle=1-Z_\varphi(\kappa)^2> 0.$$ So, there exists a unique vector $x(\varphi)\in P(\varphi)\cap \kappa^\perp$ such that $x(\varphi)^2=1$ and the ordered basis $\{x(\varphi),\kappa-Z_\varphi(\kappa)\}$ gives the orientation of $P(\varphi)$. Define $y(\varphi):= Z_{\varphi}(\kappa)^2\kappa-Z_\varphi(\kappa)$. One can verify $y(\varphi)\in \kappa^\perp\cap x(\varphi)^\perp$.

(1) When $Z_{\varphi}(\kappa)^2<0$, notice that $y(\varphi)^2=- Z_{\varphi}(\kappa)^2\cdot Z_{\varphi}(\kappa)^2+Z_{\varphi}(\kappa)^2<0$. Therefore, one can consider the map
\[\Psi_\kappa^{<0}\rightarrow \Theta^{<0}:=\{(x,y)\in\kappa^\perp\times\kappa^\perp\,|\,x^2=1,y^2<0,\langle x,y\rangle=0\},\,\, \varphi\mapsto(x(\varphi),y(\varphi)). \]
This is a bijection, as it has an inverse map by sending any $(x,y)\in \Theta^{<0}$ to the oriented positive-definite two-plane spanned by the ordered basis $\{x,y+\frac{1+\sqrt{1-4y^2}}{2}\kappa\}$. Hence, $\Psi_\kappa^{<0}$ can be identified with $\Theta$. Since $\kappa^\perp$ has signature either $(2,19)$ or $(2,3)$, the space $\Theta^{<0}$ is either an $(S^{18}\times \RR^2)$-bundle over $S^1\times \RR^{19}$ or $(S^{2}\times \RR^2)$-bundle over $S^1\times \RR^{3}$ by projection into the $x$-factor. Therefore, $\Psi^{<0}_\kappa$ is open and connected. 

(2) When $Z_{\varphi}(\kappa)^2=0$, we have $y(\varphi)^2=0$. Similar to the previous case, one can consider the maps
\begin{align*}
    \Psi_\kappa^{=0}\rightarrow \Theta^{=0}&:=\{(x,y)\in\kappa^\perp\times\kappa^\perp\,|\,x^2=1,y^2=0,\langle x,y\rangle=0\},\,\, \varphi\mapsto(x(\varphi),y(\varphi)), \\
    \Psi_\kappa^{*}\rightarrow \Theta^{*}&:=\{(x,y)\in\kappa^\perp\times\kappa^\perp\,|\,x^2=1,y=0\},\,\, \varphi\mapsto(x(\varphi),0). 
\end{align*}
and identify $\Psi^*_\kappa\subseteq \Psi^{=0}_\kappa$ with $\Theta^*\subseteq \Theta^{=0}$. Again, by projecting $\Theta^{=0}$ into the $x$-factor, we see that each fiber is connected with exactly one singularity at $y=0$, which corresponds to the point inside $\Theta^*$.

(3)  When $Z_\varphi(\kappa)^2>0$, $P(\varphi)$ and $\kappa$ span a $3$-dimensional positive-definite subspace $W\subseteq L_\RR$. Hence, we can define the map
\(\pi:\Psi_\kappa^{>0}\rightarrow \overline{M}_\kappa\) by sending the oriented $2$-plane $P(\varphi)$ to its orthogonal projection oriented $2$-plane inside $\kappa^\perp\cap W$ for any $\varphi\in \Psi_\kappa^{>0}$. Given any $\phi\in \overline{M}_\kappa$, the ordered pair of $\kappa$ and the oriented $P(\phi)$ gives an orientation of $W$. One can identify all oriented $2$-planes in $W$ with the set of unit vectors by taking the normal vectors compatible with the orientation of $W$. Therefore, the fiber $\pi^{-1}(\phi)$ is an open hemisphere so that $\pi$ is an open $2$-disk fibration.

In the K3 case, suppose that
\(
\varphi\in\overline{M}_\kappa\cap\overline{M}_\dd
\)
for some $\dd\in\Delta_\kappa$. Then the associated positive $3$-plane $W$ is orthogonal to $\dd$. Consequently, the entire $2$-disk fiber $\pi^{-1}(\varphi)$ is contained in $\overline{M}_\dd$. It follows that $\Phi_\kappa$ is obtained by restricting the $2$-disk fibration
\(
\pi
\)
to $M_\kappa\subseteq\overline{M}_\kappa$.

Since $\overline{M}_\kappa$ has two connected components, so does $\Psi_\kappa^{>0}$. By Lemma \ref{lemma: connected complement}, $\Phi_\kappa$ also has two connected components.

(4) Notice that $P(\varphi)$ and $P(\overline{\varphi})$ are the same $2$-plane in $L_\RR$ equipped with the opposite orientations. The disk fibers $\mathbb D_\varphi,\mathbb D_{\overline{\varphi}}$ can both be parametrized by the points on the unit sphere inside the $3$-dimensional subspace $W$ considered in (3). Since $P(\varphi)$ and $P(\overline{\varphi})$ have opposite orientations, $\mathbb D_\varphi$ and $\mathbb D_{\overline{\varphi}}$ correspond to two disjoint hemispheres, whose closures have the common boundary circle given by the equator. The vectors on the equator are perpendicular to $\kappa$, so that their corresponding period $2$-planes contain $\kappa$. So, the circle is embedded in $\Psi_\kappa^*$. The whole sphere $\overline{\mathbb D}_\varphi\cup \overline{\mathbb D}_{\overline{\varphi}}$ gives rise to a hyperK\"ahler family since $W$ is positive-definite.

\end{proof}

\begin{figure}[h]\label{fig:period domain}
    \centering

\begin{tikzpicture}[
    scale=0.8,
    every node/.style={font=\large},
    thickline/.style={line width=1.4pt, draw=black!70},
    medline/.style={line width=0.75pt, draw=black!65},
    dashcurve/.style={line width=1.2pt, draw=black!70, densely dotted},
    grayfill/.style={fill=black!30, draw=none}
]

% ------------------------------------------------------------
% Central three half-planes / pages
% ------------------------------------------------------------
\coordinate (O) at (0,-2.45);
\coordinate (L) at (-1.35,1.25);
\coordinate (R) at (1.35,1.25);

% central shaded page
\fill[grayfill] (O) -- (L) -- (R) -- cycle;

% three boundary rays Psi_kappa^{=0}
\draw[thickline] (L) -- (-4.55,5.00);
\draw[thickline] (R) -- (4.55,5.00);
\draw[thickline] (O) -- (0,-4.00);

% NOTE: the top edge (L)--(R) is intentionally not drawn.

% ------------------------------------------------------------
% Two gray M_kappa regions
% ------------------------------------------------------------
\fill[grayfill, rotate around={-7:(-4.05,-0.55)}]
    (-4.05,-0.55) ellipse (1.55 and 2.05);

\fill[grayfill, rotate around={7:(4.05,-0.55)}]
    (4.05,-0.55) ellipse (1.55 and 2.05);

% ------------------------------------------------------------
% Large oval in the middle
% Drawn after the shaded M_kappa regions so that it is fully visible.
% ------------------------------------------------------------
\draw[medline]
    (-4.55,-0.75)
    .. controls (-3.40,1.05) and (3.40,1.05) ..
    (4.55,-0.75)
    .. controls (3.40,-2.05) and (-3.40,-2.05) ..
    (-4.55,-0.75);

% marked points
\fill (-4.55,-0.75) circle (0.09);
\fill (4.55,-0.75) circle (0.09);

% ------------------------------------------------------------
% Dashed small central oval
% ------------------------------------------------------------
\draw[dashcurve]
    (0,-1.78)
    .. controls (-0.28,-1.20) and (-0.28,0.35) ..
    (0,0.58)
    .. controls (0.28,0.35) and (0.28,-1.20) ..
    (0,-1.78);

% ------------------------------------------------------------
% Dashed curves around left oval
% ------------------------------------------------------------
\draw[dashcurve]
    (-6.95,0.15)
    .. controls (-6.45,0.48) and (-5.95,0.53) ..
    (-5.45,0.25)
    .. controls (-4.95,0.02) and (-4.65,-0.02) ..
    (-4.20,0.15)
    .. controls (-3.95,0.25) and (-3.80,0.35) ..
    (-3.60,0.48);

\draw[dashcurve]
    (-5.85,-2.50)
    .. controls (-5.95,-2.05) and (-5.75,-1.52) ..
    (-5.35,-1.23)
    .. controls (-5.05,-1.02) and (-4.82,-0.90) ..
    (-4.55,-0.95);

\draw[dashcurve]
    (-3.15,0.48)
    .. controls (-2.82,0.45) and (-2.77,0.70) ..
    (-2.50,0.98)
    .. controls (-2.32,1.18) and (-2.17,1.40) ..
    (-1.88,1.35);

% ------------------------------------------------------------
% Dashed curves around right oval
% ------------------------------------------------------------
\draw[dashcurve]
    (5.15,2.00)
    .. controls (5.12,1.55) and (5.25,1.28) ..
    (5.55,0.98)
    .. controls (5.80,0.70) and (5.75,0.42) ..
    (4.35,0.28);

\draw[dashcurve]
    (4.95,-0.12)
    .. controls (5.35,0.20) and (5.65,-0.05) ..
    (6.00,-0.28)
    .. controls (6.45,-0.60) and (6.85,-0.70) ..
    (7.18,-1.10);

\draw[dashcurve]
    (3.62,-3.05)
    .. controls (3.55,-2.45) and (3.55,-2.05) ..
    (3.45,-2)
    .. controls (3.35,-1.8) and (3.35,-1.6) ..
    (3.22,-1.5);

% ------------------------------------------------------------
% Labels: Psi regions
% ------------------------------------------------------------
\node[font=\LARGE] at (0.30,3.25) {$\Psi_{\kappa}^{<0}$};

\node[font=\LARGE] at (-1.95,-2.88) {$\Psi_{\kappa}^{>0}$};
\node[font=\LARGE] at (1.95,-2.88) {$\Psi_{\kappa}^{>0}$};

\node at (-1.87,2.30) {$\Psi_{\kappa}^{=0}$};
\node at (1.87,2.30) {$\Psi_{\kappa}^{=0}$};
\node at (0.30,-3.65) {$\Psi_{\kappa}^{=0}$};

\node[font=\LARGE] at (0,0.93) {$\Psi_{\kappa}^{*}$};

% ------------------------------------------------------------
% Labels: D regions
% ------------------------------------------------------------
\node at (-1.95,-0.42) {$\mathbb{D}_{\varphi}$};
\node at (1.95,-0.42) {$\mathbb{D}_{\overline{\varphi}}$};

\node at (0,-0.45) {$\overline{\mathbb{D}}_{\varphi}
    \cap \overline{\mathbb{D}}_{\overline{\varphi}}$};

% ------------------------------------------------------------
% Labels: M_kappa regions and points
% ------------------------------------------------------------
\node[font=\LARGE] at (-4.35,-1.82) {$\overline{M}_{\kappa}$};
\node[font=\LARGE] at (4.35,-1.82) {$\overline{M}_{\kappa}$};

\node at (-4.85,-0.75) {$\varphi$};
\node at (4.85,-0.75) {$\overline{\varphi}$};

% ------------------------------------------------------------
% Boundary component labels
% ------------------------------------------------------------
\node at (-6.55,0.95) {$\overline{M}_{\delta_1}$};
\node at (-2.45,1.68) {$\overline{M}_{\delta_2}$};
\node at (-5.65,-2.70) {$\overline{M}_{\delta_3}$};

\node at (5.55,1.58) {$\overline{M}_{\delta_4}$};
\node at (6.83,-1.18) {$\overline{M}_{\delta_5}$};
\node at (4.05,-3.15) {$\overline{M}_{\delta_6}$};
 
\end{tikzpicture}
 
\caption{A schematic picture of the decomposition of the period domain $\Phi$ with respect to $\kappa$. 
The wall $\Psi_\kappa^{=0}$ contains the locus $\Psi_\kappa^*$, shown as the central triangular region, and separates $\Phi$ into the regions $\Psi_\kappa^{<0}$ and $\Psi_\kappa^{>0}$. 
The central sphere joining the periods $\varphi$ and $\overline{\varphi}$ represents a hyperK\"ahler twistor family, which realizes $\Psi_\kappa^{>0}$ as a disk bundle over $\overline{M}_\kappa$. 
When $X=K3$, $M_\kappa$ and $\Phi_\kappa$ are obtained by deleting the dashed curves corresponding to the loci $\overline{M}_{\delta_i}$.}
\end{figure}

The roles played by $M_\kappa$ and $\Phi_\kappa$ are illustrated by the lemma below.

\begin{lemma}\label{lem:kahler and tame period subdomain}
For any $\kappa\in L_\RR$ with $\kappa^2>0$, we have the following.
\begin{enumerate}
    \item There exists a complex structure $J\in \text{Per}^{-1}(\varphi)$ such that $\kappa\in \pm \mathcal{K}^c_J(X)$ (resp. $\kappa\in  \pm \mathcal{K}^t_J(X)$) if and only if  $\varphi\in M_\kappa$ (resp. $\varphi\in \Phi_\kappa$).
    \item  If $\Delta_\kappa=\varnothing$, then $\kappa\in  \pm\mathcal{K}^c_J(X)$ (resp. $\kappa\in  \pm\mathcal{K}^t_J(X)$) if and only if $\text{Per}(J)\in M_\kappa$ (resp. $\text{Per}(J)\in\Phi_\kappa$).
\end{enumerate}
   
\end{lemma}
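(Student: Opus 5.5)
The plan is to reduce everything to the standard description of the K\"ahler cone of a K3 surface or complex torus in terms of its period, and then to pass from compatible to tamed classes using Theorem~\ref{thm:LZb1}(2). Fix $\varphi\in\Phi$ and write $H^{1,1}_\varphi=P(\varphi)^\perp\subseteq L_\RR$. Let $\mathcal C_\varphi$ be the positive cone $\{x\in H^{1,1}_\varphi\,|\,x^2>0\}$; it has two components, exchanged by $x\mapsto -x$. Let $\Delta(\varphi)=\{\dd\in L\,|\,\dd^2=-2,\ \dd\perp P(\varphi)\}$ be the $(-2)$-classes of type $(1,1)$.

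We use two classical inputs from Torelli theory \cite{BHPVbook,LooijengaK3}:
\begin{enumerate}
\item For $\mathbb T^4$, $\mathcal K^c_J(X)$ is one component of $\mathcal C_\varphi$. For K3, $\mathcal K^c_J(X)$ is a chamber of that component, i.e. a connected component of $\mathcal C_\varphi\setminus\bigcup_{\dd\in\Delta(\varphi)}\dd^\perp$.
\item Every chamber, including those in $-\mathcal C_\varphi$ after replacing $\kappa$ by $-\kappa$, arises as $\mathcal K^c_J(X)$ for some $J\in\textup{Per}^{-1}(\varphi)$. Here one uses that the Weyl group generated by reflections in $\Delta(\varphi)$ acts transitively on the chambers, that these reflections are induced by diffeomorphisms (Dehn twists on $(-2)$-spheres), and surjectivity of the period map.
\end{enumerate}
The $\pm$ in the statement absorbs the choice of component of the positive cone.

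\emph{Compatible case of (1).} Suppose $\kappa\in\pm\mathcal K^c_J(X)$ with $\textup{Per}(J)=\varphi$. Then $\kappa$ is of type $(1,1)$, so $\langle\varphi,\kappa\rangle=0$, i.e. $\varphi\in\overline M_\kappa$. Moreover $\kappa$ lies in the interior of a chamber, so $\kappa\not\perp\dd$ for every $\dd\in\Delta(\varphi)$. Any $\dd\in\Delta_\kappa$ with $\varphi\in\overline M_\dd$ would lie in $\Delta(\varphi)$ and be orthogonal to $\kappa$, which is impossible; hence $\varphi\in M_\kappa$. Conversely, if $\varphi\in M_\kappa$, then $\kappa\in\pm\mathcal C_\varphi$ because $\kappa^2>0$. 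Also no $\dd\in\Delta(\varphi)$ is orthogonal to $\kappa$, since such a $\dd$ would lie in $\Delta_\kappa$ and put $\varphi$ in $\overline M_\dd$. So $\kappa$ lies in the interior of a chamber, and input (2) produces $J\in\textup{Per}^{-1}(\varphi)$ with $\kappa\in\pm\mathcal K^c_J(X)$.

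\emph{Tamed case of (1).} By Theorem~\ref{thm:LZb1}(2), $\mathcal K^t_J(X)=\mathcal K^c_J(X)+H^-_J(X)$, and $H^-_J(X)=P(\varphi)$. Hence $\kappa\in\pm\mathcal K^t_J(X)$ if and only if $Z_\varphi(\kappa)\in\pm\mathcal K^c_J(X)$. Repeat the previous argument with $Z_\varphi(\kappa)$ in place of $\kappa$. First, $Z_\varphi(\kappa)^2>0$ is exactly the condition $\varphi\in\Psi^{>0}_\kappa$. Second, for $\dd\in\Delta(\varphi)$ we have $\langle\dd,Z_\varphi(\kappa)\rangle=\langle\dd,\kappa\rangle$, because $\dd\perp P(\varphi)$. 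So the wall condition again reads $\varphi\notin\overline M_\dd$ for all $\dd\in\Delta_\kappa$. This gives exactly $\varphi\in\Phi_\kappa$. The torus case is the same with no walls.

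\emph{Item (2), the expected main obstacle.} Here we must show the chamber is forced for \emph{every} $J$ with $\textup{Per}(J)\in M_\kappa$, not just for one. In the torus case this is immediate: $\mathcal K^c_J(X)$ is a full component of the positive cone, and $\pm$ covers both components. In the K3 case the difficulty is that a $(-2)$-class $\dd\in\Delta(\varphi)$ with $\langle\dd,\kappa\rangle\neq0$ could still separate $\kappa$ from $\mathcal K^c_J(X)$. The hypothesis $\Delta_\kappa=\varnothing$ only rules out walls through $\kappa$ itself.

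My first attempt would be a deformation argument along $M_\kappa$.
\begin{enumerate}
\item By (1) choose some $J_0$ with $\kappa\in\pm\mathcal K^c_{J_0}(X)$.
\item Join $\textup{Per}(J_0)$ to $\textup{Per}(J)$ by a path in $M_\kappa$, lift it via local Torelli, and use Kodaira--Spencer stability. The class $\kappa$ can stop being K\"ahler only by hitting a wall $\dd^\perp$ with $\dd\perp\kappa$, i.e. $\dd\in\Delta_\kappa=\varnothing$, so it stays K\"ahler along the lifted path.
\item Compare the endpoint of the lifted family with the given $J$, which has the same period. This is where the Weyl-group ambiguity reappears, and I expect to need either a strengthened hypothesis or a reinterpretation of item (2) up to the action of $\Df(X)$.
\end{enumerate}
I would check this last step carefully, since for a period with nonempty $\Delta(\varphi)$ the literal statement appears to require that no $(-2)$-class of type $(1,1)$ separates $\kappa$ from $\mathcal K^c_J(X)$.
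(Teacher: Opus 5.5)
Your proof of (1) is correct and follows the paper's argument. The K\"ahler cone of a K3 surface is a chamber of the positive cone cut out by the $(-2)$-classes of type $(1,1)$. Every chamber, up to sign, is realized by some complex structure in $\operatorname{Per}^{-1}(\varphi)$. The tamed case reduces to the compatible one for $Z_\varphi(\kappa)$ via Theorem~\ref{thm:LZb1}(2), using $H^-_J(X)=P(\varphi)$ and $\langle\delta,Z_\varphi(\kappa)\rangle=\langle\delta,\kappa\rangle$ for $\delta\perp P(\varphi)$.

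For (2) your doubt is justified: the literal statement is false for K3, and the paper's proof has exactly the gap you point to. The paper argues that if $\Delta_\kappa=\varnothing$ and $\operatorname{Per}(J)\in M_\kappa$, then $\kappa$ lies in one component of the positive cone of $H^{1,1}_J(X;\RR)$, and concludes $\kappa\in\pm\mathcal K^c_J(X)$. This identifies the K\"ahler cone with a whole component of the positive cone. That is true for $\mathbb T^4$, but not for a K3 surface containing a smooth rational curve.

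Here is an explicit counterexample.
\begin{enumerate}
\item Let $(X,J)$ contain a smooth rational curve of class $\delta$, and set $\varphi=\operatorname{Per}(J)$.
\item Choose a K\"ahler class $\omega$ with $\langle\omega,\delta'\rangle\neq0$ for every integral $\delta'$ with $\delta'^2=-2$. This is possible because each $\delta'^\perp\cap H^{1,1}_J(X;\RR)$ is a proper hyperplane, since $\delta'$ has negative square and so is not in $P(\varphi)$.
\item Put $c:=\langle\omega,\delta\rangle>0$ and $\kappa:=s_\delta(\omega)=\omega+c\,\delta$, where $s_\delta(x)=x+\langle x,\delta\rangle\delta$ is an integral isometry.
\item Then $\kappa^2=\omega^2>0$ and $\Delta_\kappa=s_\delta(\Delta_\omega)=\varnothing$.
\item Since $\kappa$ is of type $(1,1)$, we have $\varphi\in\overline M_\kappa=M_\kappa$. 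Since $Z_\varphi(\kappa)=\kappa$, also $\varphi\in\Phi_\kappa$.
\item However $\langle\kappa,\delta\rangle=-c<0$, so $\kappa\notin\mathcal K^c_J(X)$. Also $\langle\kappa,\omega\rangle=\omega^2+c^2>0$, so $-\kappa\notin\mathcal K^c_J(X)$. Because $\kappa$ is of type $(1,1)$, it follows that $\kappa\notin\pm\mathcal K^t_J(X)$ as well.
\end{enumerate}
So (2) holds for $\mathbb T^4$. For K3 it holds only under an extra hypothesis such as $\operatorname{Per}(J)\notin\overline M_{\delta}$ for every $(-2)$-class $\delta$. In that case $\mathcal K^c_J(X)$ is a whole component of the positive cone, and the assumption $\Delta_\kappa=\varnothing$ is not even needed. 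In general one only gets $\kappa\in\pm\mathcal K^c_{f^*J}(X)$ for some diffeomorphism $f$ acting on $H^2$ by an element of the Weyl group of $\operatorname{Per}(J)$, which is just (1).

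You should also drop step 2 of your deformation attempt, so this route could not rescue (2) in any form. Along a lift of a path in $M_\kappa$, the K\"ahler property of $\kappa$ is open (by Kodaira--Spencer) but not closed. In the example above, lift a path in $M_\kappa$ through $\operatorname{Per}(J)$ using the Kuranishi family of $J$. On nearby fibres with no $(-2)$-class of type $(1,1)$, the K\"ahler cone is the full positive component containing the deformed class of $\omega$, so $\kappa$ is K\"ahler there. On the central fibre, however, $\kappa$ is not K\"ahler. Hence $\kappa$ can stop being K\"ahler on any $\overline M_\delta$ with $\langle\delta,\kappa\rangle\neq0$, not only on walls with $\delta\in\Delta_\kappa$. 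This is the non-separatedness phenomenon behind Atiyah's flop.
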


\begin{proof}
   (1) If $\kappa\in\pm \mathcal{K}_J^c(X)$, then $\kappa\in H^{1,1}_J(X)$. This implies that
   \(
   \langle\kappa,\text{Per}(J)\rangle=0,
   \)
   so that $\text{Per}(J)\in\overline{M}_\kappa$. When $X=K3$, for any $\dd\in\Delta_\kappa$, either $\dd$ or $-\dd$ is represented by an effective $(-2)$-curve by Riemann--Roch. Since a K\"ahler class pairs positively with every effective curve, we must have
   \(
   \text{Per}(J)\notin \overline{M}_\dd .
   \)
   Hence $\text{Per}(J)\in M_\kappa$.
   Conversely, suppose $\varphi\in M_\kappa$. Then $\kappa\in H^{1,1}_J(X)$ and $\kappa$ is not orthogonal to any $(-2)$-class in $H^{1,1}_J(X)$. Hence $\kappa$ lies in the interior of one of the chambers of the positive cone. By the surjectivity part of the Torelli theorem, there exists a complex structure $J\in \text{Per}^{-1}(\varphi)$ such that either $\mathcal{K}_J^c(X)$ or $-\mathcal{K}_J^c(X)$ is this chamber. Therefore
   \(
   \kappa\in \pm\mathcal{K}_J^c(X).
   \)

   The tame statement follows from Theorem \ref{thm:LZb1}. Indeed, if
   \(
   \kappa\in \pm \mathcal{K}_J^t(X),
   \)
   then $Z_\varphi(\kappa)\in \pm\mathcal{K}_J^c(X)$, where $\varphi=\text{Per}(J)$. In particular,
   \(
   Z_\varphi(\kappa)^2>0,
   \) and 
   \(
   \langle\kappa,\dd\rangle=\langle \kappa-Z_\varphi(\kappa),\dd\rangle\neq 0
   \) for any $\dd\in \Delta_\kappa$.
   So $\varphi\in\Phi_\kappa$. Conversely, if $\varphi\in\Phi_\kappa$, then applying the above K\"ahler version statement to $Z_\varphi(\kappa)$, we can choose some $J\in\text{Per}^{-1}(\varphi)$ such that
   \(
   Z_\varphi(\kappa)\in \pm\mathcal{K}_J^c(X).
   \)
   Theorem \ref{thm:LZb1} then implies
   \(
   \kappa\in \pm\mathcal{K}_J^t(X).
   \)

   (2) If $\Delta_\kappa=\varnothing$, then
   \(
   M_\kappa=\overline{M}_\kappa.
   \)
   Therefore, for any complex structure $J$,
   \(
   \kappa\in H^{1,1}_J(X)
   \)
   is equivalent to
   \(
   \text{Per}(J)\in M_\kappa.
   \)
   Since there are no $(-2)$-walls orthogonal to $\kappa$, the class $\kappa$ lies in the one of the two components of the positive cone whenever $\text{Per}(J)\in M_\kappa$. Thus
   \(
   \kappa\in \pm\mathcal{K}_J^c(X)
   \)
   if and only if
   \(
   \text{Per}(J)\in M_\kappa.
   \)
   The tame version follows in the same way from Theorem \ref{thm:LZb1}, replacing $\kappa$ by its projection $Z_{\text{Per}(J)}(\kappa)$. Hence
   \(
   \kappa\in \pm\mathcal{K}_J^t(X)
   \)
   if and only if
   \(
   \text{Per}(J)\in\Phi_\kappa.
   \)

\end{proof}

The following lemma will be used to discuss the sign ambiguity.

\begin{lemma}\label{lem:plus or minus}
Let $J$ be a complex structure on $X$ with $\text{Per}(J)=\varphi\in \Phi_\kappa^+$, where $\Phi_\kappa^+$ is one of the two connected components of $\Phi_k$. Suppose that $\kappa\in  \mathcal{K}^t_J(X)$. Then, $-\kappa\not\in \mathcal{K}_{J'}^t(X)$ for every $J'\in \text{Per}^{-1}(\Phi_\kappa^+)$.
   
\end{lemma}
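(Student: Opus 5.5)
We argue by continuity of a sign along paths in the connected component $\Phi_\kappa^+$. For each $\psi\in\Phi_\kappa$ and each $J'\in \text{Per}^{-1}(\psi)$ with $\kappa\in\pm\mathcal K^t_{J'}(X)$, Theorem~\ref{thm:LZb1} gives $Z_\psi(\kappa)\in\pm\mathcal K^c_{J'}(X)$. In particular, $Z_\psi(\kappa)$ lies in one of the two components of the positive cone $\mathcal C_\psi:=\{x\in P(\psi)^\perp\,|\,x^2>0\}$ of $H^{1,1}$. The key observation is that the Kähler cone of any complex structure $J'$ is contained in a distinguished component $\mathcal C^+_{J'}$ of $\mathcal C_\psi$. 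This component is pinned down topologically: it is the one such that $(\text{Re}\,\theta,\text{Im}\,\theta,x)$, with $x\in\mathcal C^+_{J'}$, is a positively oriented basis of a maximal positive $3$-plane with respect to a fixed orientation of positive $3$-planes in $L_\RR$. This holds because a Kähler form $\omega$ and a holomorphic $2$-form $\theta$ satisfy $\omega^2>0$ and $\text{Re}\,\theta\wedge\text{Im}\,\theta=0$, and $(\text{Re}\,\theta,\text{Im}\,\theta,\omega)$ gives a self-dual triple oriented compatibly with the orientation of $X$ (as for a hyperK\"ahler triple). Consequently the sign $\epsilon(J')\in\{\pm1\}$ with $\kappa\in\epsilon(J')\mathcal K^t_{J'}(X)$ is determined by the period alone: $\epsilon(J')=\epsilon(\psi)$, where $\epsilon(\psi)$ is the sign of the orientation of the positive $3$-plane spanned by $(\text{Re}\,\theta,\text{Im}\,\theta,Z_\psi(\kappa))$. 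This is the same $3$-plane $W$ appearing in the proof of Lemma~\ref{lem:Grassmannian}(3).

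Next we show that $\epsilon$ is locally constant on $\Phi_\kappa$. The map $\psi\mapsto (\text{Re}\,\theta,\text{Im}\,\theta,Z_\psi(\kappa))$ is continuous on $\Psi^{>0}_\kappa$, since the orthogonal projection $Z_\psi$ varies continuously with $P(\psi)$. Moreover, the triple stays linearly independent and spans a positive-definite $3$-plane throughout $\Psi^{>0}_\kappa$, because $Z_\psi(\kappa)^2>0$ there. The orientation class of a continuously varying frame of positive $3$-planes is therefore constant on connected sets. Since $\Phi_\kappa^+$ is connected by Lemma~\ref{lem:Grassmannian}(3), $\epsilon$ is constant on $\Phi_\kappa^+$.

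To conclude, we have $\epsilon(\varphi)=+1$ by the hypothesis $\kappa\in\mathcal K^t_J(X)$. Suppose $-\kappa\in\mathcal K^t_{J'}(X)$ with $\text{Per}(J')\in\Phi^+_\kappa$. Then $\epsilon(\text{Per}(J'))=-1$, which is a contradiction. One point needs care here: $\kappa$ and $-\kappa$ cannot both lie in $\mathcal K^t_{J'}(X)$, since this would force $Z(\kappa)$ and $-Z(\kappa)$ to both be Kähler classes, which is impossible as $\mathcal K^c_{J'}(X)$ is a convex cone not containing $0$. So $\epsilon(J')$ is well-defined.

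The main obstacle is the orientation claim in the first step: that every Kähler class of every $J'$ with a given period lies in the component of the positive cone fixed by the orientation convention, independently of $J'$. Different $J'$ with the same period differ by Weyl reflections, which preserve the positive-cone component, and by $\pm$. We rule out the $-$ sign by this pointwise orientation computation with $\omega^2$ and $\text{Re}\,\theta\wedge\text{Im}\,\theta$, which we would carry out in local holomorphic coordinates.
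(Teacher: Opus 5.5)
Your propagation step is sound. On $\Psi^{>0}_\kappa$ the frame $(\operatorname{Re}\theta,\operatorname{Im}\theta,Z_\psi(\kappa))$ varies continuously and spans a maximal positive $3$-plane, so its sign $\epsilon(\psi)$ is constant on each component of $\Phi_\kappa$. In fact $\epsilon$ is exactly what separates the two components in Lemma~\ref{lem:Grassmannian}(3).

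\textbf{The gap is in the first step, and it carries all the content of the lemma.} You claim that for \emph{every} complex structure $J'$ on $X$, the K\"ahler cone lies in the component of the positive cone picked out by one fixed orientation of positive $3$-planes in $L_\RR$. A computation in local holomorphic coordinates only shows that $(\operatorname{Re}\theta,\operatorname{Im}\theta,\omega)$ is a positive frame of the bundle $\Lambda^+_g$, where $g$ is the K\"ahler metric of $J'$. That is a pointwise statement about forms, for a metric that depends on $J'$. Nothing local turns it into an orientation of the $3$-plane of cohomology classes that can be compared between complex structures not joined by a deformation.

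Your claim is actually Donaldson's theorem \cite{Donaldsonpoly}. Apply it to $J$ and to $f^*J$ for an arbitrary $f\in\Df(X)$: it says every orientation-preserving diffeomorphism of K3 preserves the orientation of maximal positive subspaces of $H^2$. Concretely, suppose $f\colon(X,J')\to(X,J)$ were a biholomorphism with $f^*=-\mathrm{Id}$ on $H^2$. Then:
\begin{itemize}
\item $f^*\omega$ is $J'$-K\"ahler in the class $-\kappa$;
\item $f^*\theta$ is a $J'$-holomorphic $2$-form with the same period;
\item $f$ preserves the orientation of $X$ and the pointwise orientation of the frame.
\end{itemize}
So $f$ satisfies every identity you invoke, yet it produces exactly the configuration $\kappa\in\mathcal K^c_J$, $-\kappa\in\mathcal K^c_{J'}$, $\operatorname{Per}(J)=\operatorname{Per}(J')$ that the lemma must exclude. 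The $\pm$ ambiguity you correctly isolate therefore cannot be removed by the computation you propose.

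The missing input is global. The paper handles a fixed period as follows. The Hodge isometry $-\mathrm{Id}$ would carry $\mathcal K^c_J$ onto $\mathcal K^c_{J'}$, so by the strong Torelli theorem it would be induced by a diffeomorphism. This contradicts Donaldson's theorem for K3; for $\mathbb T^4$ it contradicts the elementary fact that $-\mathrm{Id}$ on $H^2$ is not $\wedge^2$ of any automorphism of $H^1$. The tame case reduces to this via Theorem~\ref{thm:LZb1}, and the sign is then propagated along a path in $\Phi^+_\kappa$.

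Alternatively, you can prove your orientation claim properly from three ingredients:
\begin{itemize}
\item Donaldson's theorem;
\item the deformation equivalence of all K3 surfaces;
\item Kodaira--Spencer stability, which makes the sign locally constant along paths of complex structures.
\end{itemize}
With that in place, your invariant $\epsilon$ gives a clean propagation argument. It uses only $Z_\psi(\kappa)^2>0$, and does not require tracking whether $\kappa$ stays in $\pm\mathcal K^t_{J_s}$ along a particular lift of the period path.
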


\begin{proof}
  Suppose first that $\kappa\in\mathcal{K}_J^c(X)$ and that, for some
   $J'\in\text{Per}^{-1}(\varphi)$, one has
   \(
   -\kappa\in\mathcal{K}_{J'}^c(X).
   \)
   Then the Hodge isometry $-\text{Id}$ sends $\mathcal{K}_J^c(X)$ to
   $\mathcal{K}_{J'}^c(X)$. By the Torelli theorem, this Hodge isometry would be induced by a diffeomorphism of $X$. This contradicts Donaldson's theorem \cite{Donaldsonpoly}, which rules out a diffeomorphism of a K3 surface acting by $-\text{Id}$ on $H^2(K3;\ZZ)$; or the simple fact that no automorphism on $H^1(\mathbb T^4;\ZZ)$ can induce $-\text{Id}$ on $H^2(\mathbb T^4;\ZZ)$. Therefore
   \(
   -\kappa\notin\mathcal{K}_{J'}^c(X)
   \)
   for every $J'\in\text{Per}^{-1}(\varphi)$.

   The tame version follows from Theorem \ref{thm:LZb1}. If
   \(
   \kappa\in \pm\mathcal{K}_J^t(X),
   \)
   then
   \(
   Z_{\text{Per}(J)}(\kappa)\in \pm\mathcal{K}_J^c(X).
   \)
   Since $J'$ has the same period as $J$, the projection
   $Z_{\text{Per}(J)}$ is the same for $J'$ and $J$. If
   \(
   -\kappa\in\mathcal{K}_{J'}^t(X),
   \)
   then
   \(
   -Z_{\text{Per}(J)}(\kappa)\in\mathcal{K}_{J'}^c(X),
   \)
   contradicting the K\"ahler case just proved. Hence
   \(
   -\kappa\notin\mathcal{K}_{J'}^t(X)
   \)
   for every $J'\in\text{Per}^{-1}(\varphi)$.

   For a general $J'\in \text{Per}^{-1}(\Phi_\kappa^+)$, we employ a continuity argument. Choose a smooth path of complex structures $\{J_s\}$ with $J_0=J$ and $J_1\in\text{Per}^{-1}(\text{Per}(J'))$. It suffices to show $-\kappa\notin\mathcal{K}_{J_1}^t(X)$ by the previous conclusion. Consider $r:=\sup\{t\in[0,1]\,|\,\kappa\in\mathcal{K}_{J_s}^t(X)\text{ for all }s\in[0,t]\}$. Since $\kappa\in\mathcal{K}_{J_0}^t(X)$, $r$ is a positive number. We must have $r=1$: otherwise, either $-\kappa\in\mathcal{K}_{J_r}^t(X)$ would imply $-\kappa\in\mathcal{K}_{J_{r'}}^t(X)$ for some $r'\in(0,r)$ sufficiently close to $r$; or $\kappa\in\mathcal{K}_{J_r}^t(X)$ would imply there is $r''\in(r,1)$ such that $\kappa\in\mathcal{K}_{J_{s}}^t(X)$ for any $s<r''$. This contracts either the conclusion in the previous paragraph or the definition of $r$. Hence, $\kappa\in\mathcal{K}_{J_s}^t(X)$ for all $s\in[0,1]$
\end{proof}

Finally, let us address the following implication on the connectedness of $\tilde{\mathcal{I}}(X)$ from Torelli theorem for K3 and $\mathbb T^4$. Recall that the complex structure we consider is always assumed to be compatible with a fixed orientation on $X$. $\mathbb T^4$ has an obvious orientation-reversing self-diffeomorphism so that we can refer to any orientation in the discussion below; while for K3 we refer to the specific orientation\footnote{Indeed, the reverse one with $b_2^+(X)=19$ does not admit any complex structure by Enriques-Kodaira classification and Bogomolov–Miyaoka-Yau inequality.} on $X$ with $b_2^+(X)=3$ below.

\begin{lemma}\label{lem:cpxconn}
    For any complex structures $J_0,J_1$ on $X$, there exists some $f\in \textup{Diff}_h(X)$ such that $J_0$ and $f^*J_1$ can be connected by a path of complex structures. Namely, $\tilde{\mathcal{I}}(X)$ is connected.
\end{lemma}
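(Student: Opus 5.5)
We plan to prove the lemma through the period map and the path-lifting property of periods recalled in Section~\ref{section:torelli}. Fix $J_0,J_1$ and set $\varphi_i=\text{Per}(J_i)$. We would first check that $\Phi$ is path-connected. Under the Grassmannian description, $\Phi\cong G_2^{+,\text{or}}(L_\RR)$. This is the homogeneous space $SO(3,19)/(SO(2)\times SO(1,19))$, or $SO(3,3)/(SO(2)\times SO(1,3))$ for $\mathbb T^4$. A more concrete argument runs as follows. An oriented positive $2$-plane $P$ together with its positive orthogonal line gives an oriented positive $3$-plane up to a choice of sign of the third vector, and the space of positive $3$-planes is contractible. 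Over a fixed positive $3$-plane $W$, the oriented $2$-planes in $W$ form a sphere $S^2$, which is connected. Equivalently, the twistor spheres of Lemma~\ref{lem:Grassmannian}(4) join $\varphi$ to $\overline{\varphi}$, so both possible orientations lie in one component. Hence $\Phi$ is connected, and we choose a path $\varphi(t)$ from $\varphi_0$ to $\varphi_1$.

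Next we lift the path. As recalled after the definition of the period map, local Torelli and a finite subdivision of the interval produce a family of complex structures $J(t)$ with $\text{Per}(J(t))=\varphi(t)$ and $J(0)=J_0$. At the endpoint, $\text{Per}(J(1))=\text{Per}(J_1)$. By the Torelli theorem there is a biholomorphism $g:(X,J(1))\to(X,J_1)$, so $J(1)=g^*J_1$. However, $g^*$ is in general a Hodge isometry fixing $\varphi_1$ but not necessarily the identity: it maps $\mathcal K^c_{J(1)}$ onto $\mathcal K^c_{J_1}$, and these cones may differ by an element $w$ of the Weyl group generated by reflections in $(-2)$-classes of $H^{1,1}$, possibly composed with $-\text{Id}$ on the positive cone.

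The main obstacle is removing this discrepancy so that the final diffeomorphism lies in $\Dfh(X)$. For $\mathbb T^4$ there are no $(-2)$-walls, and we would argue that the K\"ahler cone is the component of the positive cone determined by the orientation and period. The requirement that the cones match is then automatic, and the strong Torelli theorem for tori gives $g$ acting trivially on $H^2$, after possibly using the translation/$-1$ ambiguity on $H^1$, which is harmless on $H^2$. For K3, we first handle the sign: the cone $\pm\mathcal C^+$ component is governed by the orientation of $P(\varphi)$ together with a continuously varying component choice. We then replace the endpoint of the path as follows. Pick a class $\kappa\in\mathcal K^c_{J_1}$ with $\Delta_\kappa=\varnothing$; a generic real class works. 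By Lemma~\ref{lem:kahler and tame period subdomain}(2), any structure whose period lies in the component of $M_\kappa$ containing $\varphi_1$ has $\kappa$ in $\pm$ its K\"ahler cone, and Lemma~\ref{lem:plus or minus} fixes the sign along that component. We route the path so that it ends in a small neighborhood of $\varphi_1$, then reselect the lift near $\varphi_1$ by a local deformation of $J_1$ itself, which does preserve $\kappa$ as a K\"ahler class by Kodaira--Spencer. Matching at a common period with common K\"ahler class $\kappa$ forces the Hodge isometry to be the identity by the strong Torelli theorem, so $g\in\Dfh(X)$. The delicate point is making this rerouting consistent, in particular ensuring the two lifts agree near $\varphi_1$ rather than at a period on a $(-2)$-wall; we would perturb $\varphi_1$ slightly off all walls first.
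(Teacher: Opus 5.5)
The K3 step you use to remove the Weyl-group discrepancy has a genuine gap, although the remedy in your last sentence is exactly the paper's proof. The rest is in line with the paper: connectedness of $\Phi$ (more cleanly phrased via the incidence space of pairs $P\subset W$, since a positive $2$-plane has many positive orthogonal lines), the path-lifting step, and the torus case. For tori, $H^{2,0}=\wedge^2H^{1,0}$ determines $H^{1,0}$, so Shioda's theorem applies to the identity on $H^1$. Note that $3H$ does contain $(-2)$-vectors; what matters is that tori carry no rational curves.

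\textbf{The gap.} From $\Delta_\kappa=\varnothing$ you infer that every $J$ with $\text{Per}(J)$ in the relevant component of $M_\kappa$ has $\kappa\in\pm\mathcal K^c_J(X)$. But $\Delta_\kappa=\varnothing$ only says that no $(-2)$-wall passes through $\kappa$. So $\kappa$ lies in the interior of \emph{some} chamber of the positive cone of $H^{1,1}$. It does not say that this chamber is the K\"ahler cone of a given $J$ in the fibre of $\text{Per}$, and that is exactly the ambiguity you set out to resolve.

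\textbf{Counterexample.} Suppose $(X,J)$ contains a smooth rational curve $C$, and $\omega$ is a K\"ahler class with $\Delta_\omega=\varnothing$. Put $\kappa:=\omega+(\omega\cdot C)C$. Then $\kappa^2=\omega^2$, $\Delta_\kappa=s_C(\Delta_\omega)=\varnothing$, and $\text{Per}(J)\in M_\kappa$. Yet $\kappa\cdot C=-\omega\cdot C<0$ and $\kappa\cdot\omega>0$, so $\kappa\notin\pm\mathcal K^c_J(X)$. Hence item (2) of Lemma \ref{lem:kahler and tame period subdomain} does not hold in the generality you invoke it.

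In your setting, suppose the common period lies on a wall $\overline M_\delta$ with $\delta\notin\Delta_\kappa$. Then the lift $J(1)$ may differ from the deformed $J_1'$ by the reflection in such a $(-2)$-curve class, and $\kappa$ is not K\"ahler for $J(1)$. Kodaira--Spencer controls only the deformation $J_1'$ of $J_1$, not the chamber of $J(1)$.

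\textbf{The fix.} Your final sentence is what actually closes the argument, and it should be the main step rather than a patch. Choose the common period $\varphi'$ with $\langle\varphi',\delta\rangle\neq0$ for \emph{every} $(-2)$-class $\delta$; this is a Baire-generic condition. Reach $\varphi'$ by lifting paths in the connected domain $\Phi$, starting both from $J_0$ and from $J_1$. Then:
\begin{itemize}
\item neither surface contains $(-2)$-curves, so each K\"ahler cone is an entire component of the positive cone in $H^{1,1}_{\varphi'}$;
\item Lemma \ref{lem:plus or minus} rules out the two cones being opposite components;
\item the Torelli theorem applied to the identity gives $f\in\Dfh(X)$.
\end{itemize}
This is precisely the paper's proof. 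Once genericity is imposed, the auxiliary class $\kappa$, the condition $\Delta_\kappa=\varnothing$, proximity to $\varphi_1$, and Kodaira--Spencer all become unnecessary. As written, your Kodaira--Spencer step would also need $\varphi'\in\overline M_\kappa$ to keep $\kappa$ of type $(1,1)$.
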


\begin{proof}
For $X=\mathbb{T}^4$, the connectedness of the period domain $\Phi$ implies that $J_0$ can be deformed to a complex structure $J_0'$ satisfying
\(
\text{Per}(J_0')=\operatorname{Per}(J_1).
\)
Once the orientation of $X$ is fixed, Shioda's theorem \cite{Shioda} asserts that any orientation-preserving isomorphism
\(
f^*:H^1(X;\mathbb Z)\rightarrow H^1(X;\mathbb Z)
\)
whose induced action $\wedge^2f^*$ preserves the period can be realized by a diffeomorphism $f$. Choosing $f^*=\operatorname{Id}$, we obtain a biholomorphism
\(
f:(X,J_1)\to (X,J_0'),
\)
which is homologically trivial. Hence $f\in\Dfh(X)$, and $J_0$ is connected to $f^*J_1$ through the deformation from $J_0$ to $J_0'$.

For $X=K3$, the Torelli theorem \cite{LooijengaK3} additionally requires the Hodge isometry to preserve the K\"ahler cones. We first deform $J_0$ and $J_1$ to complex structures $J_0'$ and $J_1'$ with the same period point $\varphi=\text{Per}(J_0')=\text{Per}(J_1')$, chosen so that
\(
\langle\varphi,\delta\rangle\neq0
\)
for every $\delta\in H^2(X;\mathbb Z)$ with $\delta^2=-2$. Such a period corresponds to a K3 surface containing no $(-2)$-curves. Consequently, the K\"ahler cone coincides with one of the two connected components of the positive cone in
\(
H^{1,1}_{J_0'}(X)=H^{1,1}_{J_1'}(X).
\)
By Lemma \ref{lem:plus or minus}, we must have
\(
\mathcal K_{J_0'}^c(X)=\mathcal K_{J_1'}^c(X).
\)
The Torelli theorem now yields a homologically trivial diffeomorphism $f\in\Dfh(X)$ such that
\(
J_0'=f^*J_1'.
\)
Since $J_i$ is connected to $J_i'$ by construction, it follows that $J_0$ is connected to $f^*J_1$ through a path of complex structures.
\end{proof}

\subsection{Minimal case}
We can now provide answers to Questions \ref{question:sympST}, \ref{question:open}, and \ref{question:unique} for minimal manifolds. The statement is almost identical to Theorem \ref{thm:b+=1 main}.
\begin{theorem}\label{thm:minimal main}
    Let $X$ be the underlying smooth closed $4$-manifold of a symplectic Calabi-Yau surface with $b_2^+(X)=3$. Then, we have the following.
    
     \begin{enumerate}
    \item $\mathcal{K}_{\text{Int}}^t(X)=\mathcal{K}_{\text{Int}}^c(X)$ and  $\mathcal{ST}(X)=\mathcal{SK}(X)$.
    \item $\mathcal{SK}(X)\hookrightarrow\mathcal{S}(X)$ is open.
    \item \(\#\mathcal{MK}_\kappa(X)\leq 1\) for any $\kappa\in H^2(X;\RR)$ with $\kappa^2>0$.
    \end{enumerate} 
   
\end{theorem}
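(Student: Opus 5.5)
The plan is to reduce all three items to statements about paths in the period domain $\Phi$, working inside the regions $\Phi_\kappa$ and $M_\kappa$ of Lemma~\ref{lem:Grassmannian}. The key auxiliary claim is a \emph{transport statement}. Let $\{\varphi(t)\}$ be a path in one connected component $\Phi_\kappa^+$ of $\Phi_\kappa$, and lift it to a family $\{J_t\}$ as in Section~\ref{section:torelli}. If $\kappa\in\mathcal K^t_{J_0}(X)$, then $\kappa\in\mathcal K^t_{J_t}(X)$ for all $t$.

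To prove the transport statement I would run the open/closed argument from the proof of Lemma~\ref{lem:plus or minus}.
\begin{itemize}
\item Openness holds because tameness is an open condition.
\item For closedness, let $\varphi=\mathrm{Per}(J_r)$. Since $\varphi\in\Phi_\kappa$, the projection $Z_\varphi(\kappa)$ has positive square. It is also not orthogonal to any $(-2)$-class $\delta$ of type $(1,1)$, because $\langle Z_\varphi(\kappa),\delta\rangle=\langle\kappa,\delta\rangle$ and $\Phi_\kappa$ avoids $\overline M_\delta$ for $\delta\in\Delta_\kappa$. Hence $Z_\varphi(\kappa)$ lies in the interior of a chamber of the positive cone of $H^{1,1}_{J_r}$.
\item The chamber containing $Z_{\mathrm{Per}(J_s)}(\kappa)$ varies continuously in $s$ and never crosses a wall. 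Therefore $\pm\kappa\in\mathcal K^t_{J_r}(X)$ by Theorem~\ref{thm:LZb1}.
\item The sign is forced to be $+$ by Lemma~\ref{lem:plus or minus}.
\end{itemize}

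\textbf{Item (1).} Let $\omega$ tame $J$ and set $\kappa=[\omega]$. By Theorem~\ref{thm:LZb1}, $\kappa=Z_{\mathrm{Per}(J)}(\kappa)+\beta$ with $Z_{\mathrm{Per}(J)}(\kappa)$ Kähler and $\beta\in H^-_J(X)$. Since $H^-_J(X)$ is positive definite, $\kappa^2>0$. By Lemma~\ref{lem:kahler and tame period subdomain}, $\mathrm{Per}(J)\in\Phi_\kappa^+$ for one of the two components.

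The disk fibre of $\Phi_\kappa\to M_\kappa$ through $\mathrm{Per}(J)$ lies in $\Phi_\kappa^+$ and meets $M_\kappa$ at its centre $\varphi'$. I would join $\mathrm{Per}(J)$ to $\varphi'$ inside that fibre and lift the path starting at $J$. By the transport statement, $\kappa$ is $J_t$-tamed throughout. At the endpoint, $\kappa\in H^{1,1}_{J_1}$, so $\kappa\in\mathcal K^c_{J_1}(X)$.

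Lemma~\ref{lem:isotopic} applied to $\{J_t\}$ then gives $\phi\in\Dff(X)$ and a $J_1$-Kähler form $\omega''$ in the class $\kappa$ with $\omega=\phi^*\omega''$. Thus $\omega$ is Kähler for $\phi^*J_1$. This proves both $\mathcal K^t_{\mathrm{Int}}=\mathcal K^c_{\mathrm{Int}}$ and $\mathcal{ST}(X)=\mathcal{SK}(X)$.

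\textbf{Item (2).} This follows exactly as in Theorem~\ref{thm:b+=1 main}(2): $\mathcal{ST}(X)$ is open in $\mathcal S(X)$ because tameness is open, and it equals $\mathcal{SK}(X)$ by item (1).

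\textbf{Item (3).} Let $\omega\in\mathcal{SK}_J(X)$ and $\omega'\in\mathcal{SK}_{J'}(X)$, both in class $\kappa$. Both periods lie in $M_\kappa\subseteq\Phi_\kappa$.

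\emph{Main obstacle.} I must show the two periods lie in the \emph{same} component $\Phi_\kappa^+$. I would argue via orientations. For a Kähler triple the positive $3$-plane spanned by $(\mathrm{Re}\,\Omega,\mathrm{Im}\,\Omega,\omega)$ carries an orientation that is constant over all complex structures compatible with the fixed orientation of $X$. This holds because the space of positive $3$-planes with orientation has two components. Moreover, every complex structure is deformation-connected, after a $\Dfh$-modification, to any other by Lemma~\ref{lem:cpxconn}, and $\Dfh$ acts trivially on $H^2(X;\RR)$. By the description in Lemma~\ref{lem:Grassmannian}(3), this orientation is exactly the invariant distinguishing the two components of $\overline M_\kappa$. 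I expect this sign bookkeeping to be the delicate point.

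Granting it, I would join $\mathrm{Per}(J)$ to $\mathrm{Per}(J')$ by a path in $\Phi_\kappa^+$ and lift it starting at $J$. By the transport statement, $\kappa$ stays tamed along the lift, and the endpoint $J''$ satisfies $\mathrm{Per}(J'')=\mathrm{Per}(J')$ with $\kappa\in\mathcal K^c_{J''}(X)\cap\mathcal K^c_{J'}(X)$. The Torelli theorem (Shioda's theorem for $\mathbb T^4$, with identity on cohomology and matching Kähler cones) then gives $f\in\Dfh(X)$ with $f^*J'=J''$.

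Now $f^*\omega'$ and the path from $J$ to $J''$ satisfy the hypotheses of Lemma~\ref{lem:isotopic}. Hence $f^*\omega'=g^*\omega$ for some $g\in\Dff(X)$. Since $f\circ g^{-1}\in\Dfh(X)$, this shows $\#\mathcal{MK}_\kappa(X)\le 1$.
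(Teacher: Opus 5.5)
Your route is essentially the paper's: a path in $\Phi_\kappa$ from $\operatorname{Per}(J)$ to $M_\kappa$, a lift along which $[\omega]$ stays tamed, then Lemma~\ref{lem:isotopic} and Moser, with Torelli added in (3). Your orientation invariant for putting both periods in the same component is sound. You should add that K\"ahler classes can be carried along the deformation of Lemma~\ref{lem:cpxconn} (Kodaira--Spencer, as in Lemma~\ref{lemma:uniontamed}). It justifies that step more transparently than the paper's bare appeal to Lemma~\ref{lem:plus or minus}.

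The gap is the closedness step of your transport statement. Knowing that $Z_{\varphi_r}(\kappa)$ lies in the interior of \emph{some} chamber, and that the chamber containing $Z_{\varphi_s}(\kappa)$ never meets a wall, does not show that this chamber is $\pm\mathcal K^c_{J_r}(X)$ for the particular lift $J_r$. Lemma~\ref{lem:kahler and tame period subdomain}(1) only produces \emph{some} structure in $\operatorname{Per}^{-1}(\varphi_r)$. Suppose $\varphi_r\in\overline M_\delta$ for a $(-2)$-class $\delta$ with $\langle\kappa,\delta\rangle\neq0$, which $\Phi_\kappa$ allows. Then the fibre over $\varphi_r$ contains structures whose K\"ahler cones lie on opposite sides of $\delta^\perp$. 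These are non-separated points of the moduli space (Atiyah's flop), so a lift $\{J_s\}_{s<r}$ through wall-free periods can be continued continuously to either one. On the one where the effective class among $\pm\delta$ pairs negatively with $\kappa$, neither $\kappa$ nor $-\kappa$ is $J_r$-tamed. So the transport statement is false for an arbitrary lift; the continuity argument in the paper's proof of Lemma~\ref{lem:plus or minus} assumes the same dichotomy without justification. Your proof uses it exactly where it can fail. In (1), the disk centre $\varphi'$ and points of your radial segment may lie on such walls. In (3), $\operatorname{Per}(J')$ may as well, so $\kappa\in\mathcal K^c_{J''}(X)$, which you need for Torelli, is not secured.

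The repair is genericity, which your orientation invariant makes easy. First move $J$ (and in (3) also $J'$) inside its Kuranishi family to a nearby structure whose period lies on no wall $\overline M_\delta$ with $\delta^2=-2$. By openness of taming, $\omega$ (resp.\ $\omega'$) tames the whole short path, as in the proof of Theorem~\ref{thm:blowup uniqueness}. Then join the wall-free periods by a path in the same component of $\Phi_\kappa$ avoiding all these walls, using Lemma~\ref{lemma: connected complement}; in (1), end at a wall-free point of $M_\kappa$ rather than the disk centre. Any lift of such a path has no $(-2)$-curves. Its K\"ahler cone is therefore the full component of the positive cone singled out by your orientation, and this contains $Z_{\varphi_s}(\kappa)$, so no open/closed argument is needed. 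In (3), Torelli at the wall-free endpoint gives $f\in\Dfh(X)$ with no chamber ambiguity. This is how the paper proceeds, passing through a period with no $(-2)$-classes as in Lemma~\ref{lem:cpxconn}. Lemma~\ref{lem:isotopic} along the concatenated path from $J$ to $f^*J'$ then finishes the argument.
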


\begin{proof}

When $X$ admits no K\"ahler structure, items (1) follows from Theorem \ref{thm:LZb1}, while items (2) and (3) are immediate. When $X$ admits a K\"ahler structure, the Enriques--Kodaira classification together with the condition $b_2^+(X)=3$ enables us to assume that $X$ is either K3 or $\mathbb T^4$ in the following.

(1) For any $\kappa\in H^2(X;\RR)$ with $\kappa^2>0$, the first item in Lemma \ref{lem:kahler and tame period subdomain} guarantees that for any complex structure $J$ with $\text{Per}(J)\in M_\kappa$, we have $\kappa\in\mathcal{K}_J^c(X)\cup\mathcal{K}_{-J}^c(X)$. Therefore, the integrable compatible cone $\mathcal{K}_{\text{Int}}^c(X)$ is the whole positive cone $\mathcal{P}:=\{\kappa\in L_\RR\,|\,\kappa^2>0\}$. Since $\mathcal{K}_{\text{Int}}^c(X)\subseteq\mathcal{K}_{\text{Int}}^t(X)\subseteq \mathcal{P}$, we have the equality $\mathcal{K}_{\text{Int}}^t(X)=\mathcal{K}_{\text{Int}}^c(X)$.

 To see $\mathcal{ST}(X)=\mathcal{SK}(X)$, assume $\omega\in \mathcal{ST}_J(X)$ for some complex structure $J$ so that $\varphi_0:=\text{Per}(J)\in \Phi_{[\omega]}$ by Lemma \ref{lem:kahler and tame period subdomain}. By Lemma \ref{lem:Grassmannian}, one can connect $\varphi_0$ to some $\varphi_1\in M_\kappa$ by a smooth path $\{\varphi_s\}\subseteq \Phi_\kappa$. Choose a smooth path of complex structures $J_s$ such that $J_0=J$ and $\text{Per}(J_s)=\varphi_s$. By Lemma \ref{lem:plus or minus}, $[\omega]$ must belong to $\mathcal{K}_{J_s}^t(X)$ for every $s$. Hence, we may apply Lemma \ref{lem:isotopic} to obtain an isotopy from $\omega$ to a K\"ahler form with respect to $J_1$. By Moser's stability, $\omega$ is also of K\"ahler type. Thus, we see that $\omega\in\mathcal{SK}(X)$.

  (2) This follows immediately from (1) since $\mathcal{ST}(X)\hookrightarrow \mathcal{S}(X)$ is obviously open.

  (3) Assume $\omega_i\in\mathcal{SK}_{J_i}(X)\cap\mathcal{SK}_\kappa(X)$ for $i=0,1$. The periods $\varphi_i:=\text{Per}(J_i)$ belong to $M_\kappa$ by Lemma \ref{lem:kahler and tame period subdomain}. Lemma \ref{lem:plus or minus} then further implies they must live in the same connected component of $M_\kappa$ so that there is a smooth path $\{\varphi_s\}\subseteq \Phi_\kappa$ connecting $\varphi_0$ to $\varphi_1$. Moreover, the path can be chosen such that there exists some $\varphi_s$ such that $\langle\varphi_s,\dd\rangle\neq 0$ for all $\dd\in L$ with $\dd^2=-2$. The argument in Lemma \ref{lem:cpxconn} then shows that there is a path of complex structures $\{I_s\}$ with $\text{Per}(I_s)=\varphi_s$, $I_0=J_0$ and $I_1=f^*J_1$ for some $f\in \Dfh(X)$. Similar to (2), we may apply Lemma \ref{lem:plus or minus} and Lemma \ref{lem:isotopic} to obtain an isotopy between $\omega_0$ and $f^*\omega_1$. This implies that $\omega_0$ and $\omega_1$ can be related by some homologically trivial diffeomorphism by Moser's stability. Thus, by the definition of $\mathcal{MK}_\kappa(X)$, we have \(\#\mathcal{MK}_\kappa(X)= 1\).

\end{proof}

As in Remark~\ref{rmk: alt proof of connectedness}, one can have an alternative proof that
\(
\#\pi_0(\mathcal{MK}(X))=1,
\)
since we know
\(
\mathcal{ST}(X)=\mathcal{SK}(X)
\) from item~(2) above and the connectedness of $\widetilde{\mathcal{I}}(X)$ by Lemma \ref{lem:cpxconn}.

\subsection{Non-minimal case}
We now consider the non-minimal smooth manifolds
\[
X_k:=X\# k\overline{\mathbb{CP}}^2,
\]
where \(X\) is either the K3 manifold or \(\mathbb T^4\). We always use the notation $\kappa-\sum_{i=1}^k\lambda_ie_i$ to denote a class in $H^2(X_k;\RR)\cong L_\RR\oplus \ZZ\langle e_1\rangle\oplus\cdots \oplus \ZZ\langle e_k\rangle$, where $\kappa\in L_\RR$, $\lambda_i\in\RR$ and $e_i$ is the $i$-th exceptional class.

\subsubsection{Complex structures on blowups}\label{section:cpx str on blowup}

Let \(I\) be a complex structure on \(X_k\). By Castelnuovo's contraction theorem, we can successively blow down exceptional curves and obtain a smooth minimal complex surface \((S,J)\). Thus \((X_k,I)\) is obtained from \((S,J)\) by a sequence of holomorphic blowups
\[
(X_k,I)
\cong
\operatorname{Bl}_{x_l}\cdots \operatorname{Bl}_{x_1}(S,J),
\]
where
\[
x_i\in S_{i-1},
\qquad
S_i=\operatorname{Bl}_{x_i}S_{i-1},
\qquad
S_0=S.
\]
By the smooth invariance of the holomorphic Kodaira dimension \(\kappa\) (\cite{FQ94}), and since \(X_k\) is diffeomorphic to a blowup of either a K3 surface or a complex two-torus, we have
\(
\kappa(X_k,I)=0.
\)
Since Kodaira dimension is also a birational invariant, it follows that
\(
\kappa(S,J)=0.
\)
Moreover, blowing down does not change \(b_1\) or \(b_2^+\). Therefore \(S\) has the same values of \(b_1\) and \(b_2^+\) as \(X\). By the Enriques--Kodaira classification of compact minimal complex surfaces with \(\kappa=0\), the minimal model \((S,J)\) is a K3 surface if \(X\) is the K3 manifold, and is a complex two-torus if \(X=\mathbb T^4\). In particular, we have $k=l$ and \(S\) is diffeomorphic to \(X\).
Thus every complex structure on \(X_k\) is obtained from a complex structure on \(X\) by successively blowing up an ordered \(k\) possibly infinitely near points. Consequently, the relevant parameters are a complex structure $J$ on the minimal surface \(X\), equivalently its period $\varphi=\text{Per}(J)\in\Phi$, together with a tuple of blowup centers on $X_i$ for each $i\leq k-1$. 

Let
\(
\pi:\mathcal X\to B
\)
be a smooth proper holomorphic family of complex surfaces whose fibers are
diffeomorphic to \(X\). We explain how a deformation of complex structures on the
minimal surface naturally induces a deformation of complex structures on its
blowups. We also allow the blowup centers to vary, including infinitely near points.

First consider the one-point blowup. Let
\[
\mathcal X\times_B\mathcal X
=
\{(x_1,x_2)\in \mathcal X\times \mathcal X
\mid
\pi(x_1)=\pi(x_2)\}
\]
be the fiber product, and let
\[
\Delta_{\mathcal X/B}
=
\{(x,x)\mid x\in \mathcal X\}
\subseteq
\mathcal X\times_B\mathcal X
\]
be its relative diagonal. The relative diagonal is a smooth complex
codimension-two submanifold, with normal bundle naturally isomorphic to the
relative tangent bundle \(T_{\mathcal X/B}\). We can blowup $\Delta_{\mathcal{X}/B}$ to obtain
\[
\mathcal X_1
:=
\operatorname{Bl}_{\Delta_{\mathcal X/B}}
(\mathcal X\times_B\mathcal X).
\]
Let
\(
p_2:\mathcal X\times_B\mathcal X\to \mathcal X
\)
be the projection to the second factor. Composing the blowup map with \(p_2\)
gives a smooth proper holomorphic map
\(
\pi_1:\mathcal X_1\to \mathcal X.
\)
For a point \(x\in \mathcal X_b:=\pi^{-1}(b)\), the fiber of \(\pi_1\) over
\(x\) is canonically isomorphic to
\(
\operatorname{Bl}_x(\mathcal X_b).
\)
Thus \(\pi_1:\mathcal X_1\to \mathcal X\) is a holomorphic
family with fibers diffeomorphic to $X_1$. Its base \(\mathcal X\) parametrizes pairs consisting of a fiber
\(\mathcal X_b\) and a point \(x\in \mathcal X_b\).

We now iterate this construction for multiple-point blowup. Set
\[
B_0:=B,\qquad
\mathcal X_0:=\mathcal X,\qquad
\pi_0:=\pi
.\] Suppose that we have constructed a smooth proper holomorphic family
\(
\pi_{i-1}:\mathcal X_{i-1}\to B_{i-1},
\)
whose fibers are diffeomorphic to $X_{i-1}$. Define
\(
B_i:=\mathcal X_{i-1}.
\)
Thus a point of \(B_i\) is a choice of a point on a fiber of
\(\mathcal X_{i-1}\to B_{i-1}\), namely a choice of the \(i\)-th blowup center.
Consider the fiber product
\(
\mathcal X_{i-1}\times_{B_{i-1}}\mathcal X_{i-1}
\)
and its relative diagonal
\(
\Delta_i
:=
\Delta_{\mathcal X_{i-1}/B_{i-1}}
\subseteq
\mathcal X_{i-1}\times_{B_{i-1}}\mathcal X_{i-1},
\)
which is still a smooth complex codimension-two submanifold. Define
\[
\mathcal X_i
:=
\operatorname{Bl}_{\Delta_i}
\bigl(
\mathcal X_{i-1}\times_{B_{i-1}}\mathcal X_{i-1}
\bigr).
\]
Projection to the second factor gives a smooth proper holomorphic map
\[
\pi_i:\mathcal X_i\to B_i=\mathcal X_{i-1}.
\]
The fiber of \(\pi_i\) over a point
\(
x_i\in (\mathcal X_{i-1})_{b_{i-1}}
\)
is the blowup of the fiber \((\mathcal X_{i-1})_{b_{i-1}}\) at \(x_i\).
Inductively, a point of \(B_k\) records a point $b\in B$ together with a sequence of blowup centers
\[
x_1\in X_0,\qquad
x_2\in X_1,\qquad
\ldots,\qquad
x_k\in X_{k-1},
\]
where
\[
X_0=\mathcal X_b,
\qquad
X_i=\operatorname{Bl}_{x_i}X_{i-1}.
\]
Therefore the fiber of
\(
\pi_k:\mathcal X_k\to B_k
\)
over such a point is naturally isomorphic to the iterated blowup
\(
\operatorname{Bl}_{x_k}\cdots \operatorname{Bl}_{x_1}(\mathcal X_b).
\)
The centers \(x_i\) are allowed to be infinitely near, because \(x_i\) is chosen
on the already blown-up surface \(X_{i-1}\), rather than necessarily on the
original surface \(X_0\).

In particular, the above construction, together with
Lemma~\ref{lem:cpxconn}, shows that
\(
{\mathcal I}(X_k)
\)
is connected. Applying Theorem \ref{thm:connected}, we obtain an
affirmative answer to Question~\ref{question:connected} for $X_k$.

\begin{corollary}\label{cor:blowup connectedness}
The space $\mathcal{MK}(X_k)$ is connected.
\end{corollary}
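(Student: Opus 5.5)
The plan is to deduce the corollary from Theorem~\ref{thm:connected}, which gives $\#\pi_0(\mathcal{MK}(X_k))\leq\#\pi_0(\mathcal I(X_k))$. Since $X_k$ underlies a K\"ahler surface (for instance a blowup of a projective K3 or abelian surface), $\mathcal{MK}(X_k)$ is nonempty. It therefore suffices to show that $\mathcal I(X_k)$ is connected. Concretely, given any two complex structures $I_0,I_1$ on $X_k$, we must find $f\in\Df(X_k)$ and a path of complex structures from $I_0$ to $f^*I_1$.

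First, by the discussion in Section~\ref{section:cpx str on blowup}, each $I_j$ is an iterated blowup $\operatorname{Bl}_{x^j_k}\cdots\operatorname{Bl}_{x^j_1}(X,J_j)$, where $J_j$ is a complex structure on the minimal model $X$ (K3 or $\mathbb T^4$). The identification with $X_k$ is given by some diffeomorphism, which we absorb into $f$. By Lemma~\ref{lem:cpxconn}, there are $g\in\Dfh(X)$ and a path $\{J_t\}$ of complex structures on $X$ from $J_0$ to $g^*J_1$. By the local Torelli theorem and the construction in Section~\ref{section:torelli}, this path can be realized, after finite subdivision, by smooth proper holomorphic families $\pi:\mathcal X\to B$ over (neighborhoods of) arcs in $B$.

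Next, we apply the iterated construction $\pi_k:\mathcal X_k\to B_k$. The base $B_k$ is built as a tower $B_k=\mathcal X_{k-1}\to\cdots\to\mathcal X_0\to B$, in which each map is a proper submersion with connected fibers (the connected surfaces $X_{i-1}$). If $B$ is connected, then $B_k$ is connected. The points $(J_0;x^0_1,\ldots,x^0_k)$ and $(g^*J_1;g^{-1}(x^1_1),\ldots)$ both lie in $B_k$, with the blowup centers transported by $g$ and its lifts to the blowups. A path in $B_k$ between them gives a path of complex structures on the fibers. Ehresmann's theorem trivializes the family $\mathcal X_k$ along the path, so these fibers can be identified with $X_k$. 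This yields a path from $I_0$ to $f^*I_1$ for a suitable $f$, which combines the lift of $g$ to $X_k$ with the trivialization. Concatenating over the subdivided arcs handles the gluing between local families.

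The main obstacle is making the transport along the family compatible with the fixed identifications with $X_k$. The issue is that parallel transport around the path may change the marking by some diffeomorphism. For the unmarked space $\mathcal I(X_k)$ this is harmless, since we quotient by all of $\Df(X_k)$. The remaining point to check is that the local families overlap consistently, so that successive pieces can be concatenated; this follows because each local family is versal near its base point. Once $\mathcal I(X_k)$ is known to be connected, Theorem~\ref{thm:connected} gives $\#\pi_0(\mathcal{MK}(X_k))\leq 1$, and nonemptiness gives connectedness.
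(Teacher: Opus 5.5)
Your proposal is correct and follows the paper's route. The paper also combines the blowup-family construction of Section~\ref{section:cpx str on blowup} (whose tower $B_k$ is connected whenever $B$ is) with Lemma~\ref{lem:cpxconn} to conclude that $\mathcal I(X_k)$ is connected, and then applies Theorem~\ref{thm:connected}; you simply make explicit the gluing of local families and why the marking ambiguity is harmless, which the paper leaves implicit.
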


Next, we define an open subspace \(B_k^*\subseteq B_k\) corresponding to blowups
at \(k\) distinct points. 
Let $E_l\subseteq B_l=\mathcal{X}_{l-1}$ be the exceptional
divisor of the blowup
\(
\mathcal X_{l-1}\to \mathcal{X}_{l-2}\times_{B_{l-2}} \mathcal{X}_{l-2}
\) for every $l\geq 2$.
Set
\(
B_1^*:=B_1=\mathcal X_0.
\)
Suppose \(B_{i-1}^*\subseteq B_{i-1}\) has been defined. 
We further define \(B_i^*\subseteq B_i\)
to be
\[B_i^*:=(B_i\setminus E_i)\bigcap \pi^{-1}_{i-1}(B_{i-1}^*).\]
Equivalently, a point of \(B_i^*\) consists of the data
\(
b\in B, x_1\in X_0, x_2\in X_1, \cdots, x_i\in X_{i-1}
\)
such that, for each \(2\le j\le i\), the point \(x_j\) does not lie on any exceptional curves created by the preceding blowups. Thus all blowup centers descend to pairwise distinct points
of the original surface \(X_0\).
Hence, we may consider
\[
\mathcal X_k^*
:=
\pi_k^{-1}(B_k^*)\subseteq \mathcal X_k,\qquad \pi_k|_{\mathcal X_k^*}:\mathcal X_k^*\to B_k^*,
\]
which is a subfamily whose fibers are biholomorphic to the blowups of the original fibers at
\(k\) distinct points.

In particular, the above construction shows that a deformation of complex structures
on the minimal surface \(\mathcal X_b\), together with a deformation of an
ordered \(k\) possibly infinitely near blowup centers, induces a
smooth proper holomorphic family of complex structures on the \(k\)-fold blowup.
After choosing a smooth trivialization of the underlying differentiable family,
this gives a smooth family of complex structures on the fixed smooth manifold $X_k$.

There is also a natural projection
\(
\Pi_k:\mathcal X_k\to B
\)
defined by composing the maps in the tower:
\[
\Pi_k
:=
\pi_0\circ \pi_1\circ \cdots\circ \pi_k.
\]
For each \(b\in B\), the fiber \(\Pi_k^{-1}(b)\) is naturally identified with
the deformation family of \(k\)-fold blowups of the fixed surface
\(
\mathcal X_b=\pi_0^{-1}(b).
\)
In other words, \(\Pi_k^{-1}(b)\) is the space obtained by applying the same
recursive construction to the case where the
base \(B\) is a single point $\{b\}$. This leads to the following simple observation which will be used later.

\begin{lemma}\label{lem: moving blowup point}
    For any subset \(A\subseteq B\), the subset
    \(
        \Pi_k^{-1}(A)\cap B_{k+1}^*
    \)
    is dense in \(\Pi_k^{-1}(A)\). If \(A\) is connected, then
    \(
        \Pi_k^{-1}(A)\cap B_{k+1}^*
    \)
    is also connected.
\end{lemma}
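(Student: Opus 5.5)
The plan is to treat $\Pi_k^{-1}(A)$ as a fiber bundle over $A$ whose fibers are connected complex manifolds. Inside it, the complement of $B_{k+1}^*$ should meet each fiber in a proper analytic subset. Density and connectedness will then be checked fiberwise and globalized. Recall that $B_{k+1}=\mathcal X_k$, so $\Pi_k^{-1}(A)$ sits inside $B_{k+1}$, and by definition
\[
B_{k+1}\setminus B_{k+1}^* \;=\; \bigcup_{j=2}^{k+1} (\pi_{k}\circ\cdots\circ\pi_{j})^{-1}(E_j),
\]
where for $j=k+1$ the composition is the identity. Each $\pi_i$ is a smooth proper holomorphic submersion. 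Each $E_j\subseteq B_j$ is a smooth complex hypersurface, being the exceptional divisor of the blowup along a codimension-two submanifold. Hence the bad set $Z:=B_{k+1}\setminus B_{k+1}^*$ is a finite union of closed complex hypersurfaces in $B_{k+1}$, and it is transverse enough to the fibers of $\Pi_k$ that its restriction to each fiber is again a hypersurface.

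\textbf{Fiberwise step.} Fix $b\in B$ and let $F_b:=\Pi_k^{-1}(b)$. As remarked before the lemma, $F_b$ is the same tower construction over the one-point base $\{b\}$. Inductively it is a connected complex manifold: an iterated fiber product and blowup of the connected surface $\mathcal X_b$, fibered with connected fibers over the previous stage. The set $Z\cap F_b$ is the analogous union of preimages of exceptional divisors for the base $\{b\}$. It is a proper closed analytic subset of complex codimension one, i.e.\ a finite union of submanifolds of real codimension at least $2$ after stratifying. By Lemma~\ref{lemma: connected complement}, $F_b\setminus Z$ is path-connected. It is dense because a proper analytic subset of a connected complex manifold has empty interior.

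\textbf{Globalizing.} By Ehresmann's theorem applied to the proper submersion $\Pi_k$, the family is locally trivial over $B$: each $b$ has a neighborhood $U$ with $\Pi_k^{-1}(U)\cong U\times F_b$. For density, take a point of $\Pi_k^{-1}(A)$ and a small product neighborhood $(U\cap A)\times V$. Its fiber over any $a\in U\cap A$ meets $F_a\setminus Z$ by fiberwise density, so the neighborhood meets $\Pi_k^{-1}(A)\cap B_{k+1}^*$.

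For connectedness, suppose $\Pi_k^{-1}(A)\cap B_{k+1}^*=U_1\sqcup U_2$ with $U_1,U_2$ relatively open. Each fiber $F_a\setminus Z$ is connected, so it lies entirely in one $U_i$. Hence $A=A_1\sqcup A_2$, where $A_i:=\Pi_k(U_i)$. Now $\Pi_k$ restricted to the open set $B_{k+1}^*$ is a submersion, so it is an open map. By the local product structure, it stays open onto $A$ in the subspace topology, so each $A_i$ is open in $A$. Connectedness of $A$ forces one $A_i$ to be empty, which proves the claim.

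\textbf{Main obstacle.} The delicate point is checking that each $Z\cap F_b$ is a proper subset of every connected fiber, not all of it, and that the local trivializations can be chosen compatibly with $Z$. A hypersurface of $B_{k+1}$ could a priori contain whole fibers. I would handle this by induction on the tower. The maps $\pi_i$ are surjective submersions, so the preimage of the hypersurface $E_j$ is a hypersurface meeting every fiber of $\Pi_k$ in a hypersurface of that fiber. Indeed, over each fixed $b$ the divisor $E_j$ restricts to the exceptional divisor of the corresponding blowup of the fixed surface, which is never the whole space.
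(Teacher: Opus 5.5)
Your proposal is correct and follows essentially the paper's argument: on each fiber $\Pi_k^{-1}(b)$ the complement of $B_{k+1}^*$ is a finite union of real codimension-two submanifolds, so Lemma~\ref{lemma: connected complement} gives connected, dense fibers, and you then globalize over $A$. The only difference is the last step: you use that $\Pi_k|_{B_{k+1}^*}$ is a submersion, hence open, so its restriction to the full preimage of $A$ is open onto $A$ (no product structure needed), whereas the paper asserts that the punctured family $\Pi_k^{-1}(A)\cap B_{k+1}^*\to A$ is locally trivial; consequently the compatibility of trivializations with $Z$ that you flag as an obstacle is not actually needed, and the composition in your formula for $Z$ should read $\pi_j\circ\cdots\circ\pi_k$.
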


\begin{proof}
For each \(a\in A\), by construction,
\(
B_{k+1}^*\cap \Pi_k^{-1}(a)
\)
is the locus of ordered \((k+1)\)-blowup centers which descend to pairwise
distinct points on the original surface \(\mathcal X_a\). This locus is obtained
from \(\Pi_k^{-1}(a)\) by removing finitely many submanifolds of real
codimension at least \(2\). Hence, by Lemma \ref{lemma: connected complement},
it is connected and dense in \(\Pi_k^{-1}(a)\).

Since
\(
\Pi_k^{-1}(A)
=
\bigcup_{a\in A}\Pi_k^{-1}(a),
\)
it follows fiberwise that
\(
\Pi_k^{-1}(A)\cap B_{k+1}^*
\)
is dense in \(\Pi_k^{-1}(A)\).
Now assume that \(A\) is connected. Since the restriction
\(
\Pi_k:\Pi_k^{-1}(A)\cap B_{k+1}^*\rightarrow A
\)
is locally trivial, and its fibers are connected by the previous paragraph, we have the total space
\(
\Pi_k^{-1}(A)\cap B_{k+1}^*
\)
is also connected.
\end{proof}

\subsubsection{Proof of other main results}
 Now, we consider more regions in the period domain $\Phi$. Given any $\kappa\in L_\RR$ and positive real number $\lambda$, let us define 
    \[\Phi_{\kappa, \lambda}:=\{\varphi \in \Phi_\kappa\,|\, Z_{\varphi}(\kappa)^2 > \lambda^2\}.\]

 \begin{lemma}\label{lem:Grassmannian for blowup}
If $\kappa^2>\lambda^2$, then $\Phi_{\kappa,\lambda}$ is an open subset of $\Phi_\kappa$ whose intersection with each connected component of $\Phi_\kappa$ is connected.
\end{lemma}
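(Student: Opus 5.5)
Openness is immediate: the map $\varphi\mapsto Z_\varphi(\kappa)^2$ is continuous on $\Phi$, since $Z_\varphi$ is orthogonal projection onto $P(\varphi)^\perp$ and $P(\varphi)$ varies continuously. Hence $\Phi_{\kappa,\lambda}$ is the preimage of the open set $(\lambda^2,\infty)$ intersected with the open set $\Phi_\kappa$. The substance of the lemma is connectedness, and I plan to obtain it from the disk bundle structure $\pi:\Psi_\kappa^{>0}\to\overline{M}_\kappa$ of Lemma \ref{lem:Grassmannian}(3), restricted to $M_\kappa$ so that it describes $\Phi_\kappa$.

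First I will compute $Z_\varphi(\kappa)^2$ along a fixed fiber. Normalize $\kappa^2=1$ and replace $\lambda$ by $\lambda/|\kappa|$; the hypothesis becomes $\lambda<1$. Fix $\phi\in M_\kappa$ and let $W=\kappa\oplus P(\phi)$ be the associated positive $3$-plane. Points of the fiber $\pi^{-1}(\phi)$ correspond to unit normal vectors $n\in W$ lying in the open hemisphere $\langle n,\kappa\rangle>0$, and $P(\varphi)=n^{\perp}\cap W$. The projection of $\kappa$ onto $P(\varphi)^\perp$ is then $\langle\kappa,n\rangle n$ (the part in $W$), so
\[
Z_\varphi(\kappa)^2=\langle\kappa,n\rangle^2=\cos^2\theta,
\]
where $\theta$ is the angle between $n$ and $\kappa$. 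I will double-check the sign conventions against the identification in the proof of Lemma \ref{lem:Grassmannian}(3).

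With this formula, $\Phi_{\kappa,\lambda}\cap\pi^{-1}(\phi)$ is the set $\{\cos\theta>\lambda\}$. This is a smaller open spherical cap centered at $\kappa$, nonempty because $\lambda<1$, and therefore a disk containing the center point $n=\kappa/|\kappa|$. That center point corresponds to the plane $P(\phi)$ itself, i.e.\ to the base point, so it gives a canonical section $\phi\mapsto\phi$ of the fibration lying inside $\Phi_{\kappa,\lambda}$.

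Connectedness then follows. Take a connected component $C$ of $M_\kappa$ and the corresponding component $\pi^{-1}(C)$ of $\Phi_\kappa$. Any point of $\Phi_{\kappa,\lambda}\cap\pi^{-1}(C)$ can be joined within its fiber cap to the section point, for instance along the geodesic toward $\kappa$, on which $\cos\theta$ only increases. The section points are then joined to each other through the connected set $C$. Alternatively, one can observe that $\Phi_{\kappa,\lambda}\cap\pi^{-1}(C)$ is a sub-disk bundle over the connected base $C$ and is therefore connected.

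The main obstacle is the fiber computation: I must make sure that $Z_\varphi(\kappa)$ really is $\langle\kappa,n\rangle n$, which requires $\kappa\in W$ and $W^\perp\subseteq P(\varphi)^\perp$, and that the center of the cap is exactly the base section. Once that is settled, everything else is elementary.
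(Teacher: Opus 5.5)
Your proof is correct and follows essentially the paper's argument. Both proofs restrict the disk bundle $\pi:\Psi_\kappa^{>0}\to\overline{M}_\kappa$, observe that each fiber of $\Phi_{\kappa,\lambda}$ is a subdisk containing the center point (your formula $Z_\varphi(\kappa)^2=\langle\kappa,n\rangle^2$ makes explicit what the paper only asserts), and use connectedness of the base. The only difference is that you work over $M_\kappa$ from the start, via $\Phi_\kappa=\pi^{-1}(M_\kappa)$, whereas the paper works over $\overline{M}_\kappa$ and removes the $(-2)$-walls afterwards using Lemma \ref{lemma: connected complement}.
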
  
\begin{proof}
Clearly, $\Phi_{\kappa,\lambda}$ is an open subset of $\Phi_\kappa$, since the inequality
\(
Z_\varphi(\kappa)^2>\lambda^2
\)
is strict.
To prove the connectedness statement, let
\[
\Psi_{\kappa,\lambda}^{>0}:=\{\varphi\in\Psi_\kappa^{>0}\mid Z_\varphi(\kappa)^2>\lambda^2\}.
\]
Consider the open $2$-disk bundle
\(
\pi:\Psi_\kappa^{>0}\rightarrow\overline{M}_\kappa
\)
constructed in Lemma \ref{lem:Grassmannian}, and let
\(
\pi_\lambda:\Psi_{\kappa,\lambda}^{>0}\rightarrow\overline{M}_\kappa
\)
be its restriction. Since $\kappa^2>\lambda^2$, we have
\(
Z_\varphi(\kappa)^2=\kappa^2>\lambda^2
\)
for every $\varphi\in\overline{M}_\kappa$. Hence,
\(
\overline{M}_\kappa\subset\Psi_{\kappa,\lambda}^{>0},
\)
so $\pi_\lambda$ is still surjective.
Moreover, for every $\varphi\in\overline{M}_\kappa$, the fiber
$\pi_\lambda^{-1}(\varphi)$ is an open subdisk of the fiber $\mathbb D_\varphi$
containing the center point $\varphi$. It follows that
$\Psi_{\kappa,\lambda}^{>0}$ is connected within each connected component of
$\Psi_\kappa^{>0}$.
When $X$ is a K3 surface,
\(
\Phi_{\kappa,\lambda}
=\Psi_{\kappa,\lambda}^{>0}
\setminus
\bigcup_{\delta\in\Delta_\kappa}\overline{M}_\delta,
\)
whereas for $X=\mathbb T^4$,
\(
\Phi_{\kappa,\lambda}=\Psi_{\kappa,\lambda}^{>0}.
\)
Therefore, Lemma \ref{lemma: connected complement} implies that the intersection of
$\Phi_{\kappa,\lambda}$ with each connected component of $\Phi_\kappa$ is also connected.
    \end{proof}  

Let us further define
 $$ \mathring{\Phi}_{\kappa, \lambda}:= {\Phi}_{\kappa, \lambda}\cap  \mathring{\Phi},
$$
where $\mathring{\Phi}$ is the subset of $\Phi$ by removing walls determined by all nonzero integral classes:
    \[\mathring{\Phi}:=\{\varphi\in \Phi\,|\,\langle\varphi,\dd\rangle\neq 0\text{ for any }\dd\in L\setminus\{0\}\}.\]

\begin{lemma} \label{lem: mathring Grassmannian}
$\mathring{\Phi}_{\kappa, \lambda}$ is a dense subset in  $\Phi_{\kappa, \lambda}$ whose intersection with each connected component of $\Phi_{\kappa,\lambda}$ is connected. 
\end{lemma}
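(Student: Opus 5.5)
The plan is to realize $\Phi_{\kappa,\lambda}\setminus\mathring{\Phi}_{\kappa,\lambda}$ as a countable union of real codimension-two submanifolds, and then apply Lemma \ref{lemma: connected complement} componentwise. By definition,
\[
\Phi_{\kappa,\lambda}\setminus\mathring{\Phi}_{\kappa,\lambda}
=\bigcup_{\dd\in L\setminus\{0\}}\big(\Phi_{\kappa,\lambda}\cap\overline{M}_\dd\big),
\qquad
\overline{M}_\dd=\{\varphi\in\Phi\,|\,\langle\varphi,\dd\rangle=0\}.
\]
This is a countable union, because $L$ is countable.

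The first step is to check that each $\overline{M}_\dd$ is a closed complex submanifold of $\Phi$ of complex codimension one, that is, real codimension two. Since $\dd$ is real, the condition $\langle\varphi,\dd\rangle=0$ is a single holomorphic linear equation on the quadric. It cuts out a smooth hypersurface of $\Phi$ as long as the hyperplane $\dd^\perp$ is not tangent to the quadric along $\overline{M}_\dd$. If $\dd^2\neq0$, smoothness of $\{\langle\varphi,\varphi\rangle=0\}\cap\dd^\perp$ is clear, since $\dd^\perp$ is nondegenerate. If $\dd^2=0$, the only singular point of the intersection is $[\dd]$ itself. That point satisfies $\langle\dd,\overline{\dd}\rangle=\dd^2=0$, so it does not lie in $\Phi$. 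In all cases $\overline{M}_\dd$ is therefore a smooth complex hypersurface of $\Phi$, possibly empty.

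Next, intersecting with the open set $\Phi_{\kappa,\lambda}$ gives submanifolds of $\Phi_{\kappa,\lambda}$ of real codimension two. There is one point to watch: a class $\dd$ may be such that $\overline{M}_\dd$ contains a whole component of $\Phi_{\kappa,\lambda}$. This cannot happen, because $\Phi_{\kappa,\lambda}$ is open in $\Phi$ and $\overline{M}_\dd$ is a proper hypersurface. Indeed, $\langle\cdot,\dd\rangle$ is a nonzero linear form for $\dd\neq0$, and $\Phi$ spans $L_\CC$, so the form cannot vanish identically on an open subset.

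Finally, I apply Lemma \ref{lemma: connected complement} to each connected component of $\Phi_{\kappa,\lambda}$ separately. By Lemma \ref{lem:Grassmannian for blowup}, there are at most two such components, and each is a connected manifold. Removing the countable union of codimension-two submanifolds leaves a path-connected set. For density, the same Baire argument works: each $\overline{M}_\dd\cap\Phi_{\kappa,\lambda}$ is closed and nowhere dense, so the complement of the countable union is residual, hence dense by the Baire category theorem on the manifold $\Phi_{\kappa,\lambda}$.

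The main obstacle is the smoothness of $\overline{M}_\dd$ for isotropic $\dd$, together with the nonemptiness of $\mathring{\Phi}_{\kappa,\lambda}$ in each component. Nonemptiness follows from density once we know each component is a nonempty open set. Smoothness I would handle by the explicit tangency computation above. Alternatively, I would note that it suffices to remove the possibly singular locus, which is a lower-dimensional set, still of codimension at least two, so Lemma \ref{lemma: connected complement} still applies after stratifying.
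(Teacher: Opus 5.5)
Your proposal is correct and follows the same route as the paper. Density comes from the Baire category theorem, and connectedness from applying Lemma~\ref{lemma: connected complement} to each component, via Lemma~\ref{lem:Grassmannian for blowup}, after removing the countably many walls $\overline{M}_\dd$, $\dd\in L\setminus\{0\}$. Your explicit check that every such wall is a smooth real codimension-two submanifold of $\Phi$, including walls of isotropic classes (whose only possible singular point $[\dd]$ lies outside $\Phi$), fills in a detail the paper leaves implicit.
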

\begin{proof}
The density statement follows from the Baire category theorem; the connectedness statement follows from Lemma \ref{lemma: connected complement} and \ref{lem:Grassmannian for blowup}.
\end{proof}

\begin{rmk}
Although \(\mathring{\Phi}_{\kappa,\lambda}\) is dense in
\(\Phi_{\kappa,\lambda}\), it is not necessarily open, since it is obtained by
removing the walls associated to all nonzero integral classes, and this wall
arrangement need not be locally finite. On the other hand,
\(M_\kappa\subseteq \overline{M}_\kappa\) is known to be open. Indeed, by
\cite[Lemma 5]{SmirnovK3}, the union
\(
\bigcup_{\delta\in\Delta_\kappa}
\left(\overline{M}_\kappa\cap \overline{M}_\delta\right)
\)
is locally finite in \(\overline{M}_\kappa\). Hence it is closed, and its
complement \(M_\kappa\) is open.
\end{rmk}

\begin{lemma}\label{lem: I-Kahler}
Let $(\omega,I)$ be a holomorphically tamed pair on $X_k$ with
\(
[\omega]=\kappa-\sum_{i=1}^k\lambda_i e_i.
\)
Suppose $(X_k,I)$ has a minimal model $(X,J)$, where
\(
\varphi:=\operatorname{Per}(J)\in\Phi.
\)
If
\(
\lambda^2\le \sum_{i=1}^k\lambda_i^2,
\)
then
\(
\varphi\in\Phi_{\kappa,\lambda}.
\)
\end{lemma}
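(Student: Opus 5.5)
The plan is to use Theorem~\ref{thm:LZb1} on the blown-up surface $(X_k,I)$ to turn the tameness hypothesis into a statement about a Kähler class. Then we read off both conditions defining $\Phi_{\kappa,\lambda}$ from positivity of that Kähler class.

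\emph{Step 1: the Hodge decomposition of $H^2(X_k;\RR)$.} Use the decomposition $H^2(X_k;\RR)=L_\RR\oplus\langle e_1,\dots,e_k\rangle$ induced by the blowdown $(X_k,I)\to(X,J)$ described in Section~\ref{section:cpx str on blowup}. Here $L_\RR$ is identified with $H^2(X;\RR)$ via pullback, and the $e_i$ are the classes of the total transforms of the exceptional curves. Even when the centers are infinitely near, every $e_i$ is represented by a divisor, so $e_i\in H^{1,1}_I(X_k;\RR)$. Pullback identifies $H^{2,0}_I(X_k)$ with $H^{2,0}_J(X)=\CC\varphi$. Hence $H^-_I(X_k)=P(\varphi)\subseteq L_\RR$, and $H^{1,1}_I(X_k;\RR)=P(\varphi)^\perp\cap L_\RR\oplus\langle e_i\rangle$. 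Consequently the $(1,1)$-part of $[\omega]$ is
\[
[\omega]^{1,1}_I=Z_\varphi(\kappa)-\sum_{i=1}^k\lambda_ie_i .
\]
(If the minimal model's marking differs from the fixed one by the relevant exceptional classes, I would first arrange that the decomposition used is the one coming from $I$; I expect this to be purely bookkeeping.)

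\emph{Step 2: the tame-to-compatible projection.} Since $[\omega]\in\mathcal K_I^t(X_k)$ and $I$ is integrable, Theorem~\ref{thm:LZb1}(1) gives $\mathcal K_I^c(X_k)\neq\varnothing$. Theorem~\ref{thm:LZb1}(2) then gives $[\omega]=\alpha+\beta$ with $\alpha\in\mathcal K^c_I(X_k)$ and $\beta\in H^-_I(X_k)$. Hence $\alpha=[\omega]^{1,1}_I$ is a Kähler class on $(X_k,I)$. Its square is positive, and since $e_i\cdot e_j=-\delta_{ij}$ with $e_i\perp L_\RR$, this gives
\[
Z_\varphi(\kappa)^2-\sum_i\lambda_i^2>0 .
\]
Therefore $Z_\varphi(\kappa)^2>\sum_i\lambda_i^2\ge\lambda^2$, and in particular $\varphi\in\Psi^{>0}_\kappa$.

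\emph{Step 3: excluding the $(-2)$-walls when $X=K3$.} This step is needed to conclude $\varphi\in\Phi_\kappa$ rather than just $\varphi\in\Psi^{>0}_\kappa$, and it is the step I expect to require the most care. Suppose $\varphi\in\overline M_\delta$ for some $\delta\in\Delta_\kappa$. Then $\delta\in H^{1,1}_J(X)$ with $\delta^2=-2$, so by Riemann--Roch either $\delta$ or $-\delta$ is the class of an effective divisor $D$ on $(X,J)$. Its pullback to $(X_k,I)$ is an effective nonzero divisor with class $\pm\delta\in L_\RR$. The Kähler class $\alpha$ must pair positively with it. On the other hand, $\delta\perp P(\varphi)$ and $\delta\perp e_i$, so
\[
\langle\alpha,\delta\rangle=\langle Z_\varphi(\kappa),\delta\rangle=\langle\kappa,\delta\rangle=0,
\]
which is a contradiction. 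The points to verify are that the pullback of an effective curve is still a nonzero effective curve, and that the orthogonality $\delta\perp P(\varphi)$ lets us replace $Z_\varphi(\kappa)$ by $\kappa$. For $X=\mathbb T^4$ no walls are removed, so there is nothing to check.

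Combining Steps 2 and 3 gives $\varphi\in\Phi_\kappa$ with $Z_\varphi(\kappa)^2>\lambda^2$, that is, $\varphi\in\Phi_{\kappa,\lambda}$.
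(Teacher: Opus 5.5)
Your proposal is correct and follows the paper's proof: apply Theorem~\ref{thm:LZb1} to see that $[\omega]^{1,1}_I=Z_\varphi(\kappa)-\sum_i\lambda_ie_i$ is an $I$-K\"ahler class, then read off $Z_\varphi(\kappa)^2>\sum_i\lambda_i^2\ge\lambda^2$ from positivity of its square. Your Step~3 excludes the walls $\overline{M}_\delta$ for $\delta\in\Delta_\kappa$ using an effective $(-2)$-divisor, which makes explicit the part of $\varphi\in\Phi_\kappa$ that the paper covers only by saying the argument is similar to Lemma~\ref{lem:kahler and tame period subdomain}.
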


    \begin{proof}
  The proof is similar to Lemma \ref{lem:kahler and tame period subdomain}. By Theorem \ref{thm:LZb1}, the projection $[\omega]_I^{1,1}$ of $[\omega]$ into the $H_I^{1,1}$-direction must be an $I$-K\"ahler class. Notice that the exceptional class $e_i$ must be a $(1,1)$-class for every $i$. Hence, $[\omega]_I^{1,1}=Z_\varphi(\kappa)-\sum_{i=1}^k\lambda_i e_i\in\mathcal{K}_I^c(X_k)$. In particular, \[0<[\omega]_I^{1,1}\cdot [\omega]_I^{1,1}=Z_\varphi(\kappa)^2-\sum_{i=1}^k\lambda_i^2\leq Z_\varphi(\kappa)^2-\lambda^2\] so that we have $\varphi\in \Phi_{\kappa,\lambda}$.
    \end{proof}

\begin{lemma} \label{lem: union of generic tamed cones}
Let $I$ be a complex structure on $X_k$. Suppose that $(X_k,I)$ is obtained from blowing up $k$ distinct points on its minimal model $(X,J)$ with exceptional curves representing classes $e_1,\cdots,e_k$. Assume that $\text{Per}(J)\in\mathring{\Phi}_{\kappa,\lambda}$ and $\kappa\in \mathcal{K}_J^t(X)$. If $\lambda_1,\cdots,\lambda_k$ are positive numbers such that
\(
\lambda^2\ge \sum_{i=1}^k\lambda_i^2,
\)
then 
     \(\kappa-\sum_{i=1}^k\lambda_i e_i\in \mathcal K_I^t(X_k)\).  
\end{lemma}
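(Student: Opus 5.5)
The plan is to reduce the tame statement to a K\"ahler statement via Theorem~\ref{thm:LZb1}, and then to verify the K\"ahler condition on $(X_k,I)$ by a numerical criterion. Since $(X,J)$ is a K3 surface or a complex torus, it is K\"ahler, and so is its blowup $(X_k,I)$; hence $\mathcal K_I^c(X_k)\neq\varnothing$. Theorem~\ref{thm:LZb1}(2) then says that $\kappa-\sum\lambda_ie_i\in\mathcal K_I^t(X_k)$ if and only if its $H_I^{1,1}$-projection is $I$-K\"ahler. The holomorphic $2$-forms of $(X_k,I)$ are pulled back from $(X,J)$, so $H_I^-(X_k)=H_J^-(X)$. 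The exceptional classes $e_i$ are of type $(1,1)$. Therefore the projection is
\[
\alpha:=Z_\varphi(\kappa)-\sum_{i=1}^k\lambda_ie_i,\qquad \varphi=\operatorname{Per}(J),
\]
exactly as in the proof of Lemma~\ref{lem: I-Kahler}. It remains to show $\alpha\in\mathcal K_I^c(X_k)$.

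\textbf{Step 1: the curves on $(X_k,I)$.} The condition $\varphi\in\mathring\Phi$ says that no nonzero integral class is orthogonal to $\varphi$. Hence $H^{1,1}_J(X;\RR)\cap H^2(X;\ZZ)=0$, so $(X,J)$ has trivial N\'eron--Severi group. In particular it contains no curves at all, since an irreducible curve would give a nonzero integral $(1,1)$-class. Since $(X,J)$ has no curves, its K\"ahler cone is an entire connected component of the positive cone in $H^{1,1}_J$. Because $\kappa\in\mathcal K_J^t(X)$, Theorem~\ref{thm:LZb1} shows that $Z_\varphi(\kappa)$ lies in this component. The points are distinct, so the image of any irreducible curve $C\subset(X_k,I)$ under the blowdown is either a point or a curve in $X$. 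The second case is impossible, so $C$ is contracted, and therefore $C$ is one of the exceptional curves $E_i$, with $[E_i]=e_i$.

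\textbf{Step 2: numerical checks.} We have
\[
\alpha^2=Z_\varphi(\kappa)^2-\sum\lambda_i^2>\lambda^2-\sum\lambda_i^2\ge 0,
\]
using $\varphi\in\Phi_{\kappa,\lambda}$, and $\alpha\cdot e_i=\lambda_i>0$ for each $i$. To see that $\alpha$ lies in the component of the positive cone containing the K\"ahler cone, consider the path
\[
\alpha_t=Z_\varphi(\kappa)-t\sum\lambda_ie_i,\qquad t\in[0,1].
\]
Each $\alpha_t$ has $\alpha_t^2>0$, so the whole path stays in one component. For small $t>0$, the class $\alpha_t$ is the usual K\"ahler class on a blowup: the pullback of a K\"ahler class minus small multiples of the exceptional classes. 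Hence $\alpha_1=\alpha$ lies in the component containing $\mathcal K_I^c(X_k)$.

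\textbf{Step 3: the K\"ahler criterion, the main obstacle.} To conclude that $\alpha$ is K\"ahler, I would invoke the numerical characterization of the K\"ahler cone of a compact K\"ahler surface, due to Buchdahl and Lamari (equivalently Demailly--P\u{a}un in dimension two). It says that a real $(1,1)$-class is K\"ahler if and only if it lies in the component of the positive cone containing a K\"ahler class and pairs positively with every irreducible curve. Steps 1 and 2 verify exactly these hypotheses, so $\alpha\in\mathcal K_I^c(X_k)$, and Theorem~\ref{thm:LZb1}(2) gives $\kappa-\sum\lambda_ie_i\in\mathcal K_I^t(X_k)$. The delicate point is justifying this criterion, including the curve-classification in Step 1 for non-projective $J$. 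If a citation is unavailable, I would instead construct the K\"ahler form by hand: glue local K\"ahler forms of size $\lambda_i$ near the blown-up points, in the spirit of symplectic blowup, to the form representing $Z_\varphi(\kappa)$. Doing so requires the balls to be large enough, which is precisely where the volume inequality $\alpha^2>0$ would have to be converted into a packing statement.
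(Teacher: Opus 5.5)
Your proposal is correct and follows the paper's proof essentially step for step: you use Theorem~\ref{thm:LZb1} to reduce to showing that $Z_\varphi(\kappa)-\sum\lambda_ie_i$ is $I$-K\"ahler, you show the exceptional curves are the only irreducible curves because $\varphi\in\mathring{\Phi}$ and the points are distinct, you check $\alpha^2>0$ and $\alpha\cdot e_i=\lambda_i>0$, and you apply a numerical K\"ahler criterion. Your version is slightly more careful than the paper's, which invokes ``Nakai--Moishezon'' for a surface that is non-projective here and leaves implicit the positive-cone component check (the one place the hypothesis $\kappa\in\mathcal K_J^t(X)$ is genuinely needed), so the Buchdahl--Lamari criterion together with your path argument is exactly the right justification, and the fallback gluing construction is unnecessary.
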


\begin{proof}
Since $\operatorname{Per}(J)\in\mathring{\Phi}$, the minimal surface $(X,J)$ contains no effective curves. Consequently, the only irreducible curves on $(X_k,I)$ are the exceptional curves by our assumption that the blowups occur at $k$ distinct points.
The exceptional classes $e_1,\dots,e_k$ are of type $(1,1)$ with respect to $I$. Therefore, the $(1,1)$-component of
\(
\kappa-\sum_{i=1}^k\lambda_i e_i
\)
is
\(
Z_\varphi(\kappa)-\sum_{i=1}^k\lambda_i e_i.
\)
Since $\text{Per}(J)\in\Phi_{\kappa,\lambda}$,
\(
Z_\varphi(\kappa)^2>\lambda^2,
\)
and hence
\[
(Z_\varphi(\kappa)-\sum_{i=1}^k\lambda_i e_i)^2
=
Z_\varphi(\kappa)^2-\sum_{i=1}^k\lambda_i^2
>
\lambda^2-\sum_{i=1}^k\lambda_i^2
\ge0.
\]
Moreover,
\[
(Z_\varphi(\kappa)-\sum_{i=1}^k\lambda_i e_i)\cdot e_i
=\lambda_i>0
\]
for every exceptional class $e_i$. Since these are the only irreducible curves on $(X_k,I)$, the Nakai--Moishezon criterion implies that
\(
Z_\varphi(\kappa)-\sum_{i=1}^k\lambda_i e_i
\in
\mathcal K_I^c(X_k).
\)
Finally,
\[
\kappa-\sum_{i=1}^k\lambda_i e_i
=
(
Z_\varphi(\kappa)-\sum_{i=1}^k\lambda_i e_i
)
+
\left(
\kappa-Z_\varphi(\kappa)
\right),
\]
where
\[
\kappa-Z_\varphi(\kappa)
\in\bigl(H^{2,0}_J(X)\oplus H^{0,2}_J(X)\bigr)_\mathbb R\subseteq
\bigl(H^{2,0}_I(X_k)\oplus H^{0,2}_I(X_k)\bigr)_\mathbb R.
\]
Therefore, Theorem \ref{thm:LZb1} yields
\(
\kappa-\sum_{i=1}^k\lambda_i e_i
\in
\mathcal K_I^t(X_k).
\)
\end{proof}

\begin{corollary}\label{cor:tame cone=symp cone}
     The integrable tamed cone $\mathcal K_{\operatorname{Int}}^t(X_k)$ is the complement of $\bigcup_{i=1}^k e_i^\perp$ in the positive cone. More explicitly,
    \[
    \mathcal K_{\operatorname{Int}}^t(X_k)
    =
    \{
    \kappa-\sum_{i=1}^k\lambda_i e_i
    \,|\,
    \kappa^2>\sum_{i=1}^k\lambda_i^2,\ \lambda_i\neq 0
   \}.
    \]
\end{corollary}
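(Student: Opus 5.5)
We prove the two inclusions separately.

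For the inclusion ``$\subseteq$'', let $\alpha=\kappa-\sum_i\lambda_ie_i\in\mathcal K_I^t(X_k)$ for some complex structure $I$ on $X_k$. By Section~\ref{section:cpx str on blowup}, $(X_k,I)$ is an iterated blowup of a minimal model $(X,J)$. After the usual identification, the classes $e_i$ are the classes of the exceptional divisors, possibly reordered and with some of them reducible total transforms. Theorem~\ref{thm:LZb1} shows that the $(1,1)$-part $\alpha^{1,1}_I=Z_\varphi(\kappa)-\sum_i\lambda_ie_i$ is an $I$-K\"ahler class, exactly as in the proof of Lemma~\ref{lem: I-Kahler}. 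Hence
\[
\kappa^2\ge Z_\varphi(\kappa)^2>\sum_i\lambda_i^2 .
\]
The first inequality holds because $\kappa-Z_\varphi(\kappa)$ lies in the positive-definite plane $P(\varphi)$. Moreover, each $e_i$ is an $I$-effective class: it is the total transform of an exceptional curve, which is a sum of irreducible curves with nonnegative coefficients. So pairing with the K\"ahler class gives $\lambda_i=\alpha^{1,1}_I\cdot e_i>0$, and in particular $\lambda_i\neq0$. Here the sign is in the convention attached to $I$. The symmetric set in the statement accounts for the orientation-preserving diffeomorphisms acting by $e_i\mapsto -e_i$ and for relabelings, so the inclusion only needs $\lambda_i\neq0$, and this holds.

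For the inclusion ``$\supseteq$'', take $\kappa^2>\sum\lambda_i^2$ with all $\lambda_i\neq0$. First reduce to $\lambda_i>0$. For each $i$ there is an orientation-preserving diffeomorphism of $X_k$ supported in the $i$-th $\overline{\CC\PP}^2$ summand, namely complex conjugation on $\overline{\CC\PP}^2$, glued in by isotoping it to fix a ball. It acts by $e_i\mapsto -e_i$ and trivially on the rest of $H^2$. Since $\mathcal K^t_{\operatorname{Int}}$ is invariant under $\Df(X_k)$ via pullback of complex structures, we may assume all $\lambda_i>0$.

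Now choose $\lambda>0$ with $\sum\lambda_i^2\le\lambda^2<\kappa^2$. By Lemma~\ref{lem:Grassmannian for blowup}, $\Phi_{\kappa,\lambda}\neq\varnothing$; indeed it contains $M_\kappa$, which is nonempty. By Lemma~\ref{lem: mathring Grassmannian}, we can pick $\varphi\in\mathring\Phi_{\kappa,\lambda}$. Lemma~\ref{lem:kahler and tame period subdomain} gives $J\in\operatorname{Per}^{-1}(\varphi)$ with $\kappa\in\pm\mathcal K^t_J(X)$. If the sign is negative, replace $J$ by $-J$. This reverses orientation, so for $\mathbb T^4$ we compose instead with an orientation-reversing diffeomorphism, or more simply pick $\varphi$ in the other connected component of $\Phi_{\kappa,\lambda}$, using Lemma~\ref{lem:plus or minus} and the fact that $-\kappa$ is realized on the complementary component. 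This sign bookkeeping is the main delicate step. The cleanest route is to note that complex conjugation $\varphi\mapsto\bar\varphi$ swaps the two components of $\Phi_\kappa$ while the complex structure $J\mapsto -J$ flips the sign of the tamed cone. The composite leaves the orientation and the class unchanged, so both signs occur among the two components, and we take the component where $+\kappa$ is attained.

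Finally, blow up $k$ distinct points of $(X,J)$ and identify $H^2$ with the exceptional curves representing $e_i$. This identification is possible because any two such identifications differ by a diffeomorphism acting trivially on $L$ and fixing the $e_i$. Lemma~\ref{lem: union of generic tamed cones} then gives $\kappa-\sum\lambda_ie_i\in\mathcal K^t_I(X_k)$, which completes the inclusion. The description as the complement of $\bigcup e_i^\perp$ in the positive cone is then just a restatement, since $\alpha^2=\kappa^2-\sum\lambda_i^2$ and $\alpha\cdot e_i=\lambda_i$.
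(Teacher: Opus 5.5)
Your inclusion $\supseteq$ is the paper's argument: choose $\varphi\in\mathring\Phi_{\kappa,\lambda}$, blow up $k$ distinct points, apply Lemma~\ref{lem: union of generic tamed cones}, and fix signs with the conjugation diffeomorphisms. It is fine apart from one slip in the sign discussion. On a complex surface $-J$ induces the \emph{same} orientation as $J$, $\operatorname{Per}(-J)=\bar\varphi$ again lies in $\mathring\Phi_{\kappa,\lambda}$, and $\mathcal K^t_{-J}(X)=-\mathcal K^t_J(X)$. So replacing $J$ by $-J$ settles the sign directly, and no orientation-reversing diffeomorphism is involved.

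The genuine gap is in $\subseteq$. Your argument rests on the claim that, ``after the usual identification'', the fixed classes $e_i$ are, up to sign and order, the total transforms $E^I_1,\dots,E^I_k$ of the exceptional curves of an arbitrary complex structure $I$ on $X_k$. Section~\ref{section:cpx str on blowup} only says that $(X_k,I)$ is biholomorphic to an iterated blowup of some $(X,J)$. The resulting identification of $H^2$ need not respect the fixed splitting $L_\RR\oplus\bigoplus_i\ZZ e_i$, and a priori the $E^I_j$ could be other $(-1)$-classes, e.g.\ $\ell+e_1$ with $\ell\in L$ isotropic. What your computation actually proves is $\alpha\cdot E^I_j>0$, which says nothing about $\alpha\cdot e_i$. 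The symmetry of the target set under $e_i\mapsto -e_i$ and relabelings does not rescue this, because the identifying diffeomorphism need not act on $H^2(X_k)$ through that group.

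That the exceptional classes of every complex structure on $X_k$ lie in $\{\pm e_i\}$ is a gauge-theoretic fact, not a formal one; on rational surfaces, for instance, exceptional curves can represent many other classes. One way to see it: the Seiberg--Witten basic classes of $X_k$ are exactly $\sum_i\pm e_i$, while those of $(X_k,I)$ are $\sum_j\pm E^I_j$ because the minimal model has trivial canonical class. Hence $\{\pm E^I_j\}=\{\pm e_i\}$. This is precisely the input the paper uses, and once you have it, your detour through $\alpha^{1,1}_I$ is unnecessary. A tamed class is symplectic, so $a^2>0$ is automatic. The Seiberg--Witten blowup formula together with Taubes' SW$=$Gr makes $\pm e_i$ the class of an $\omega$-symplectic exceptional sphere, so $a\cdot e_i\neq0$. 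Note that Lemma~\ref{lem: I-Kahler}, which your computation mirrors, also presupposes this identification, so citing it does not close the gap.
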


\begin{proof}
    Let \( a=\kappa-\sum_{i=1}^k\lambda_i e_i \). If \( \kappa^2>\sum_{i=1}^k\lambda_i^2\) and \(\lambda_i\neq 0 \) for every $i$. Set \( \lambda=(\sum_{i=1}^k\lambda_i^2)^{1/2}. \) Then $\kappa^2>\lambda^2$. Choose \( \varphi\in\mathring{\Phi}_{\kappa,\lambda} \) and a complex structure $J$ on $X$ with $\operatorname{Per}(J)=\varphi$ such that \( \kappa\in\mathcal K_J^t(X). \) Blowing up $k$ distinct points of $(X,J)$ and applying Lemma \ref{lem: union of generic tamed cones}, we obtain a complex structure $I$ on $X_k$ such that \( \kappa-\sum_{i=1}^k|\lambda_i|e_i \in\mathcal K_I^t(X_k). \) For each $i$, complex conjugation on the $i$-th $\overline{\mathbb{CP}}^{2}$-summand extends to an orientation-preserving self-diffeomorphism of $X_k$ whose induced action sends $e_i$ to $-e_i$ and fixes $e_i^\perp$. By composing such diffeomorphisms, we obtain an orientation-preserving self-diffeomorphism $f$ satisfying \( f^*e_i=\operatorname{sgn}(\lambda_i)e_i \) for every $i$, while preserving the $H^2(X;\RR)$-summand. Consequently, \( f^*(\kappa-\sum_{i=1}^k\lambda_i e_i) = \kappa-\sum_{i=1}^k|\lambda_i|e_i. \) Pulling back the holomorphically tamed pair by $f^{-1}$ shows that \( a\in\mathcal K_{\operatorname{Int}}^t(X_k). \) Conversely, if $a$ is represented by a symplectic form, then $a^2>0$. Moreover, the Seiberg--Witten blowup formula and SW=Gr theorem by Taubes imply that \(a\cdot e_i\neq 0 \) for every $i$. Thus, the integrable tamed cone is precisely the complement of $\bigcup_{i=1}^k e_i^\perp$ in the positive cone.
\end{proof}

We are now ready to state and prove our main results for the non-minimal manifold $X_k$ in different aspects.

\begin{theorem}\label{thm:blowup uniqueness}
Let $\omega_1,\omega_2$ be two cohomologous holomorphically tamed symplectic forms on $X_k$. Then there exists $f\in\Dfh(X_k)$ such that $f^*\omega_2=\omega_1$.
\end{theorem}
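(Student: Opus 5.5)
We prove the statement by joining the two tamed pairs through a path of complex structures along which the common class $a$ stays tamed, and then applying Lemma~\ref{lem:isotopic} together with Moser's stability. This mirrors item (3) of Theorem~\ref{thm:minimal main}.

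Write $[\omega_1]=[\omega_2]=a=\kappa-\sum_{i=1}^k\lambda_ie_i$, and let $I_1,I_2$ be complex structures on $X_k$ tamed by $\omega_1,\omega_2$. By Corollary~\ref{cor:tame cone=symp cone} we have $\lambda_i\neq0$ and $\kappa^2>\sum\lambda_i^2$. Choose $\lambda>0$ with $\lambda^2<\kappa^2$ and $\lambda^2\le\sum\lambda_i^2$, but close enough to $(\sum\lambda_i^2)^{1/2}$; the precise requirement is fixed in the third paragraph. Let $(X,J_j)$ be the minimal model of $(X_k,I_j)$. Lemma~\ref{lem: I-Kahler} gives $\varphi_j:=\operatorname{Per}(J_j)\in\Phi_{\kappa,\lambda}$.

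Next we normalize the signs and the exceptional classes. The exceptional curves of $I_j$ represent classes $\pm e_{\sigma(i)}$ for some permutation $\sigma$, up to the ordering of infinitely near points. Since $a$ tames $I_j$, we have $a\cdot E>0$ for every exceptional curve $E$. Hence, after composing with the diffeomorphisms flipping the $e_i$ (as in the proof of Corollary~\ref{cor:tame cone=symp cone}) and with permutations, these normalizations must be compatible with $a$ itself. So $f$ remains in $\Dfh(X_k)$: the sign of each exceptional class is forced by $\lambda_i$, and no flip is actually needed. We also apply Lemma~\ref{lem:plus or minus} on the minimal model to place $\varphi_1,\varphi_2$ in the same component of $\Phi_\kappa$. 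To do this, deform along the blowup family until the centers are generic and $\varphi_j\in\mathring\Phi_{\kappa,\lambda}$; Lemma~\ref{lem: union of generic tamed cones} shows that the $(1,1)$-projection of $\kappa$ is tamed on the minimal model.

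Now we build the path. By Lemma~\ref{lem: mathring Grassmannian}, connect $\varphi_1$ to $\varphi_2$ by a path in $\Phi_{\kappa,\lambda}$ that passes through a point of $\mathring\Phi_{\kappa,\lambda}$. As in Lemma~\ref{lem:cpxconn}, lift it to a path of complex structures $J_s$ on $X$ with $J_0=J_1$ (the minimal model of $I_1$) and endpoint $f_0^*J_2$, where $f_0\in\Dfh(X)$ comes from Torelli at a generic period. Using the family $\mathcal X_k\to B_k$ from Section~\ref{section:cpx str on blowup} and Lemma~\ref{lem: moving blowup point}, extend this to a path $I_s$ on $X_k$ from $I_1$ to $f^*I_2$, with $f\in\Dfh(X_k)$ extending $f_0$ and matching the blowup points. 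Away from a small neighborhood of the endpoints, the path can be kept in the locus $B_k^*$ of distinct centers over periods in $\mathring\Phi_{\kappa,\lambda}$.

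The key claim, and the main obstacle, is that $a\in\mathcal K^t_{I_s}(X_k)$ for every $s$. At generic points this follows from Lemma~\ref{lem: union of generic tamed cones}, but it requires $\lambda^2\ge\sum\lambda_i^2$, which pulls against Lemma~\ref{lem: I-Kahler}. We therefore take exactly $\lambda^2=\sum\lambda_i^2$: Lemma~\ref{lem: I-Kahler} still applies, and the strict inequality $Z_\varphi(\kappa)^2>\lambda^2$ is what the proof of Lemma~\ref{lem: union of generic tamed cones} actually uses. Nongeneric points of the path (non-distinct centers, or periods on integral walls) occur only near the endpoints. There we must argue through the openness of the tamed cone in the family, in the spirit of the continuity argument of Lemma~\ref{lem:plus or minus}. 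Concretely, near $I_1$ the class $a$ stays tamed for small deformations, so it suffices to move from $I_1$ into the generic locus within that small neighborhood, which is possible by density (Lemma~\ref{lem: moving blowup point} and Lemma~\ref{lem: mathring Grassmannian}). I expect this endpoint and genericity bookkeeping to be the hardest part. Once tameness along the whole path is established, Lemma~\ref{lem:isotopic} yields $\phi\in\Dff(X_k)$ with $\phi^*(f^*\omega_2)=\omega_1$, and $f\circ\phi\in\Dfh(X_k)$ is the required diffeomorphism.
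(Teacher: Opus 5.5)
Your proposal is correct and is essentially the paper's proof. You set $\lambda^2=\sum_i\lambda_i^2$ so that Lemma~\ref{lem: I-Kahler} and Lemma~\ref{lem: union of generic tamed cones} both apply, move each $I_j$ (staying in the open set where $\omega_j$ still tames it) to distinct centers over a period in $\mathring\Phi_{\kappa,\lambda}$, and join these by a generic path using Lemmas~\ref{lem:plus or minus}, \ref{lem: mathring Grassmannian}, \ref{lem:cpxconn} and \ref{lem: moving blowup point}. You then conclude with Lemma~\ref{lem:isotopic} and Moser; your short tamed segments near the endpoints are equivalent to the paper's replacing $I_j$ by the perturbed $I_j'$ from the outset.

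One citation should be fixed. You need $\kappa\in\mathcal K^t_{J_j}(X)$ on the minimal model, both for Lemma~\ref{lem:plus or minus} and as a hypothesis of Lemma~\ref{lem: union of generic tamed cones}. This does not come from Lemma~\ref{lem: union of generic tamed cones}, which goes the other way. It comes from Theorem~\ref{thm:LZb1}, which makes $Z_{\varphi_j}(\kappa)-\sum_i\lambda_ie_i$ an $I_j$-K\"ahler class; its push-down $Z_{\varphi_j}(\kappa)$ to the minimal model is then $J_j$-K\"ahler. The paper also leaves this step implicit.
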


    \begin{proof}    
 Choose complex structures $I_1$ and $I_2$ on $X_k$ tamed by $\omega_1$ and $\omega_2$, respectively. We first show that they can be perturbed to complex structures $I_1'$ and $I_2'$, still tamed by $\omega_1$ and $\omega_2$, whose minimal models contain no curves.
Suppose that $(X_k,I_i)$ has minimal model $(X,J_i)$ with period
\(
\varphi_i=\operatorname{Per}(J_i)\in\Phi\) for \( i=1,2.
\)
Write
\(
[\omega_1]=[\omega_2]=\kappa-\sum_{j=1}^k\lambda_je_j\in H^2(X_k;\RR),
\)
and let
\(
\lambda=(\sum_{j=1}^k\lambda_j^2)^{\frac12}.
\)
By Lemma \ref{lem: I-Kahler}, we have $\varphi_i\in\Phi_{\kappa,\lambda}$.
By the local Torelli theorem, there exists a deformation
\(
\mathcal X^i\to B^i
\)
of $(X,J_i)$ such that $B^i$ is identified with a neighborhood of $\varphi_i$ in $\Phi$. Consider the corresponding $k$-fold blowup family constructed in Section \ref{section:cpx str on blowup},
\(
\pi_k^i:\mathcal X_k^i\rightarrow B_k^i=\mathcal X_{k-1}^i,
\)
and let $b_k^i\in B_k^i$ denote the point corresponding to $(X_k,I_i)$. This means the fiber
\(
(\pi_k^i)^{-1}(b_k^i)
\)
is biholomorphic to $(X_k,I_i)$.
Since $\mathring{\Phi}_{\kappa,\lambda}$ is dense in $\Phi_{\kappa,\lambda}$ by Lemma \ref{lem: mathring Grassmannian}, we may choose a point
\(
(b_k^i)'\in B_k^i
\)
arbitrarily close to $b_k^i$ such that
\(
\Pi_{k-1}\bigl((b_k^i)'\bigr)\in\mathring{\Phi}_{\kappa,\lambda}.
\)
Let $I_i'$ denote the complex structure on $X_k$ corresponding to $(b_k^i)'$. Since the taming condition is open, $I_i'$ is still tamed by $\omega_i$, provided $(b_k^i)'$ is chosen sufficiently close to $b_k^i$.
Replacing $I_i$ by $I_i'$, we may therefore assume from the outset that the minimal model of $(X_k,I_i)$ has period $\varphi_i$ in $\mathring{\Phi}_{\kappa,\lambda}$.
  
Now apply the connectedness statement in Lemma \ref{lem: mathring Grassmannian}, together with Lemma \ref{lem:plus or minus}, to choose a path
\(
\{\varphi_t\}\subseteq\mathring{\Phi}_{\kappa,\lambda}
\)
connecting $\varphi_1$ and $\varphi_2$. By Lemma \ref{lem:cpxconn}, after composing with a homologically trivial diffeomorphism if necessary, we may assume that $(X,J_1)$ and $(X,J_2)$ lie in the same deformation family
\(
\pi:\mathcal X\rightarrow B,
\)
where the base $B$ contains a path $\{b(t)\}$ whose period map is precisely $\{\varphi_t\}$.

Let $\beta_1,\beta_2\in B_k$ denote the points corresponding to $(X_k,I_1)$ and $(X_k,I_2)$, respectively, in the associated $k$-fold blowup family. By the density statement of Lemma \ref{lem: moving blowup point}, we may deform each $\beta_i$ to a nearby point
\(
\beta_i'\in \Pi_{k-1}^{-1}(\{b(t)\})\cap B_k^*,
\)
such that the corresponding complex structure on $X_k$ remains tamed by $\omega_i$.
The connectedness statement of Lemma \ref{lem: moving blowup point} then provides a path
\(
\{\gamma(t)\}\subseteq
\Pi_{k-1}^{-1}(\{b(t)\})\cap B_k^*
\)
connecting $\beta_1'$ to $\beta_2'$. Let $(X_k,I(t))$ denote the corresponding family of complex structures. Then Lemma \ref{lem: union of generic tamed cones} implies that
\(
[\omega_1]=[\omega_2]\in\mathcal K_{I(t)}^t(X_k)
\)
for every $t$. Finally, we again use Lemma \ref{lem:isotopic} together with Moser's stability to obtain that $\omega_1$ and $\omega_2$ are related by some $f\in\Dfh(X_k)$.
    \end{proof}

As an immediate consequence of the preceding theorem, Questions~\ref{question:sympST} and~\ref{question:unique} have affirmative answers for $X_k$.

\begin{corollary}\label{cor:uniqueness on blowup}
For any $a\in H^2(X_k;\RR)$, the following hold:
\begin{enumerate}
\item $\mathcal{SK}_a(X_k)$ is either empty or equal to $\mathcal{ST}_a(X_k)$. Equivalently, every holomorphically tamed symplectic form on $X_k$ representing a class in $\mathcal{K}_{\text{Int}}^c(X_k)$ is of K\"ahler type.

\item $\#\mathcal{MK}_a(X_k)\leq 1$. Equivalently, any two cohomologous symplectic forms of K\"ahler type on $X_k$ are related by a homologically trivial diffeomorphism.

\end{enumerate}
\end{corollary}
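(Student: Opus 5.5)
Both items follow from Theorem~\ref{thm:blowup uniqueness}, combined with Moser-type transport of complex structures along diffeomorphisms.

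For item (1), assume $\mathcal{SK}_a(X_k)\neq\varnothing$ and fix some $\omega_0\in\mathcal{SK}_a(X_k)$. Since every K\"ahler form tames its complex structure, we always have $\mathcal{SK}_a(X_k)\subseteq\mathcal{ST}_a(X_k)$, so only the reverse inclusion needs an argument. Take any $\omega\in\mathcal{ST}_a(X_k)$. Both $\omega$ and $\omega_0$ are holomorphically tamed and cohomologous, so Theorem~\ref{thm:blowup uniqueness} gives $f\in\Dfh(X_k)$ with $f^*\omega_0=\omega$. If $\omega_0$ is compatible with the complex structure $J_0$, then $\omega$ is compatible with the integrable complex structure $f^*J_0$. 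Hence $\omega\in\mathcal{SK}_{f^*J_0}(X_k)\subseteq\mathcal{SK}_a(X_k)$. The stated equivalence is just a reformulation: $a\in\mathcal{K}^c_{\operatorname{Int}}(X_k)$ means exactly that $\mathcal{SK}_a(X_k)$ is nonempty, because $\mathcal K^c_{\operatorname{Int}}=c(\mathcal{SK})$.

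For item (2), let $\omega_1,\omega_2\in\mathcal{SK}_a(X_k)$. Being K\"ahler type, they are in particular holomorphically tamed and cohomologous. Theorem~\ref{thm:blowup uniqueness} therefore gives $f\in\Dfh(X_k)$ with $f^*\omega_2=\omega_1$. So the $\Dfh(X_k)$-action on $\mathcal{SK}_a(X_k)$ has at most one orbit, which means $\#\mathcal{MK}_a(X_k)\le 1$.

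There is no substantive obstacle here, since all the work lies in Theorem~\ref{thm:blowup uniqueness}. The only points to check are that pullback by a diffeomorphism preserves integrability and compatibility, and that $\mathcal{SK}_a(X_k)$ is stable under $\Dfh(X_k)$. The latter holds because homologically trivial diffeomorphisms fix the class $a$.
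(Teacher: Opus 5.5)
Your proposal is correct and follows the paper's own argument. Both items are immediate from Theorem~\ref{thm:blowup uniqueness}: in item (1) you additionally spell out that $\omega=f^*\omega_0$ is K\"ahler with respect to the pulled-back integrable structure $f^*J_0$, which is exactly what the paper's one-line proof leaves implicit.
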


\begin{proof}
Every symplectic form of K\"ahler type is holomorphically tamed. In particular, Theorem \ref{thm:blowup uniqueness} applies to any symplectic form of K\"ahler type, giving the desired conclusion.
\end{proof}

\begin{rmk}
Unlike in the minimal case, the preceding corollary does not imply that
\(
\mathcal{ST}(X_k)=\mathcal{SK}(X_k),
\)
since the class $a$ is assumed to lie in $\mathcal{K}_{\operatorname{Int}}^c(X_k)$. Indeed, Corollary \ref{cor:tame cone=symp cone} shows that the integrable tamed cone of $X_k$ coincides with its symplectic cone. On the other hand, \cite[Corollary~1.3]{LMS13} shows that, when $X=\mathbb T^4$, the one-point blowup $X_1$ admits symplectic forms that are not of K\"ahler type. Thus,
\(
\mathcal{ST}(X_1)\neq\mathcal{SK}(X_1)
\) for the blowup of $\mathbb T^4$.
\end{rmk}

%e will use the following version of Kodaira--Spencer stability theorem, which can be found in \cite[Theorem 5.6]{EV16}.
%\begin{theorem}[Kodaira--Spencer stability  \cite{KSstab,EV16}]\label{thm:KSstab}
%    Let $(M,J,\omega)$ be a closed K\"ahler manifold, and let $\{J_t\}$, $t\in B$, $J_0 = J$, be a smooth
%local deformation of $J$. Then there exists a neighborhood of $U\subseteq B$ of zero in $B$ such that $[\omega]_{J_t}^{1,1}$, the $(1,1)$-part of $[\omega]$ with respect to $J_t$, is in $\mathcal{K}_{J_t}^c(M)$ for all $t \in U$. 
%\end{theorem}
We next answer Question \ref{question:open} for $X_k$.

\begin{theorem}\label{thm:blowup openness}
 The space $\mathcal{SK}(X_k)$ of symplectic forms of K\"ahler type is open in the space $\mathcal S(X_k)$ of all symplectic forms.
\end{theorem}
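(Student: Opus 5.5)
Fix $\omega\in\mathcal{SK}_I(X_k)$ for some complex structure $I$. The plan is to find a neighborhood $\mathcal U$ of $\omega$ in $\mathcal S(X_k)$ and to show that every $\omega'\in\mathcal U$ is holomorphically tamed with $[\omega']\in\mathcal K^c_{\operatorname{Int}}(X_k)$. Corollary~\ref{cor:uniqueness on blowup}(1) then says that $\omega'$ is of K\"ahler type. Tameness is easy: taming $I$ is an open condition, so after shrinking $\mathcal U$ every $\omega'\in\mathcal U$ tames $I$, i.e.\ $\mathcal U\subseteq\mathcal{ST}_I(X_k)$. The real work is cohomological. We need $\mathcal K^c_{\operatorname{Int}}(X_k)$ to contain a neighborhood of $a=[\omega]$ in $H^2(X_k;\RR)$. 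Lemma~\ref{lem: continuity of the class map} then lets us shrink $\mathcal U$ further so that $[\omega']$ lies in that neighborhood.

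To prove this cohomological openness, write $a=\kappa-\sum_i\lambda_ie_i$. Since $a$ is $I$-K\"ahler we have $\lambda_i\neq 0$ and $\kappa^2>\lambda^2:=\sum\lambda_i^2$. For nearby classes $a'=\kappa'-\sum\lambda_i'e_i$ these strict inequalities persist, with signs $\operatorname{sgn}\lambda_i'=\operatorname{sgn}\lambda_i$. The natural tool is the argument of Corollary~\ref{cor:tame cone=symp cone}, upgraded from tamed to compatible classes. Using the sign-flip diffeomorphisms, reduce to all $\lambda_i'>0$. Choose a period $\varphi\in\mathring\Phi\cap M_{\kappa'}$ with $\varphi$ very close to $\overline M_{\kappa'}$ rather than merely in $\Phi_{\kappa',\lambda'}$; in fact take $\varphi\in\overline M_{\kappa'}$ generic. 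This needs a small care point. Picking $\varphi$ exactly orthogonal to the real class $\kappa'$ while avoiding all integral walls is possible: $\overline M_{\kappa'}$ is a positive-dimensional complex submanifold, and by the Baire argument of Lemma~\ref{lem: mathring Grassmannian} its generic points avoid the countably many walls $\overline M_\delta$ with $\delta\notin\RR\kappa'$. If $\kappa'$ is proportional to an integral class we instead only avoid walls with $\delta\notin\QQ\kappa'$; then the only curves on $(X,J)$ have classes proportional to $\kappa'$ with positive square, which is impossible for K3/$\mathbb T^4$ with no $(-2)$ or effective classes except possibly positive ones, and those pair positively anyway. Lemma~\ref{lem:kahler and tame period subdomain} gives $J$ with $\operatorname{Per}(J)=\varphi$ and $\kappa'\in\mathcal K^c_J(X)$, where the sign is fixed by Lemma~\ref{lem:plus or minus} and the choice of component. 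Now blow up $k$ distinct points to get $I'$. The Nakai--Moishezon computation of Lemma~\ref{lem: union of generic tamed cones}, now with $Z_\varphi(\kappa')=\kappa'$, shows $a'\in\mathcal K^c_{I'}(X_k)$.

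Alternatively, one can simply observe the full statement: $\mathcal K^c_{\operatorname{Int}}(X_k)$ equals the set in Corollary~\ref{cor:tame cone=symp cone}, which is open. With that, the proof concludes. For $\omega'\in\mathcal U$ we have $\omega'\in\mathcal{ST}_{[\omega']}(X_k)$ and $\mathcal{SK}_{[\omega']}(X_k)\neq\varnothing$, so by Corollary~\ref{cor:uniqueness on blowup}(1), which rests on Theorem~\ref{thm:blowup uniqueness}, $\omega'\in\mathcal{SK}(X_k)$.

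The expected main obstacle is the genericity step: choosing $\varphi$ on $\overline M_{\kappa'}$ so that $(X,J)$ has no curves obstructing Nakai--Moishezon when $\kappa'$ is real rather than integral. If exact orthogonality causes trouble, the fallback is to use the already-known class $a$ itself. $I$ is K\"ahler for $a$, so Kodaira--Spencer stability (as in Lemma~\ref{lemma:uniontamed}) gives a neighborhood of $a$ inside $\mathcal K^c_{I}(X_k)+H^-_I(X_k)$. Combined with the tame-to-compatible isotopy via Lemma~\ref{lem:isotopic}, any nearby class $a'$ is handled by deforming $I$ in the blowup family so that $a'$ becomes $(1,1)$; this is possible since the period map for the minimal model is a local submersion onto $\Phi$ and $\overline M_{\kappa'}$ varies continuously with $\kappa'$.
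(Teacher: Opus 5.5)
Your outer frame is the paper's. Tameness of the fixed $I$ is open, the class map is continuous (Lemma~\ref{lem: continuity of the class map}), and Corollary~\ref{cor:uniqueness on blowup}(1) upgrades tamed forms whose class lies in $\mathcal K^c_{\operatorname{Int}}(X_k)$ to K\"ahler type. Everything therefore rests on openness of $\mathcal K^c_{\operatorname{Int}}(X_k)$ at $a$, and there your main argument has a gap.

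\textbf{The main argument fails for rational $\kappa'$.} When $\kappa'$ is not proportional to an integral class, your generic choice on $\overline M_{\kappa'}$ works. But rational directions are dense, so every neighbourhood of $a$ contains classes with $\kappa'$ a positive multiple of a primitive integral class $L$. Then any $J$ with $\kappa'\in\mathcal K^c_J(X)$ makes $L$ ample. Once $n\gg0$, $|nL|$ contains curves through the blowup centres with any prescribed multiplicities $m_i$. Their proper transforms have classes $nL-\sum_i m_ie_i$, and $a'$ evaluates on them to $n\,\kappa'\cdot L-\sum_i m_i\lambda_i'$. The inequality $\kappa'^2>\sum_i\lambda_i'^2$ does not force this to be positive. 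For one centre it is a Seshadri-constant condition, and Seshadri constants can lie strictly below $\sqrt{L^2}$: for a principal polarization on an abelian surface of Picard number one the constant is $4/3<\sqrt2$. So ``those pair positively anyway'' is false. The Nakai--Moishezon step of Lemma~\ref{lem: union of generic tamed cones} is then unavailable, since it needed $\operatorname{Per}(J)\in\mathring\Phi$ so that the only curves are exceptional.

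\textbf{Your ``alternative'' is false.} Your argument uses nothing about $a'$ beyond the signs of the $\lambda_i'$, so it would prove $\mathcal K^c_{\operatorname{Int}}(X_k)=\mathcal K^t_{\operatorname{Int}}(X_k)$. Combined with Corollary~\ref{cor:uniqueness on blowup}(1), that would give $\mathcal{ST}(X_k)=\mathcal{SK}(X_k)$. The remark after that corollary rules this out for the one-point blowup of $\mathbb T^4$ via \cite{LMS13}.

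\textbf{The fallback omits the decisive step.} It points the right way, but neither ingredient you cite produces a complex structure for which $a'$ itself is K\"ahler. The set $\mathcal K^c_I(X_k)+H^-_I(X_k)$ equals $\mathcal K^t_I(X_k)$ by Theorem~\ref{thm:LZb1}. It is open for trivial reasons and gives only $I$-tameness. Lemma~\ref{lem:isotopic} only connects forms whose class stays tamed along a path.

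The paper does two things. First, it builds $I_t$ near $I$ with $t\in H^{1,1}_{I_t}(X_k;\RR)$. It projects $P(\varphi)$ onto $(\kappa')^\perp$, realizes the resulting periods by a local Torelli family, and carries the blowup centres along a local section of $\Pi_{k-1}$.

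Second, it propagates positivity from $(I,\omega)$:
\begin{enumerate}
\item Kodaira--Spencer gives smoothly varying K\"ahler metrics $g_t$ for the $I_t$.
\item The $g_t$-harmonic representatives $\Omega_t$ of $t$ vary smoothly and are of type $(1,1)$, because the Laplacian preserves type.
\item Writing $\omega=\Omega_a+\sqrt{-1}\partial\bar\partial F$, the forms $\omega_t=\Omega_t+\sqrt{-1}\partial_{I_t}\bar\partial_{I_t}F_t$ are real $I_t$-$(1,1)$ forms in class $t$, close to $\omega$. Hence they are $I_t$-K\"ahler for $t$ near $a$.
\end{enumerate}
This needs no genericity and handles rational $\kappa'$, because it uses that $a$ already lies strictly inside $\mathcal K^c_I(X_k)$. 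Without this step, or an equivalent family version of Kodaira--Spencer stability for prescribed classes, your proposal does not prove openness.
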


\begin{proof}
We first show that the integrable compatible cone
\(
\mathcal K_{\operatorname{Int}}^c(X_k)
\)
is open in \(H^2(X_k;\RR)\).
Let
\(
a=\kappa-\sum_{i=1}^k\lambda_i e_i
\in\mathcal K_I^c(X_k),
\)
and choose an \(I\)-K\"ahler form \(\omega\) representing \(a\).
We aim to construct an open neighborhood of \(a\in H^2(X_k;\RR)\) that is also contained in
\(\mathcal K_{\operatorname{Int}}^c(X_k)\).

 Suppose that the minimal model of \((X_k,I)\) is \((X,J)\), with
\(
\varphi=\operatorname{Per}(J)\in\Phi.
\) Since \(\kappa\in H_J^{1,1}(X;\RR)\), the oriented positive-definite $2$-plane
\(P(\varphi)\) is contained in \(\kappa^\perp\). For every \(\kappa'\)
sufficiently close to \(\kappa\), the orthogonal projection of
\(P(\varphi)\) onto \((\kappa')^\perp\) is again an oriented positive-definite
\(2\)-plane. Denote the corresponding period point by
\(\varphi_{\kappa'}\). After shrinking to a neighborhood of \(\kappa\),
the assignment
\(
\kappa'\mapsto\varphi_{\kappa'}
\)
is smooth and satisfies
\(
\kappa'\in H_{J_{\kappa'}}^{1,1}(X;\RR)
\) for any $J_{\kappa'}\in\text{Per}^{-1}(\varphi_{\kappa'})$.

By the local Torelli theorem, we may choose a deformation
\(
\pi:\mathcal X\rightarrow B
\)
of \((X,J)\) such that the period map identifies \(B\) with a
neighborhood of \(\varphi\) in \(\Phi\). Consider the associated
\(k\)-fold blowup deformation family constructed in
Section~\ref{section:cpx str on blowup},
\(
\pi_k:\mathcal X_k\rightarrow B_k.
\)
Let \(\beta\in B_k\) be the point corresponding to \((X_k,I)\). Since
the natural projection
\(
\Pi_{k-1}:B_k\rightarrow B
\)
is a submersion, after shrinking \(B\) we may choose a smooth local
section
\(
s:B\rightarrow B_k
\)
through \(\beta\).

Now choose a sufficiently small neighborhood \(U\) of \(a\) in
\(H^2(X_k;\RR)\). For
\(
t=\kappa'-\sum_{i=1}^k\lambda_i'e_i\in U,
\)
let \(I_t\) be the complex structure on \(X_k\) corresponding to
\(
s(\varphi_{\kappa'})\in B_k.
\)
Under the natural smooth trivialization of the family, the
exceptional classes \(e_1,\ldots,e_k\) remain of type \((1,1)\).
Together with the choice of \(\varphi_{\kappa'}\), this gives
\(
t\in H_{I_t}^{1,1}(X_k;\RR)
\)
for every \(t\in U\).

We next adapt an argument from \cite[Theorem~5.6]{EV16}. By the
Kodaira--Spencer stability theorem \cite[Theorem~15]{KSstab}, after
shrinking \(U\) if necessary, the complex structures \(I_t\) admit a
smooth family of K\"ahler metrics \(g_t\). By the parametric version of
Hodge theory, the \(g_t\)-harmonic representative \(\Omega_t\) of the
class \(t\) depends smoothly on \(t\). Since
\(
t\in H_{I_t}^{1,1}(X_k;\RR)
\)
and the Hodge Laplacian of a K\"ahler metric preserves type,
\(\Omega_t\) is a real \((1,1)\)-form with respect to \(I_t\).

Let \(\Omega_a\) be the harmonic representative of \(a\). Since
\(\Omega_a\) and \(\omega\) are cohomologous real \((1,1)\)-forms, the
\(\partial\overline{\partial}\)-lemma gives a smooth
function \(F\) such that
\(
\omega
=
\Omega_a
+
\sqrt{-1}\,\partial_I\overline{\partial}_I F.
\)
We can then extend \(F\) to a
smooth family of functions \(F_t\) for $t\in U$ and define
\(
\omega_t
:=
\Omega_t
+
\sqrt{-1}\,\partial_{I_t}\overline{\partial}_{I_t}F_t.
\)
Then
\(
\omega_t\) is still an $(1,1)$-form with respect to $I_t$ in the class $t$. Note that \(\omega_a=\omega\) is tamed with $I_a=I$. Since tameness is an open condition, after
shrinking \(U\) once more, \(\omega_t\) is \(I_t\)-K\"ahler for every
\(t\in U\). It follows that
\(
U\subseteq\mathcal K_{\operatorname{Int}}^c(X_k),
\)
and hence \(\mathcal K_{\operatorname{Int}}^c(X_k)\) is open in
\(H^2(X_k;\RR)\).

We now pass from cohomological openness to openness at the level of symplectic forms. Let
\(
\omega\in\mathcal{SK}(X_k).
\)
Choose a complex structure \(I\) on \(X_k\) with respect to which
\(\omega\) is K\"ahler. By the openness proved above, there exists an
open neighborhood
\(
U\subseteq H^2(X_k;\RR)
\)
of \([\omega]\) such that
\(
U\subseteq\mathcal K_{\operatorname{Int}}^c(X_k).
\)
By the continuity of the cohomology class map from
\(\mathcal S(X_k)\) to \(H^2(X_k;\RR)\) shown in
Lemma~\ref{lem: continuity of the class map}, there exists an open
neighborhood
\(
\mathcal U\subseteq\mathcal S(X_k)
\)
of \(\omega\) such that
\(
[\omega']\in U
\)
for every \(\omega'\in\mathcal U\).
After shrinking \(\mathcal U\), we may also assume that every
\(\omega'\in\mathcal U\) tames the fixed complex structure \(I\), since
tameness is an open condition. For each \(\omega'\in\mathcal U\), the
condition
\(
[\omega']\in\mathcal K_{\operatorname{Int}}^c(X_k)
\)
implies that the holomorphically tamed \(\omega'\) is of
K\"ahler type by Corollary \ref{cor:uniqueness on blowup}. Thus,
\(
\mathcal U\subseteq\mathcal{SK}(X_k),
\)
which proves that \(\mathcal{SK}(X_k)\) is open in
\(\mathcal S(X_k)\).
\end{proof}

\section{Proofs of theorems in Section \ref{section:main results}}\label{section:proof}
\begin{proof}[Proof of Theorem \ref{thm:main1}]
    Case (1) was proved in Theorem \ref{thm:b+=1 main} (1). Case (2) was proved in Theorem \ref{thm:minimal main} (1) and Corollary \ref{cor:uniqueness on blowup} (1). For case (3), note that if $\mathcal{SK}_a(X)\neq \varnothing$ and $\omega\in \mathcal{ST}_J(X)\cap\mathcal{ST}_a(X)$, then $a\in H_J^{1,1}(X;\RR)$ by Corollary \ref{cor:rigidity}. It then follows from Theorem \ref{thm:LZb1} that $a\in\mathcal{K}^c_{J}(X)$. Hence, there exists some $\omega'\in\mathcal{SK}_a(X)$. Moser's stability thus implies $\omega$ is diffeomorphic to $\omega'$ so that we also have $\omega\in \mathcal{SK}_a(X)$.
\end{proof}

\begin{proof}[Proof of Theorem \ref{thm:main2}]
    Case (1) was proved in Corollary \ref{cor:Enriques hyperellpitic uniqueness}, Theorem \ref{thm:minimal main} (3), Corollary \ref{cor:uniqueness on blowup} (2) and \cite[Theorem 3.18]{Li08}. Case (2) follows from Corollary \ref{cor:rigidity}. The finiteness result is Corollary \ref{cor:general type uniqueness}. 
\end{proof}

\begin{proof}[Proof of Theorem \ref{thm:main3}]
    The finiteness in (1) is part of Corollary \ref{cor:general type deformation finite}. Case (2) follows from Corollary \ref{cor:blowup connectedness}, the proof of Corollary \ref{cor:Enriques hyperellpitic uniqueness} and \cite[Theorem 3.5]{Li08}. Case (3) is part of Corollary \ref{cor:general type deformation finite}. 
\end{proof}

\begin{proof}[Proof of Theorem \ref{thm:main4}]
    Case (1) was proved in Theorem \ref{thm:b+=1 main} (2). Case (2) was proved in Theorem \ref{thm:minimal main} (2) and \ref{thm:blowup openness}. For the non-openness statement, notice that by Corollary \ref{cor:rigidity}, $\mathcal{K}_{\text{Int}}^c(X)$ must be contained in the subspace $H^{1,1}_J(X;\RR)\subseteq H^2(X;\RR)$ for any complex structure $J$ on $X$. If $b_2^+(X)>1$, this subspace is proper. It follows that the integrable compatible cone cannot be open in $H^2(X;\RR)$, while the symplectic cone does. Therefore, $\mathcal{SK}(X)$ cannot be open in $\cS(X)$.
\end{proof}

%\section{Higher dimensions}
%$T^{2n}$ and \cite{Wilson},\cite{Wilson97}, McDuff and Ruan

\bibliographystyle{amsalpha}
	\bibliography{main}{}

\end{document}